\def\final{1}
\titleformat{\subsubsection}[runin]
{\normalfont\normalsize\bfseries}{\thesubsubsection.}{1 ex}{}
\newcommand{\mynote}[1]{\marginpar{\tiny\sf #1}}
\newcommand{\mynote}[1]{}
\declaretheorem{theorem}
\declaretheorem[sibling=theorem,name=Corollary]{cor}
\declaretheorem[within=section]{lemma}
\declaretheorem[sibling=lemma,name=Proposition]{prop}
\declaretheorem[style=remark,sibling=lemma]{remark}
\declaretheorem[style=remark]{definition}
\declaretheorem[name=Problem]{problem}
\declaretheorem[name=Question]{question}
\newcommand{\figref}[1]{Figure \ref{fig:#1}}
\newcommand{\lemref}[1]{Lemma \ref{lemma:#1}}
\newcommand{\corref}[1]{Corollary \ref{cor:#1}}
\newcommand{\propref}[1]{Proposition \ref{prop:#1}}
\newcommand{\theoref}[1]{Theorem \ref{theo:#1}}
\newcommand{\theorefX}[1]{\ref{theo:#1}}
\newcommand{\secref}[1]{Section \ref{sec:#1}}
\renewcommand{\eqref}[1]{({\ref{eq:#1}})}
\newcommand{\lemlab}[1]{\label{lemma:#1}}
\newcommand{\proplab}[1]{\label{prop:#1}}
\newcommand{\theolab}[1]{\label{theo:#1}}
\newcommand{\corlab}[1]{\label{cor:#1}}
\newcommand{\seclab}[1]{\label{sec:#1}}
\newcommand{\eqlab}[1]{\label{eq:#1}}
\renewcommand{\vec}[1]{\mathbf{#1}}
\newcommand{\elltilde}{\tilde{\bm{\ell}}}
\newcommand{\bgamma}{\bm{\gamma}}
\newcommand{\bomega}{\bm{\omega}}
\newcommand{\Euc}{\operatorname{Euc}}
\newcommand{\Id}{\operatorname{Id}}
\newcommand{\rk}{\operatorname{rk}}
\newcommand{\Mat}{\operatorname{Mat}}
\newcommand{\pr}{\operatorname{pr}}
\newcommand{\F}{\mathbb{F}}
\newcommand{\eop}{\hfill$\qed$}
\newcommand{\mc}{\mathcal}
\newcommand{\pcross}{\uparrow}
\newcommand{\ncross}{\downarrow}
\newcommand{\Gamd}[1]{$\Gamma$-$(#1, #1)$}
\newcommand{\tdeg}{\widetilde{\deg}}
\newcommand{\C}{\mathbb{C}}
\newcommand{\R}{\mathbb{R}}
\newcommand{\Q}{\mathbb{Q}}
\newcommand{\Z}{\mathbb{Z}}
\renewcommand{\div}{\operatorname{div}}
\DeclareMathOperator{\Galois}{Gal}
\DeclareMathOperator{\HH}{H}
\DeclareMathOperator{\tr}{tr}
\DeclareMathOperator{\Aut}{Aut}
\DeclareMathOperator{\Func}{Func}
\DeclareMathOperator{\Hom}{Hom}
\DeclareMathOperator{\SL}{SL}
\DeclareMathOperator{\GL}{GL}
\DeclareMathOperator{\ssl}{sl}
\DeclareMathOperator{\vol}{vol}
\begin{document}
\title{Ultrarigid periodic frameworks}
\author{Justin Malestein\thanks{Einstein Institute of Mathematics, Hebrew University, Jerusalem, \url{justinmalestein@math.huji.ac.il}}
\and Louis Theran\thanks{Aalto Science Institute and Department of Computer Science,
Aalto University, \url{louis.theran@aalto.fi}}}
\date{}
\maketitle
\begin{abstract}
\begin{normalsize}
We give an algebraic characterization of when a $d$-dimensional periodic framework has no
non-trivial, symmetry preserving, motion for \emph{any} choice of
periodicity lattice. Our condition is decidable, and we provide
a simple algorithm that does not require complicated algebraic
computations.  In dimension $d=2$, we give a combinatorial characterization in the
special case when the  the number of edge orbits is the minimum possible for ultrarigidity.
All our results apply to a fully flexible, fixed area, or fixed periodicity lattice.
\end{normalsize}
\end{abstract}

\section{Introduction} \seclab{intro}
A \emph{periodic framework} is an infinite structure in Euclidean $d$-space, made of fixed-length bars
connected by universal joints and
symmetric with respect to a lattice $\Gamma$.  To fully describe the model,
we need to describe the allowed motions. The Borcea-Streinu deformation
theory \cite{BS10}, by-now the standard in the mathematical
literature on periodic frameworks, allows precisely those motions which preserve the
lengths and  connectivity of the bars and symmetry with respect to $\Gamma$, but
not the geometric representation of $\Gamma$, which is allowed to deform continuously.
We give more detail shortly, in \secref{setup}, but want to call out here
the key features of \emph{forced symmetry}  and \emph{deformable lattice representation}.

For this setting there are good algebraic \cite{BS10} and, in dimension $2$,
combinatorial \cite{MT13} characterizations of rigidity and flexibility.
Simply dropping the symmetry forcing altogether is known to lead to quite complicated
behavior \cite{OP09}, and the tools from \cite{BS10,MT13} do not apply directly.
One alternative approach is to study the behavior when relaxing
the symmetry constraints along a decreasing sequence of sublattices.  In this paper, we
will consider the extreme case, characterizing the periodic frameworks that
are infinitesimally rigid and remain so when the symmetry constraint is relaxed to
\emph{any} sublattice.

\subsection{The basic setup and background}\seclab{setup}
A periodic framework is defined by the triple
$(\tilde{G},\varphi,\elltilde)$, where $\tilde{G}$ is an infinite graph, $\varphi: \Z^d \to \Aut(G)$ is a free
$\Z^d$-action with finite quotient, and
$\elltilde : E(\tilde{G})\to \R_{> 0}$ is a $\varphi$-equivariant function assigning a length to each edge.
A \emph{realization} $(\vec p,\vec L)$ of $(\tilde{G},\varphi,\elltilde)$ is given by a
function $\vec p : V(\tilde{G}) \to \R^d$ and a matrix $\vec L\in \R^{d\times d}$ such that $\vec p$
is equivariant with respect to the lattice generated by the columns of $\vec L$, i.e.,
$$\begin{array}{rcll}
||\vec p(j) - \vec p(i)||^2 & = &\elltilde(ij)^2 & \text{for all $ij\in E(\tilde{G})$} \\
\vec p(\varphi(\gamma)(i)) & = &\vec p(i) + \vec L\cdot \gamma & \text{for all $i\in V(\tilde{G})$ and $\gamma\in \Z^d$}
\end{array}$$
Realizations are denoted by $\tilde{G}(\vec p,\vec L)$.
The set of all realizations is denoted $\mathcal{R}(\tilde{G},\varphi,\elltilde)$, and the \emph{configuration space}
$\mathcal{C}(\tilde{G},\varphi,\elltilde) = \mathcal{R}(\tilde{G},\varphi,\elltilde)/\Euc(d)$ is then defined as the
quotient of the realization space by Euclidean isometries.  A realization  $\tilde{G}(\vec p,\vec L)$ is then \emph{rigid}
if it is isolated in the configuration space and otherwise \emph{flexible}. The realization $\tilde{G}(\vec p,\vec L)$
is \emph{infinitesimally rigid} if the tangent space at $(\vec p, \vec L)$ in $\mathcal{R}(\tilde{G},\varphi,\elltilde)$ is $d+1\choose2$-dimensional and otherwise
\emph{infinitesimally flexible}.

The essential results on this model from \cite{BS10}, which introduced it, are that: (i) the realization and
configuration spaces are \emph{finite-dimensional algebraic varieties}; (ii) generically, rigidity and
flexibility are determined completely by the absence or presence of a \emph{non-trivial infinitesimal flex}, which can
be tested for in polynomial time via linear algebra;
(iii) generic rigidity and flexibility are properties of the finite \emph{colored quotient graph} of
$(G,\bgamma)$, which is a finite directed graph, with its edges labeled by elements of $\Z^d$.
(See \secref{infmot} for the dictionary between infinite periodic graphs and colored graphs.)

In dimension two, \cite[Theorem A]{MT13}, gives a combinatorial characterization of generic periodic
rigidity, in terms of the colored quotient graph.  The characterization is a good one, in the sense that it is
decidable by polynomial-time, combinatorial algorithms.  For higher dimensions, as is also the case for
finite bar-joint frameworks, finding a similar combinatorial characterization is a notable open problem.

All of the above-mentioned results on periodic frameworks rely, in an essential way, on symmetry-forcing.
Simply dropping the symmetry requirements for the allowed motions leads to configuration spaces that are not
treatable via the techniques from \cite{BS10}.  Additionally, starting with
a rigid periodic framework $\tilde{G}(\vec p,\vec L)$ and relaxing the symmetry
constraint to \emph{any sublattice at all} produces a framework that is,
a priori, non-generic, and so we cannot naively apply the results of \cite{BS10,MT13} to it.

\subsection{Ultrarigidity}
We define a periodic framework $\tilde{G}(\vec p,\vec L)$ to be \emph{periodically ultrarigid}
(simply \emph{ultrarigid}, for short, since there is no chance of confusion)
if it is rigid and remains so after relaxing the symmetry constraint to \emph{any} sublattice.
This definition and terminology are from \cite{BS12}.
That not all infinitesimally rigid periodic frameworks are ultrarigid was observed in \cite{BS10}.
The question of which colored graphs are
generically ultrarigid was raised, for dimension $2$, in \cite{T11}\footnote{See, in particular, the slide
\url{http://www.fields.utoronto.ca/audio/11-12/wksp_symmetry/theran/index.html?42;large\#slideloc}, and
the discussion in  \cite[Section 19.5]{MT13}.}
under the name ``sublattice question''. A similar question for periodic frameworks in all dimensions
was raised in \cite[Question 8.2.7]{R11}.

For any sublattice $\Lambda < \Z^d$, one can compute an associated rigidity matrix whose kernel
is the space of infinitesimal motions periodic relative to $\Lambda$.
However, this does not provide a formulation that immediately provides a finite
certificate of infinitesimal ultrarigidity. One must, a priori, compute the rank of infinitely many
matrices. (A finite certificate of infinitesimal ``ultraflexibility''
is given simply by the rigidity matrix associated with a particular sublattice that yields a non-trivial
infinitesimal motion.)

\subsection{Results and roadmap}
Our main theorem is an effective algebraic characterization of infinitesimal ultrarigidity. To state it, we first recall
that a \emph{torsion point} in $(\C^\times)^d = (\C \setminus \{0\})^d$ is any point $\omega = (\zeta_1, \dots, \zeta_d) \in \C^d$ where $\zeta_1, \dots, \zeta_d$
are roots of unity. Equivalently, a torsion point is any point with finite order in the group $(\C^\times)^d$ where the group operation
is component-wise multiplication. Let $\vec 1 = (1, 1, \dots, 1)$

\begin{restatable}{theorem}{mainthm}\theolab{main}
Let $\tilde{G}(\vec p,\vec L)$ be an infinitesimally rigid periodic framework in dimension $d$, with
colored quotient graph $(G,\bgamma)$.  Then there is an explicit constant $N(d,G, \vec p, \vec L)$ depending only on
$d, G, \vec p, \vec L$, and a finite collection of polynomials $p_1, p_2,\ldots, p_k \in \mathbb{C}[x_1,x_2,\ldots,x_d]$
such that $\tilde{G}(\vec p,\vec L)$ is infinitesimally ultrarigid if and only if $\vec 1$ is the only torsion point
of order $\leq N(d,G, \vec p, \vec L)$ in the common solution set of $p_1, p_2,\ldots, p_k$. If $\vec p, \vec L$ are rational, we can replace
$c(d,G, \vec p, \vec L)$ with a constant $c(d,G)$ depending only on $d, G$.
\end{restatable}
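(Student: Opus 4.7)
The plan is to reformulate periodic ultrarigidity as a question about torsion points lying on an explicit algebraic subvariety of the torus $(\C^\times)^d$, and then invoke an effective form of Laurent's theorem (Manin--Mumford for algebraic tori) to bound the torsion orders that need to be checked.

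First I would set up a twisted rigidity matrix. Fix a finite-index sublattice $\Lambda < \Z^d$. The space of $\Lambda$-periodic infinitesimal motions of $\tilde{G}(\vec p,\vec L)$ is the kernel of a rigidity matrix $M_\Lambda$, and the natural action of the finite abelian group $\Z^d/\Lambda$ block-diagonalizes $M_\Lambda$ into isotypic components indexed by characters $\chi \in \widehat{\Z^d/\Lambda}$. Via inflation, such a character is evaluation at a torsion point $\bomega \in (\C^\times)^d$ whose order divides the exponent of $\Z^d/\Lambda$. The $\bomega$-component is the kernel of a matrix $M(\bomega)$ of size determined solely by the colored quotient graph $(G,\bgamma)$, whose entries are Laurent polynomials in the coordinates of $\bomega$ with coefficients rational in $(\vec p, \vec L)$. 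At $\bomega = \vec 1$ this matrix is the usual periodic rigidity matrix, whose rank is fixed by the hypothesis of infinitesimal rigidity; at every other torsion $\bomega$ the component contains no trivial Euclidean motion, so ultrarigidity is equivalent to $M(\bomega)$ having full column rank for every torsion $\bomega \neq \vec 1$.

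Next I would let $p_1,\ldots,p_k \in \C[x_1,\ldots,x_d]$ be the maximal minors of $M(\bomega)$ of the relevant size, after clearing monomial denominators so they are genuine polynomials. Their common zero set $V \subset (\C^\times)^d$ is precisely the locus where $M(\bomega)$ is rank-deficient, so ultrarigidity is equivalent to $\vec 1$ being the only torsion point of $V$. To turn this into a finite check, I would apply an effective Laurent theorem: the torsion points on a subvariety of $(\C^\times)^d$ form a finite union of torsion cosets of algebraic subgroups, and there is an explicit $N$, computable from the degrees and heights of the $p_i$, such that any non-trivial such torsion coset is witnessed by a torsion point of order at most $N$. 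Combining this with degree and height estimates on the rigidity minors gives the constant $N(d,G,\vec p,\vec L)$; in the rational case, clearing denominators of $(\vec p,\vec L)$ bounds the height purely in terms of the combinatorial input, yielding $N(d,G)$.

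The main obstacle I expect is the quantitative Diophantine input: one needs a version of effective Laurent whose dependence on degree and height is explicit enough to produce a closed-form $N$, and clean enough that in the rational case the height contribution collapses into a constant depending on $(d,G)$ alone. The representation-theoretic construction of $M(\bomega)$ and the verification that its rank deficiency at a torsion $\bomega$ translates into an actual $\Lambda$-periodic infinitesimal flex for some $\Lambda$ (and conversely) is largely bookkeeping on the colored quotient graph, but care is needed to confine every trivial Euclidean motion to the $\bomega = \vec 1$ component so that the ``only if'' direction does not pick up spurious rank drops.
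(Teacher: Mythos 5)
Your overall architecture matches the paper's: a twisted rigidity matrix obtained by block-diagonalizing along characters of $\Z^d/\Lambda$ (this is the paper's Theorem \theorefX{ultramatrixproj}, with the matrix $\hat S_{G,\vec d}$), the polynomials $p_i$ as its maximal minors, and a finiteness theorem for torsion points on the resulting subvariety of $(\C^\times)^d$. However, there is a genuine gap in how you handle the finiteness step, and it is exactly the gap the paper had to close by proving a new result.

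You propose to use an effective Laurent/Manin--Mumford theorem whose bound is ``computable from the degrees and \emph{heights} of the $p_i$.'' Results of that shape exist (Hindry, Bombieri--Zannier are cited in the paper), and they do give a constant $N(d,G,\vec p,\vec L)$, so the first half of the statement is fine. But your fix for the rational case --- that ``clearing denominators of $(\vec p,\vec L)$ bounds the height purely in terms of the combinatorial input, yielding $N(d,G)$'' --- is false: clearing denominators produces integer coordinates, but those integers (and hence the heights of the coefficients of the minors) can be arbitrarily large for a fixed combinatorial type $(d,G)$. Any height-dependent torsion bound will therefore still depend on $(\vec p,\vec L)$, not just on $(d,G)$. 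The paper sidesteps this by proving Theorem \theorefX{torsionpoints}, a new statement in which the torsion order bound depends only on the \emph{degrees} of the $p_i$ and the \emph{coefficient field} (e.g.\ $\Q$ in the rational case), with heights playing no role; the authors explicitly note they know of no literature result giving exactly this. The proof (Lemmas \lemrefX{deeptoshallow}--\lemrefX{deeptoshallowgeneral}) uses LLL lattice reduction to place a high-order torsion point inside a low-degree torsion curve, a Galois-orbit counting argument to lower-bound the number of intersection points, and Bezout to force the entire curve into the variety --- a strategy modeled on Liardet's theorem rather than on height machinery. To repair your argument you would need to replace the generic ``effective Laurent'' citation with a degree-and-field-only bound of this kind, or reproduce the paper's Liardet-style argument.
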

\theoref{main} follows directly from:
a characterization of infinitesimal ultrarigidity from which
the polynomials $p_i$ are derived (\theoref{ultramatrixproj} below);
and a general theorem regarding torsion points as common solutions
to polynomials of bounded degree (\theoref{torsionpoints} below).

\subsubsection{Infinitesimal ultrarigidity}
The polynomials $p_i$ are given by minors of a matrix $\hat S_{G,\vec d}$ with entries
in the group ring $\R[\Z^d]$ with the pattern
\begin{equation}\eqlab{ultra-matrix}
\bordermatrix{
&        &  i           &       & j                                 &           \cr
&  \dots &  \dots      &  \dots  &      \dots                       &   \dots   \cr
ij &  \dots & -\vec d_{ij} & \dots & \vec d_{ij} \otimes \gamma_{ij} & \dots     \cr
&  \dots &  \dots      &  \dots  &      \dots                       &   \dots
}
\end{equation}
where $\vec d_{ij} = \vec p_j - \vec p_i + \vec L\cdot \vec \gamma_{ij}$ is the edge vector
associated with the colored edge $ij\in E(G)$,  $\gamma_{ij}$ is viewed as an element
of the group ring, and $\otimes$ denotes component-wise multiplication.

We can view $\hat S_{G,\vec p,\vec L}$ as a matrix with monomial entries in
$\R[x_1, x_1^{-1}, \dots, x_d, x_d^{-1}]$ via the natural isomorphism
$\R[\Z^d] \to \R[x_1, x_1^{-1}, \dots, x_d, x_d^{-1}]$ defined by
$\gamma \mapsto x^\gamma := x_1^{\gamma_1} x_2^{\gamma_2} \cdots x_d^{\gamma_d}$ where
$\gamma_i$ are the components of $\gamma$. That this \emph{rigidity matrix}
captures the infinitesimal motions is the first part of the proof of \theoref{main},
and it follows from\footnote{Previous versions of this paper omitted the reference
to \cite{CSS14}, of which we were unaware.  We regret the error.}:
\begin{restatable}[name={\cite{CSS14,PowerRUM}}]{theorem}{ultramatrixproj}\theolab{ultramatrixproj}
Let $\tilde{G}(\vec p,\vec L)$ be an infinitesimally rigid periodic
framework in dimension $d$, with colored quotient graph $(G,\bgamma)$ on $n$ vertices.
Then, $\tilde{G}(\vec p,\vec L)$ is infinitesimally ultrarigid
if and only if for every torsion point $\bomega \neq \vec 1$, evaluating the entries of
$\hat S_{G,\vec d}$ at $\bomega$ results in a matrix of rank $dn$.
\end{restatable}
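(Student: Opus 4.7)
The plan is to prove the theorem via Fourier decomposition of the rigidity matrix of a finite cover of $G$ over the characters of a finite abelian quotient of $\Z^d$. For any finite-index sublattice $\Lambda \leq \Z^d$, the $\Lambda$-periodic infinitesimal motions of $\tilde{G}(\vec{p},\vec{L})$ are encoded by the ordinary periodic rigidity matrix of the $[\Z^d:\Lambda]$-fold colored cover $G_\Lambda$, treated as a periodic framework with lattice $\vec{L}\Lambda$. The finite group $\Z^d/\Lambda$ acts on $G_\Lambda$ by deck transformations, and consequently on this larger rigidity matrix, by permuting fibres over vertices and edges. The characters of $\Z^d/\Lambda$ are precisely the torsion points of $(\C^\times)^d$ whose orders divide the exponent of $\Z^d/\Lambda$, and as $\Lambda$ varies over all finite-index sublattices these characters range over every torsion point in $(\C^\times)^d$. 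Thus infinitesimal ultrarigidity translates into a condition over characters.

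The central computation is to identify, for each character $\chi$ corresponding to a torsion point $\bomega_\chi$, the $\chi$-isotypic block of the cover rigidity matrix with $\hat{S}_{G,\vec{d}}$ evaluated at $\bomega_\chi$. A velocity $\vec{V}: V(\tilde{G})\to\C^d$ lies in the $\chi$-component precisely when $\vec{V}(\gamma\cdot\tilde{i})=\chi(\gamma)\vec{V}(\tilde{i})$ for $\gamma\in\Z^d$; such a velocity is therefore determined by $n$ vectors $\vec{v}_i\in\C^d$. Substituting into the length-preservation equation for a lifted edge with color $\gamma_{ij}$ yields $\vec{d}_{ij}\cdot(\chi(\gamma_{ij})\vec{v}_j-\vec{v}_i)=0$, which is exactly row $ij$ of $\hat{S}_{G,\vec{d}}(\bomega_\chi)$ applied to the vector $(\vec{v}_i)_{i\in V(G)}$. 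Hence the space of $\Lambda$-periodic velocities decomposes as $\bigoplus_{\chi}\ker\hat{S}_{G,\vec{d}}(\bomega_\chi)$, the sum taken over characters of $\Z^d/\Lambda$.

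The theorem now follows in both directions. The trivial-character block contains the $\Z^d$-periodic velocities together with any deformation of the lattice $\vec{L}\Lambda$ (lattice motions are automatically $\Z^d$-equivariant, so they land in the $\chi=\vec{1}$ summand); by the hypothesis that $\tilde{G}(\vec{p},\vec{L})$ is infinitesimally rigid, this summand contributes only the $\binom{d+1}{2}$-dimensional space of Euclidean isometries. A non-trivial torsion point $\bomega$ with $\ker\hat{S}_{G,\vec{d}}(\bomega)\neq 0$ arises from a character of $\Z^d/\Lambda$ for $\Lambda = \ker\chi$ and produces a genuinely non-$\Z^d$-periodic, $\Lambda$-periodic infinitesimal motion, breaking ultrarigidity; conversely, if every non-trivial $\bomega$ gives a full-rank block, no sublattice admits extra motions. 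The main obstacle is careful bookkeeping in the decomposition step: one must verify that lattice deformations really do lie entirely in the trivial isotypic component (so that the $\chi\neq\vec{1}$ blocks, which have no lattice columns in $\hat{S}_{G,\vec{d}}$, truly capture the full $\chi$-isotypic kernel), pair complex-conjugate characters to recover real infinitesimal motions from the complex Fourier picture, and keep the dimension count straight so that the rank condition ``$dn$'' correctly expresses vanishing of each non-trivial $\chi$-kernel.
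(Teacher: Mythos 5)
Your proof is correct and takes essentially the same route as the paper: both decompose the space of $\Lambda$-periodic infinitesimal motions into isotypic components for the finite abelian group $\Z^d/\Lambda$, identify the non-trivial character blocks with $\hat S_{G,\vec d}$ evaluated at the corresponding torsion point (the paper phrases this via the isomorphism $\R[\Gamma/\Lambda]\cong\prod_k A_k$ in \lemref{repthy}, you phrase it as a Fourier/deck-transformation decomposition of the cover's rigidity matrix), and observe that lattice deformations lie entirely in the trivial block. The only cosmetic slip is your phrase ``whose orders divide the exponent of $\Z^d/\Lambda$'' --- the characters of $\Z^d/\Lambda$ are the torsion points $\bomega$ with $\bomega^\lambda=1$ for all $\lambda\in\Lambda$, not all torsion points of order dividing the exponent --- but this does not affect the argument since you only use that every torsion point arises from \emph{some} $\Lambda$.
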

In \secref{infmot} we provide a direct derivation of \theoref{ultramatrixproj}, since
this form of  $\hat S_{G,\vec d}$
gives exactly  the polynomials $p_i$ appearing in \theoref{main}.
However, one may deduce it from previous work as follows.  The rigidity matrix
$\hat S_{G,\vec d}$ is a simple, rank-preserving,
transformation, of a rigidity matrix from
\cite{PowerRUM}. The key difference between our setting and that of
\cite{PowerRUM} is that we do not start with the assumption that all infinitesimal
motions must fix the lattice representation. To bridge this gap, we can then
use \cite[Theorem 5.1]{CSS14}.  Translated to our terminology, \cite[Theorem 5.1]{CSS14}
says that if
$\tilde{G}(\vec p,\vec L)$ is not infinitesimally ultrarigid, then there is a sublattice
$\Lambda' < \Lambda$ such that there is a non-trivial infinitesimal motion
$(\vec v,\Id)$, periodic with respect to $\Lambda'$.
This brings the question back into the setting of \cite{PowerRUM}, and
\theoref{ultramatrixproj} follows.

\subsubsection{Torsion points}
\theoref{main} states that
checking finitely many possibilities is sufficient to ensure $\vec 1$ is the only torsion point in
the variety defined by the minors of the above rigidity matrix.  This is a consequence of a
more general result, which is a consequence of the more explicit \corref{esttwo}
in \secref{algo}.
\begin{restatable}{theorem}{torsionpoints}\theolab{torsionpoints}
For any collection of polynomials $p_1,\ldots,p_k\in \C[x_1^{\pm 1},\ldots,x_d^{\pm 1}]$,
there is a number $N_0$, depending only on the degrees of the $p_i$ and the coefficient field,
such that if $\vec 1$ is the only torsion point up to order $N_0$
in the common solution set of $p_1,\ldots,p_k$,
then $\vec 1$ is the only torsion point in the common solution
set of $p_1,\ldots,p_k$.
\end{restatable}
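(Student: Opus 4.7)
The plan is to prove the contrapositive: assuming the common solution set $V \subset (\C^\times)^d$ of $p_1, \ldots, p_k$ contains a torsion point different from $\vec 1$, produce a torsion point in $V$ of order bounded by an explicit $N_0$ depending only on $d$, $\max_i \deg p_i$, and the coefficient field $K$.

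The main input is the Manin--Mumford theorem for algebraic tori (Laurent's theorem): the Zariski closure of the set of torsion points in any subvariety $V \subset (\C^\times)^d$ is a finite union of translates $\bomega_i T_i$ of algebraic subtori $T_i$ by torsion points $\bomega_i$. The content of the present statement is the \emph{effective} form of this, namely that both the number of cosets and the orders of the translates $\bomega_i$ admit a bound depending only on $d$, $\max_i \deg p_i$, and $K$. Effective bounds of exactly this shape are classical (Bombieri--Zannier, David--Philippon, Amoroso--Viada); the explicit version needed here is isolated in \corref{esttwo} below and proved directly in \secref{algo} via a Galois-theoretic argument on the cyclotomic orbits of torsion points, with the dependence on $K$ entering through the index of $K \cap \Q(\zeta_n)$ inside $\Q(\zeta_n)$.

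Given such a decomposition, the conclusion is a short combinatorial extraction. A non-trivial torsion point of $V$ must lie in some coset $\bomega_i T_i \subset V$. If $T_i = \{\vec 1\}$, then $\bomega_i \neq \vec 1$ is itself the required bounded-order torsion point. If $\dim T_i \geq 1$, the subtorus $T_i$ contains torsion points of every order, in particular a point $\bzeta$ of some small order $m$; then $\bomega_i \bzeta \in V$ is a torsion point whose order divides the product of the orders of $\bomega_i$ and $\bzeta$, and since $T_i$ carries infinitely many torsion points one may always choose $\bzeta$ so that $\bomega_i \bzeta \neq \vec 1$. Taking $N_0$ to be the resulting bound finishes the proof. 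The single technical obstacle is the effective control on the orders of the $\bomega_i$, which is exactly the content of \corref{esttwo}.
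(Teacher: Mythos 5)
Your proposal does not actually prove the theorem; it reduces it to an effective bound on the orders of the torsion translates $\bomega_i$ in the Laurent decomposition, and then asserts that such a bound is ``classical'' and, separately, is ``isolated in \corref{esttwo}.'' Both claims are problematic. First, the paper's own discussion of related work makes clear that no known result gives exactly the bound needed: Hindry \cite{H88} requires the $p_i$ to be defined over a number field, and Bombieri--Zannier \cite{BZ95} produce bounds depending on the \emph{heights} of the coefficients, whereas \theoref{torsionpoints} demands a bound depending only on the degrees and the coefficient field (and in particular must hold for $p_i$ with transcendental coefficients, which is exactly what arises from generic frameworks). So deferring to the literature does not close the gap. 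Second, deferring to \corref{esttwo} is circular: that corollary \emph{is} the explicit form of the theorem you are asked to prove, and the entire technical content of the argument lives there.

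The paper also takes a genuinely different route from what you sketch. It does not invoke Laurent's theorem at all. Instead, \lemref{deeptoshallow} establishes a \emph{descent}: if $V(p)$ contains a torsion point $\bomega$ of order $N$ with $\phi(N)$ sufficiently large relative to $C_d \cdot \tdeg(p) \cdot N^{(d-1)/d}$, then $V(p)$ must contain a nontrivial torsion point of strictly smaller order. The mechanism is to construct, via \lemref{lemma:torsioncoset}, a one-dimensional torsion coset $Y = V_U(x^{\lambda_1}-\eta_1,\dots,x^{\lambda_{d-1}}-\eta_{d-1})$ through $\bomega$ whose defining exponent vectors $\lambda_i$ are short (by LLL on the lattice $\{\gamma : \bomega^\gamma = 1\}$) and whose translate $\eta_i$'s have small order $M < N$; then the $\Galois(\Q(\zeta_N)/\Q(\eta))$-orbit of $\bomega$ lies in $V(p)\cap Y$ and is too large for a Bezout bound (\lemref{BezoutinU}) unless $V(p) \supset Y$, from which a nontrivial torsion point of order at most $\max(M,2)$ is extracted. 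Iterating the descent gives the explicit $N_0$ of \corref{esttwo}. Your high-level plan (decompose, then extract) is a reasonable way to \emph{state} the reduction, but the descent argument via short torsion cosets and Galois orbits is precisely the missing piece, and your proposal supplies neither that construction nor a valid substitute for it.
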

A number of similar statements are known.  Hindry \cite[Theorem 1]{H88} gives
an effective upper bound on the minimal order of torsion points in the case where the
$p_i$ are defined over a number field.  Bombieri and Zannier \cite{BZ95} bound the
minimal order of torsion points in terms of degrees of the $p_i$
and the heights of coefficients.  We do not, however, know any result
that implies exactly the statement of \theoref{torsionpoints}.

\subsubsection{Algorithmic results}
In \secref{algo}, we provide an explicit $N_0$ suitable for \theoref{torsionpoints} which depends on the degrees and coefficient
fields of the $p_i$. Consequently, we obtain:
\begin{cor}\corlab{decidable}
Infinitesimal ultrarigidity is a decidable property.
\end{cor}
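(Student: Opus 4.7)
The plan is to deduce \corref{decidable} essentially as a read-off of \theoref{main} combined with the explicit bound provided later in \secref{algo}. I will spell out the input model, the algorithm, and why each step can be carried out effectively.

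First I would fix a model of computation: assume that the framework $\tilde{G}(\vec p,\vec L)$ is given by the colored quotient graph $(G,\bgamma)$ together with entries of $\vec p$ and $\vec L$ in a computable subfield $K\subseteq \C$ (e.g.\ $\Q$ or a finitely presented algebraic extension). From these data one builds the rigidity matrix $\hat S_{G,\vec d}$ over $K[x_1^{\pm 1},\ldots,x_d^{\pm 1}]$ entry-by-entry using the formula in \eqref{ultra-matrix}; this is a finite, purely symbolic computation. Checking whether $\tilde{G}(\vec p,\vec L)$ is infinitesimally rigid in the periodic sense is a single rank computation over $K$ (evaluate at $\vec 1$), which is decidable; if it fails, the framework is not ultrarigid and we are done. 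Otherwise, extract the polynomials $p_1,\ldots,p_k$ as the $dn\times dn$ minors of $\hat S_{G,\vec d}$.

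Next I would invoke \theoref{main}: $\tilde{G}(\vec p,\vec L)$ is infinitesimally ultrarigid iff $\vec 1$ is the only torsion point of order at most $N(d,G,\vec p,\vec L)$ in the common vanishing locus $V(p_1,\ldots,p_k)$. The constant $N(d,G,\vec p,\vec L)$ is produced explicitly by the results of \secref{algo} (namely \corref{esttwo}) from the degrees of the $p_i$ and from $K$, so it is computable from the input. The set $T_N$ of torsion points of order at most $N:=N(d,G,\vec p,\vec L)$ is finite, with $|T_N|\le N^d$, and each element of $T_N$ is a tuple of roots of unity whose minimal polynomials are known; in particular, $T_N$ can be enumerated effectively.

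Finally, for each $\bomega\in T_N\setminus\{\vec 1\}$ one evaluates every $p_i$ at $\bomega$. Each such evaluation lies in a finite algebraic extension of $K$, and testing equality with $0$ in a finitely generated algebraic extension of $\Q$ (or of $K$) is a standard decidable task. The algorithm answers ``ultrarigid'' iff no $\bomega\neq\vec 1$ in $T_N$ is a common zero of $p_1,\ldots,p_k$. Correctness is immediate from \theoref{main}, and termination follows from finiteness of $T_N$ and the effectivity of each step.

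The only real obstacle is the effectivity of the constant $N(d,G,\vec p,\vec L)$; once that is provided by \secref{algo}, the rest is bookkeeping. If one instead wanted to avoid exact algebraic arithmetic over extensions of $K$, an alternative is to compute a Gr\"obner basis of the ideal $\langle p_1,\ldots,p_k, x_1^{N!}-1,\ldots,x_d^{N!}-1\rangle$ in $K[x_1,\ldots,x_d]$ and test whether its vanishing set over $\overline{K}$ equals $\{\vec 1\}$; but enumerating $T_N$ directly is simpler and suffices for decidability.
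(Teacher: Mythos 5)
Your proposal is correct and takes essentially the same route as the paper: the corollary is deduced from \theoref{main} together with the explicit bound $N_0$ supplied by \corref{esttwo}, after which one enumerates the finitely many torsion points of order at most $N_0$ and tests whether the minors of $\hat S_{G,\vec d}$ vanish simultaneously at any of them. The paper packages the same observation into the algorithm of \secref{thealgorithm} (written out for rational coordinates, with a remark about number fields); it also notes in passing an alternative proof via known algorithms for computing torsion cosets in varieties, but the primary argument matches yours.
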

Apart from our own \theoref{torsionpoints}, this also follows from the combination of \theoref{ultramatrixproj} and
the existence of known algorithms computing the torsion cosets lying in an algebraic variety (e.g. \cite{AS12, Leroux}).
For periodic frameworks with rational coordinates, we give a more efficient
algorithm. Here $\| \cdot \|_1$ denotes the $L_1$-norm of a vector.
\begin{restatable}{theorem}{twodalgo}\theolab{twodalgo}
Let $\tilde{G}(\vec p,\vec L)$ be an infinitesimally rigid periodic framework with
$\vec p$ and $\vec L$ rational, and let $(G,\bgamma)$ be the associated colored graph with $n$ vertices and $m$ edges and
$D = \sum_{ij \in E(G)} \|\gamma_{ij}\|_1$.
There is an algorithm with running time polynomial in $m$, $n$, and $D$
that decides the infinitesimal ultrarigidity of $\tilde{G}(\vec p,\vec L)$.
\end{restatable}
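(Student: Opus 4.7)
The plan is to combine the algebraic characterization in \theoref{ultramatrixproj} with the explicit torsion-point bound that will be established in \secref{algo}, specialized to rational coefficients.

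By \theoref{ultramatrixproj}, $\tilde{G}(\vec p,\vec L)$ is infinitesimally ultrarigid if and only if $\hat S_{G,\vec d}(\bomega)$ has rank $dn$ for every torsion point $\bomega \neq \vec 1$; equivalently, $\vec 1$ is the only torsion point in the common vanishing locus of the $dn \times dn$ minors $p_1,\ldots,p_k$ of $\hat S_{G,\vec d}$. Since $\vec p$ and $\vec L$ are rational, these $p_i$ lie in $\Q[x_1^{\pm 1},\ldots,x_d^{\pm 1}]$, so we are in the regime where the coefficient-field dependence of the bound from \secref{algo} is harmless.

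Next, I would bound the Laurent degrees of the $p_i$ in terms of $D$. Each nonzero entry of $\hat S_{G,\vec d}$ is a scalar multiple of a single monomial $x^{\gamma_{ij}}$, so the $\ell^1$-range of the exponents appearing in that entry is at most $\|\gamma_{ij}\|_1$. A $dn\times dn$ minor is a signed sum of products of $dn$ such entries coming from distinct rows, so after clearing denominators by a common monomial its total degree is at most $D=\sum_{ij\in E(G)}\|\gamma_{ij}\|_1$. I would then invoke \corref{esttwo}, whose rational case produces a bound $N_0$ polynomial in $D$ (for fixed $d$) such that it suffices to verify that $\hat S_{G,\vec d}(\bomega)$ has rank $dn$ for every non-trivial torsion point $\bomega$ of order at most $N_0$.

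Finally I would enumerate and check: there are at most $N_0^d$ torsion points of order $\leq N_0$, and for each $\bomega$ of order $N \leq N_0$ the entries of $\hat S_{G,\vec d}(\bomega)$ lie in the cyclotomic field $\Q(\zeta_N)$, in which the rank of an $m\times dn$ matrix can be computed in time polynomial in $m$, $n$, and $N$ by standard linear algebra represented over the power basis of $\Q(\zeta_N)$. Summing yields total running time polynomial in $m$, $n$, and $N_0^d$, hence polynomial in $m$, $n$, and $D$. The algorithm returns \emph{ultrarigid} iff every evaluation achieves rank $dn$.

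The main obstacle is producing an $N_0$ that is genuinely polynomial in $D$, as opposed to merely effective; this is precisely the content of the rational part of \corref{esttwo} in \secref{algo}, and distinguishes \theoref{twodalgo} from the weaker decidability statement \corref{decidable}. Once that bound is in hand, the degree estimate from the structure of $\hat S_{G,\vec d}$, the enumeration of roots of unity of bounded order, and the cyclotomic rank computations are routine.
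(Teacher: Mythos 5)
Your proposal is correct and follows essentially the same route as the paper: reduce to rank checks at torsion points via \theoref{ultramatrixproj}, bound $\tdeg$ of the minors of $\hat S_{G,\vec d}$ by $D$, invoke the rational case of \corref{esttwo} (where $K\cap\Q_{ab}=\Q$, so $[K\cap\Q_{ab}:\Q]=1$) to get $N_0$ polynomial in $D$, and then enumerate torsion points of bounded order and compute ranks over the relevant cyclotomic fields. One small bookkeeping slip: the number of torsion points of order $\leq N_0$ is not bounded by $N_0^d$ but rather by roughly $\sum_{N\le N_0}N^d = O(N_0^{d+1})$, since points of order $N$ live in $(\mu_N)^d$ for each $N\le N_0$; the paper's own analysis in \secref{thealgorithm} tracks this as $\sigma_0(N)N^{d-1}$ tuples per $N$ after using Galois conjugacy to restrict $k_1$ to divisors of $N$. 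This does not affect the polynomial-in-$D$ conclusion, but it does change the exponent.
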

The algorithm is presented and analyzed in \secref{thealgorithm}.  The
algorithm is not polynomial time, because of the dependence on $D$, though
in many applications we will have $D = O(m)$.  Additionally, the
implied constants grow exponentially in the ambient dimension $d$ and
the exponents of $m$, $n$, and $D$ in the running time are $\Theta(d^2)$.

Note that a kind of finiteness result \cite[Corollary 6.1, 6.2]{CSS14} is proved by
Connelly--Shen--Smith. However, the results are considerably different,
and, e.g., are not suitable for producing an algorithm to check infinitesimal ultrarigidity.

\subsubsection{Combinatorial results}
For $d=2$ we are also able to give a combinatorial characterization in the special case where
the quotient $(G, \bgamma)$ is a graph on $n$ vertices and $m = 2n+1$ edges.  The families of $\Delta$-$(2,2)$ and
colored-Laman graphs appearing in the statement of \theoref{dtwocomb} come from \cite{MT13,MT13b}
and are defined in \secref{graphmats}.
\begin{restatable}{theorem}{dtwocomb}\theolab{dtwocomb}
Let $\tilde{G}(\vec p,\vec L)$ be a generic $2$-dimensional periodic framework with
associated colored graph $(G,\bgamma)$ on $n$ vertices and $m=2n+1$ edges.  Then $\tilde{G}(\vec p,\vec L)$
is infinitesimally ultrarigid if and only if $(G, \bgamma)$ is colored-Laman and $(G, \Psi(\bgamma))$ is $\Delta$-$(2,2)$ spanning for all finite cyclic
groups $\Delta$ and epimorphisms $\Psi: \Z^2 \to \Delta$. Moreover, it is sufficient to check a finite set of epimorphisms $\Psi$ which
depends only on $(G, \bgamma)$.
\end{restatable}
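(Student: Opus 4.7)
By \theoref{ultramatrixproj}, infinitesimal ultrarigidity of the infinitesimally rigid periodic framework $\tilde G(\vec p, \vec L)$ is equivalent to $\hat S_{G,\vec d}(\bomega)$ having full column rank $2n$ for every torsion point $\bomega \in (\C^\times)^2 \setminus \{\vec 1\}$. Under genericity of $(\vec p, \vec L)$, infinitesimal rigidity of the original framework is itself equivalent, by \cite{MT13}, to the colored-Laman condition on $(G, \bgamma)$ for the sparsity count $m = 2n+1$. This supplies the first of the two combinatorial conditions; the remainder of the argument translates the torsion-point rank conditions into the $\Delta$-$(2,2)$ spanning condition.

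Torsion points $\bomega = (\zeta_1, \zeta_2) \neq \vec 1$ are in bijection (up to the choice of generator of the cyclic image) with epimorphisms $\Psi_\bomega : \Z^2 \to \Delta := \langle \bomega\rangle \leq \C^\times$ given by $\Psi_\bomega(\gamma) = \bomega^\gamma$; every finite cyclic quotient of $\Z^2$ arises this way. Substituting $\vec x \mapsto \bomega$ converts the monomial entries of $\hat S_{G,\vec d}$ into scalars, and the resulting matrix has exactly the pattern of the rigidity matrix of the $\Delta$-symmetric framework on $(G, \Psi_\bomega(\bgamma))$, restricted to the non-trivial isotypic component under the $\Delta$-action. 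A representation-theoretic block-diagonalization of the full $\Delta$-symmetric rigidity matrix then identifies the rank of $\hat S_{G,\vec d}(\bomega)$ with the rank of this non-trivial block.

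By the combinatorial rigidity theorem for cyclically-symmetric frameworks in \cite{MT13b}, for generic $(\vec p, \vec L)$ full column rank $2n$ of the non-trivial block is equivalent to $(G, \Psi_\bomega(\bgamma))$ being $\Delta$-$(2,2)$ spanning: the count $m = 2n+1$ is one above the tight $\Delta$-$(2,2)$ count, so ``spanning''---existence of a $\Delta$-$(2,2)$ subgraph covering every vertex orbit---is the correct matroidal notion. Finiteness of the list of $\Psi$ that must be checked follows from \theoref{main}, which gives an effective order bound $N$ on the torsion points to inspect; under genericity $N$ depends only on $(G, \bgamma)$, and only finitely many $\Psi$ with $|\Delta|\leq N$ produce distinct labeled quotients.

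The most delicate step will be the second paragraph: verifying that $\hat S_{G,\vec d}(\bomega)$ genuinely represents the non-trivial $\Delta$-isotypic block of the $\Delta$-symmetric rigidity matrix of $(G, \Psi_\bomega(\bgamma))$, and that this block's rank is governed, at the count $m = 2n+1$ and under genericity, by the $\Delta$-$(2,2)$ spanning condition of \cite{MT13b}. Small-order cases (notably $|\Delta|=2$, where the cyclic matroidal count can shift) and the need for genericity of $(\vec p, \vec L)$ to propagate to genericity in every twisted evaluation simultaneously will each require careful bookkeeping.
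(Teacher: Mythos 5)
Your overall strategy is the right one --- translate ``full rank of $pr_{\bomega}(\hat S)$ at every nontrivial torsion point'' into ``$\Delta$-$(2,2)$ spanning for every finite cyclic quotient $\Psi:\Z^2\to\Delta$'' --- and your identification of $pr_{\bomega}(\hat S)$ with (a block of) a $\Delta$-symmetric rigidity matrix is a correct observation that the paper also makes.

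However, the step you flag as ``delicate'' and defer to ``careful bookkeeping'' is not bookkeeping; it is where a genuinely new idea enters, and your proposal as written leaves a real gap there. You assert that full rank $2n$ of the nontrivial block is, for generic $(\vec p,\vec L)$, equivalent to $\Delta$-$(2,2)$ spanning by the cone-framework theorem of \cite{MT13a,MT13b}. But that theorem concerns frameworks that are generic \emph{as $\Delta$-symmetric frameworks}, whereas a framework that is generic as a $\Z^2$-periodic framework is emphatically \emph{not} generic when viewed through the quotient $\Psi:\Z^2\to\Delta$ --- the extra $\Z^2$-periodicity imposes algebraic relations on the $\Delta$-orbit representatives. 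This is precisely the obstruction the paper names in the introduction: relaxing to a sublattice produces an a priori non-generic framework, so the forced-symmetry results cannot be applied naively. Your proof would need to show that, among the $\Delta$-symmetric placements imposed by a $\Z^2$-periodic framework, the twisted evaluation still attains the $\Delta$-generic rank, and nothing in your outline supplies that.

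The paper closes this gap with two ingredients you do not use. For necessity, no genericity is needed at all: one observes that $pr_{\bomega}(\hat S)$ is a specialization of the generic linear representation $\vec M_{N,2,2}(G,\Psi(\bgamma))$ of the \Gamd{2} matroid (\lemref{linrepgamd}, from \cite{TaniGain}), so full rank of the specialization forces full generic rank, hence $\Delta$-$(2,2)$ spanning. For sufficiency --- the direction where genericity actually bites --- the paper invokes the periodic direction network theorem (\propref{dirnet}, from \cite[Theorem B]{MT13}): choose the edge vectors $\vec d_{ij}$ algebraically independent, then \emph{realize them} (up to positive scalars) as a bona fide $\Z^2$-periodic framework, so that all twisted evaluations $pr_{\bomega}(\hat S')$ simultaneously coincide with generic instances of $\vec M_{N,2,2}(G,\Psi(\bgamma))$. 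This decouples the rank question from the geometry of $(\vec p,\vec L)$ and avoids the ``genericity does not propagate'' problem entirely. Your finiteness argument also needs sharpening: \theoref{main} gives a bound depending on $(d,G)$ only in the \emph{rational} case, so ``under genericity $N$ depends only on $(G,\bgamma)$'' needs the paper's argument that generic $\vec d_{ij}$ make the coefficient field a purely transcendental extension of $\Q$, hence $K\cap\Q_{ab}=\Q$, before \propref{simplealggeneral} yields a bound independent of the placement.
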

For the above theorem, \emph{generic} means that the coordinates of $\vec p(i)$ and $\vec L$ are
algebraically independent over $\Q$, for a choice
of vertex representatives $i \in V(\tilde G)$. Consequently, graphs satisfying the above combinatorial
conditions have a full measure set of ultrarigid frameworks. At present, we are unable to say whether the
set of infinitesimal ultrarigid frameworks
contains an open dense set of all periodic realizations. However, \theoref{main} implies that
among {\it rational} realizations,
the infinitesimally ultrarigid ones are the complement of a proper algebraic variety. We also remark
that it is unclear whether, even generically,
infinitesimal ultrarigidity must coincide with ultrarigidity. (It is untrue in the case of the
fixed lattice.) We discuss these issues in more detail in \secref{conc}.

\paragraph{Fixed lattice and fixed volume}
Aside from \theoref{dtwocomb}, all of the above theorems transfer straightforwardly to ultrarigidity in the context
of a fixed lattice (\emph{f.l.}, for short) or lattices of fixed volume (\emph{f.v.} in $d\ge 3$, or \emph{f.a.}
in $d=2$, for short).
Moreover, with a few additional lemmas we can also prove fixed-lattice and fixed-area analogues of
\theoref{dtwocomb}. The unit-area-Laman graphs and Ross graphs are defined below in \secref{graphmats}.
\begin{restatable}{theorem}{dtwocombflfa}\theolab{dtwocombflfa}
Let $\tilde{G}(\vec p,\vec L)$ be a generic $2$-dimensional periodic framework with
associated colored graph $(G,\bgamma)$ on $n$ vertices and $m=2n$ edges.  The
following are equivalent:
\begin{itemize}
\item[(i)] $\tilde{G}(\vec p,\vec L)$ is infinitesimally f.l. ultrarigid
\item[(ii)] $\tilde{G}(\vec p,\vec L)$ is infinitesimally f.a ultrarigid
\item[(iii)] $(G,\bgamma)$ is unit-area-Laman and $(G,\Psi(\bgamma))$ is $\Delta$-$(2,2)$ spanning for
all finite cyclic groups $\Delta$ and epimorphisms $\Psi : \Z^2 \to \Delta$.
\item[(iv)] $(G,\bgamma)$ is Ross-spanning and colored-Laman-sparse,
and $(G,\Psi(\bgamma))$ is $\Delta$-$(2,2)$ spanning for
all finite cyclic groups $\Delta$ and epimorphisms $\Psi : \Z^2 \to \Delta$.
\end{itemize}
\end{restatable}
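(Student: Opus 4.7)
The plan is to prove the four-way equivalence as a triangle of implications: (iii)$\Leftrightarrow$(iv) combinatorially, and the main equivalences (i)$\Leftrightarrow$(iii) and (ii)$\Leftrightarrow$(iii) by adapting the torsion-point framework of \theoref{ultramatrixproj} and the proof architecture of \theoref{dtwocomb} to the f.l. and f.a. rigidity matrices. The equivalence (i)$\Leftrightarrow$(ii) then follows by transitivity.

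For (iii)$\Leftrightarrow$(iv), I would argue matroidally. At $m = 2n$, unit-area-Laman on the one hand, and the conjunction of colored-Laman-sparseness with Ross-spanning on the other, describe the same matroidal structure on colored graphs: fixed-area generic rigidity decomposes as fixed-lattice generic rigidity (captured by a Ross-spanning subgraph on $2n-2$ edges) together with a single additional area-deformation constraint saturated by the two extra edges, while colored-Laman-sparseness rules out any local over-count. This is a purely combinatorial rephrasing following the matroid-theoretic approach of \cite{MT13,MT13b}.

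For the main implications, I would adapt \theoref{ultramatrixproj} to the f.l. and f.a. settings. In each case, the relevant rigidity matrix has the same vertex-column block as $\hat S_{G,\vec d}$, with either no lattice columns (fixed lattice) or a one-dimensional area-deformation column block (fixed area). Ultrarigidity is equivalent to this matrix having full rank at every torsion point $\bomega$, including $\bomega = \vec 1$. The $\bomega = \vec 1$ case reduces to ordinary generic f.l. or f.a. rigidity, which at $m = 2n$ corresponds to Ross-spanning or unit-area-Laman respectively, hence to condition (iii). For $\bomega \neq \vec 1$, the additional lattice or area columns vanish, and full rank of the rigidity matrix at $\bomega = \Psi(\vec 1)$ for an epimorphism $\Psi : \Z^2 \to \Delta$ is equivalent to $(G, \Psi(\bgamma))$ being $\Delta$-$(2,2)$ spanning, by the combinatorial characterization of generic rigidity on the $|\Delta|$-fold quotient cover from \cite{MT13}. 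Finiteness of the $\Psi$ that need to be tested follows from \theoref{torsionpoints} applied to the minors of the relevant rigidity matrix.

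I anticipate two main obstacles. First, verifying that the lattice-deformation columns vanish in the correct way at non-identity torsion points requires a careful computation in the group ring $\R[\Z^2]$, showing that the area- and shear-direction columns correspond exactly to the trivial representation of $\Z^2$ and therefore lie in the kernel of evaluation at any $\bomega \neq \vec 1$. Second, the combinatorial equivalence (iii)$\Leftrightarrow$(iv) requires showing that the matroid generated by Ross-spanning plus the one-dimensional area-deformation subspace coincides exactly with the unit-area-Laman matroid at $m = 2n$, which amounts to aligning the matroid ranks and sparsity inequalities from \cite{MT13b}.
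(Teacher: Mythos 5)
Your overall architecture is aligned with the paper's---split the combinatorial equivalence (iii)$\Leftrightarrow$(iv) from the two algebraic ones, and analyze ranks of group-ring rigidity matrices at torsion points---but there is a genuine gap in the direction (i)$\Rightarrow$(iii). You argue that the rank condition at $\bomega = \vec 1$ yields Ross-spanning and that the conditions at $\bomega \neq \vec 1$ yield the $\Delta$-$(2,2)$ spanning property, and that this lands you in (iii). But unit-area-Laman is strictly stronger than Ross-spanning plus $\Delta$-$(2,2)$ spanning: via \lemref{anotherchar}, it additionally requires colored-Laman-sparseness, equivalently that every $(2,3)$-circuit $G' \subset G$ satisfy $\rho(G') \neq 0$. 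Neither the rank at $\vec 1$ (which only certifies a Ross-spanning subgraph) nor $\Delta$-$(2,2)$ spanning for all $\Psi$ (which by \lemref{altform} controls $(2,2)$-circuits, not $(2,3)$-circuits) rules this out. The paper closes this gap with a separate argument: a valid color change (which by \lemref{colchangepresrank} preserves all ranks of $pr_{\bomega}(\hat{S})$) turns the colors on such a circuit to $0$, whereupon the $G'$-rows of $pr_{\bomega}(\hat{S})$ agree with $pr_{\vec 1}(\hat{S})$ for every $\bomega$ and are just the finite rigidity matrix of $G'(\hat{\vec p})$; Laman's theorem then produces a row dependency that destroys full rank at every torsion point. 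Your proposal has no analogue of this step, and without it the implication (i)$\Rightarrow$(iii) does not go through.

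A secondary imprecision: you describe the f.l.\ and f.a.\ matrices as having the same vertex-column block as $\hat{S}$ with lattice columns dropped or reduced to a one-dimensional area block, and claim ultrarigidity is equivalent to full rank ``at every torsion point $\bomega$, including $\bomega = \vec 1$.'' This conflates $S$ and $\hat{S}$. As \corref{fixlatmatrix} and \corref{fixvolmatrix} make precise, the lattice/area columns live only in the $\bomega = \vec 1$ isotypic component of the decomposition in \lemref{repthy}; for $\bomega \neq \vec 1$ the relevant matrix is always $\hat{S}$, which has no lattice columns, and the required rank at $\vec 1$ in the f.l.\ case is $dn - d = 2n-2$, not full. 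Your first anticipated ``obstacle'' (checking that lattice columns vanish at $\bomega \neq \vec 1$) is therefore not a separate computation; it is already built into the representation-theoretic splitting. Finally, your (iii)$\Leftrightarrow$(iv) step is gestured at rather than proved, but the sparsity-count argument of \lemref{UALandRP2} is short and your matroidal framing should be straightforward to make rigorous along those lines.
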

We note that unit-area-Laman graphs are always generically rigid in the fixed-lattice model.
The combinatorial conditions in \theoref{dtwocomb} are equivalent to ones that do not
reference any finite quotients of $\Gamma$ (see \lemref{altform} below).  This is useful
for computational purposes, and the conditions in Theorems \theorefX{dtwocomb} and
\theorefX{dtwocombflfa} are all checkable in polynomial time. \secref{combalg} gives the
algorithms.

\subsection{Motivations}
Infinite frameworks have been used as geometric models for crystalline
structures (e.g., \cite{SwainDove}) for quite some time.  A specific class of silicates, zeolites,
which exhibit flexibility \cite{SWTT06} has been studied via bar-joint framework models
quite a bit in the recent past \cite{R06,KDRT11}.  Studies from physics and engineering
have used a variety of ad-hoc deformation theories for infinite frameworks.

Of particular interest here are perhaps the recent study \cite{SSML12}
of the Kagome lattice, which observes the emergence of long range phonons in a particular
very symmetric realization, while observing that in other realizations, the floppy modes
that emerge appear to be determined by the lattice's topology.  The response letter \cite{V12}
points to the role of geometry in such special configurations.

\subsection{Other related work}
Our method is based on the representation theory of $\Z^d$.  The use of
representation theory to study frameworks originates, to our knowledge,
with \cite{FG00}.  For finite discrete subgroups of $\Euc(d)$, the
analog of ultrarigidity is ``incidental symmetry'' (see, e.g., \cite{S10,S10a,ST13}).

A nontrivial class of ultrarigid and f.a. ultrarigid examples constructed from periodic
pointed pseudo-triangulations are described in
\cite{BS15}\footnote{The reference \cite{BS15} has appeared as an extended abstract in \cite{BS14}.}. Some implications and related questions are discussed in \secref{conc}.

\subsection{Acknowledgements}
LT is supported by the European Research Council under the European Union’s Seventh Framework
Programme (FP7/2007-2013) / ERC grant agreement no 247029-SDModels. JM is supported by the European
Research Council under the European Union’s Seventh Framework Programme (FP7/2007-2013) /
ERC grant agreement no 226135.

\section{Rigidity matrices} \seclab{infmot}
In this section, we characterize infinitesimal ultrarigidity of periodic frameworks in terms of matrices.
Ultrarigidity turns out to be characterized in a natural way by an $\R[\Z^d]$-linear system and,
concretely, by one $\R[\Z^d]$ matrix and one real matrix. We now fix some dimension $d$ and let
$\Gamma = \Z^d$. In this section, we will write the group operation in  $\Gamma$ multiplicatively.

The matrices $\hat S$ appearing
below are essentially the matrices $\phi_C(z)$ defined by Power in \cite{PowerRUM} where the connection
to ultrarigidity is also made. We present a different derivation of them by starting from motions
periodic with respect to some finite index $\Lambda < \Z^d$ and using representation theory.
Moreover, in \cite{PowerRUM}, Power only discusses motions not deforming the lattice
representation or ``unit-cell'' while the derivation here starts without that assumption.
As discussed in the introduction, an alernative path is to reduce the more general
question to the setting of \cite{PowerRUM} via a result from \cite{CSS14}.

\paragraph{Colored quotient graph}
A periodic graph is a pair $(\tilde G, \varphi)$ where $G$ is an infinite graph and $\varphi: \Gamma \to \Aut(\tilde G)$ is a
free action of $\Gamma$ on $\tilde G$. We will assume that the number of vertex and edge orbits is finite.
Since $\Gamma$ acts freely, the quotient map $\tilde G \to \tilde G/\Gamma$ is a covering map, and the data $(\tilde G, \varphi)$ can be
encoded by $\tilde G/\Gamma$ and a representation $\pi_1(\tilde G/\Gamma, i) \to \Gamma$.
A more convenient encoding is via colors (or ``gains''). Let $G= \tilde G/\Gamma$, and choose some orientation of the edges.
For each vertex $i \in V(G)$, choose a representative vertex $\tilde i \in V(\tilde G)$ of the corresponding orbit. Given any
edge $ij$, there is a unique lift to $E(\tilde G)$ with head $\tilde i$; the tail is $\gamma_{ij} \cdot \tilde j$ for a unique $\gamma_{ij} \in \Gamma$,
and $\gamma_{ij}$ is the color for $ij$. In general, a $\Gamma$-colored graph $(G, \bgamma)$ (for arbitrary groups $\Gamma$) is a directed
graph with edges labelled by elements of $\Gamma$. (These are also known as ``gain graphs''.)

Using our choice of representatives, we can furthermore identify $V(G) \times \Gamma \cong V(\tilde G)$ via $(i, \gamma) \mapsto \gamma \cdot \tilde i$. For any edge $ij \in E(G)$,
there is a corresponding orbit of edges where $(i, \gamma)$ is connected to $(j, \gamma \gamma_{ij})$ for all $\gamma \in \Gamma$.

\subsection{Parameterizing periodic realizations}
A ($\Gamma$-periodic) realization of $(\tilde G, \varphi)$
is an equivariant pair $(\vec p, \vec L)$ of a function $\vec p: V(\tilde G) \to \R^d$ and a representation $\vec L: \Gamma \to \R^d$ where ``equivariant'' means
that $\vec p(\gamma \cdot i) = \vec p(i) + \vec L(\gamma)$. Using the free action, we will describe this in slightly different language, and
then give an alternate parameterization.

First, set $\mc X = \Func(\Gamma, \R)$ which has a natural (left/right\footnote{Since $\Gamma$ is abelian, there is no distinction between
left and right actions. These formalisms describing the infinitesimal motions should generalize to crystallographic groups, so we have endeavored
to rely as little as possible on this fact and to use formulas which generalize more easily.}) action, namely
$(\gamma \cdot f)(\gamma_0) = f(\gamma^{-1} \gamma_0)$. Then any (not necessarily periodic) realization of $(\tilde G, \varphi)$ is an
element $(\vec p, \vec L) \in (\mc X^d)^n \times \Hom(\Gamma, \R^d)$ where $\vec p = (\vec p_1, \dots, \vec p_n)$ and $\vec p_i(\gamma)$
is the position of vertex $(i, \gamma) \in V(\tilde G)$. We say that $(\vec p, \vec L) \in (\mc X^d)^n \times \Hom(\Gamma, \R^d)$
is a $\Lambda$-periodic realization if $\lambda^{-1} \vec p - \vec p = \vec L(\lambda)$ for all $\lambda \in \Lambda$ where we view $\vec L(\lambda)$
as a constant function in $\Func(\Gamma, \R^d) \cong \mc X^d$.

We obtain an alternative parameterization as follows. Let $\mc P = \mc X^{dn} \times \Hom(\Gamma,\R^d)$ and define a $\Gamma$-action as follows:
$\gamma \cdot (\vec q, \vec L) = (\gamma \cdot \vec q, \vec L)$. For any subgroup $\Lambda < \Gamma$, let $\mc P_{\Lambda}$ be the subspace of
$\Lambda$-invariant vectors. We define an $\R$-linear isomorphism $\Psi: \mc P \to \mc P$ as
$\Psi(\vec q, \vec L) = ( (\vec q_1 + \vec L, \vec q_2 + \vec L, \dots, \vec q_n + \vec L), \vec L)$. (Note that we can view
$\vec L \in \Hom(\Gamma, \R^d) \subset \Func(\Gamma, \R^d) \cong \mc X^d$.) It is straightforward to check that $\Psi(\mc P_\Lambda)$ is precisely the
space of $\Lambda$-periodic frameworks. We therefore call $(\vec q, \vec L) \in \mc P$ an \emph{alternative parameterization} of the
realization $\Psi(\vec q, \vec L)$. In the following, we will work almost exclusively with $(\vec q, \vec L)$.

\subsection{Length functions and differentials}
For any realization $(\vec p, \vec L)$ of $(\tilde G, \varphi)$ (not requiring any symmetry or periodicity), all lengths (squared) of edges corresponding
to $ij \in E(G)$ can be encoded in the function $\| \vec p_j \cdot \gamma_{ij}^{-1} - \vec p_i \|^2 \in \mc X$ where the value at $\gamma$ is
the squared-length of the edge going from $(i, \gamma)$ to $(j, \gamma \gamma_{ij})$. We therefore define a function $\ell_{ij}: \mc P \to \mc X$
where for $(\vec q, \vec L) \in \mc P$ with $(\vec p, \vec L) = \Psi(\vec q, \vec L)$, we set
$$\ell_{ij}(\vec q, \vec L) = \| \vec p_j \cdot \gamma_{ij}^{-1} - \vec p_i \|^2 = \| (\vec q_j + \vec L) \cdot \gamma_{ij}^{-1} - (\vec q_i + \vec L) \|^2
= \| \vec q_j \cdot \gamma_{ij}^{-1} - \vec q_i + \vec L(\gamma_{ij}) \|^2$$
(Here again, we view $\vec L(\gamma_{ij})$ as a constant function.) It is clear from definitions that $\ell_{ij}$ is $\Gamma$-equivariant
and thus $\ell_{ij}(\mc P_\Lambda) \subseteq \mc X_\Lambda$. Moreover, note that $\mc P_\Lambda, \mc X_\Lambda$ are
preserved by $\Gamma$ (since all $\Lambda < \Gamma$ are normal), so $\ell_{ij}$ is $\Gamma$-equivariant as a map $\mc P_\Lambda \to \mc X_\Lambda$.
We let $\ell: \mc P \to \mc X^m$ be the $m$-tuple of all length functions and set $\ell_\Lambda := \ell|_{\mc P_\Lambda} : \mc P_\Lambda \to \mc X_\Lambda^m$.

For any (alternatively parameterized) $\Lambda$-periodic configuration $(\vec q, \vec L)$, the $\Lambda$-periodic realization space is $\ell_\Lambda^{-1}(\ell_\Lambda(\vec q, \vec L))$
and the space of infinitesimal motions is the kernel of the differential $d \ell_\Lambda$. Thus, the problem of infinitesimal ultrarigidity is determining
when $(\vec q, \vec L) \in \mc P_\Gamma$ induces the minimal possible kernel of $d\ell_\Lambda$ at the point $(\vec q, \vec L)$ over all sublattices $\Lambda < \Gamma$.
Since $\mc P_\Lambda$ and $\mc X_\Lambda$ are finite dimensional linear spaces,
the tangent space at each point for both respectively is naturally isomorphic to $\mc P_\Lambda, \mc X_\Lambda$. Moreover,
the $ij$ coordinate of the differential $d\ell_{\Lambda}(\vec q, \vec L): \mc P_\Lambda \to \mc X_\Lambda^m$ applied to $(\vec v, \vec M)$ is computed to be
$$ \langle \vec v_j \cdot \gamma_{ij}^{-1} - \vec v_i + \vec M(\gamma_{ij}),  \vec q_j \cdot \gamma_{ij}^{-1} - \vec q_i + \vec L(\gamma_{ij}) \rangle.$$

\paragraph{Passing to group rings over finite groups:}
The above computation of $d\ell_{\Lambda}$ applies to any $\Lambda$-periodic realization.
If we know additionally that $(\vec q, \vec L) \in \mc P_\Gamma \subset \mc P_\Lambda$, we can say more. Since
$\ell_\Lambda$ is $\Gamma$-equivariant, the map on tangent bundles
$d\ell_\Lambda: T \mc P_\Lambda \to T \mc X_\Lambda^m$ is too, so for any $\gamma\in \Gamma$
$$  \gamma \cdot d\ell_\Lambda(\vec q, \vec L, \vec v, \vec M) =
d\ell_\Lambda( \gamma \cdot (\vec q, \vec L, \vec v, \vec M)) = d\ell_\Lambda( \gamma \cdot \vec q, \vec L, \gamma \cdot  \vec v, \vec M)) .$$
When $(\vec q, \vec L) \in \mc P_\Gamma$, we also have $ (\gamma \cdot \vec q, \vec L)  = ( \vec q, \vec L)$ and so
$d\ell_{\Lambda}(\vec q, \vec L): \mc P_\Lambda \to \mc X_\Lambda^m$ is $\Gamma$-equivariant.
This can also be verified via the formula for $d\ell_\Lambda$. Specifically, one must use the fact that
$ \vec q_j \cdot \gamma_{ij}^{-1} - \vec q_i + \vec L(\gamma_{ij})  =  \vec q_j - \vec q_i + \vec L(\gamma_{ij})$
is a constant function.

Note that the formula for $d\ell_\Lambda$ makes no reference to $\Lambda$. Thus, for $(\vec q, \vec L) \in \mc P_\Gamma$, we
define $R_{\vec q, \vec L}: \mc P \to \mc X^m$ as
$$ R_{\vec q, \vec L}(\vec v, \vec M) =  \langle \vec v_j \cdot \gamma_{ij}^{-1} - \vec v_i + \vec M(\gamma_{ij}),  \vec q_j - \vec q_i + \vec L(\gamma_{ij}) \rangle.$$
By definition, the map $R_{\vec q, \vec L}$ restricted to $\mc P_\Lambda$ is $d\ell_{\Lambda}(\vec q, \vec L)$,
and so we obtain directly:
\begin{lemma}\lemlab{Rgammasys}
The framework $G(\vec p, \vec L)$ is infinitesimally ultrarigid if and only if the dimension of $\displaystyle \ker(R_{\vec q, \vec L}) \cap \left(\bigcup_{\Lambda <_{f.i.} \Gamma} \mc P_\Lambda \right)$ is $d+1\choose2$ for $(\vec q, \vec L) = \Psi(\vec p, \vec L)$. \qed
\end{lemma}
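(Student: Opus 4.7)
The plan is to package the standard linear-algebra characterization of infinitesimal rigidity and apply it uniformly across the family of all finite-index sublattices. First I would unpack the definition of ultrarigidity: $G(\vec p, \vec L)$ is infinitesimally ultrarigid exactly when, for every finite-index sublattice $\Lambda < \Gamma$, the tangent space to the $\Lambda$-periodic realization space at $(\vec p, \vec L)$ has dimension $\binom{d+1}{2}$. In the $\mc P$-parameterization, that tangent space is $\ker d\ell_\Lambda(\vec q, \vec L)$, and by the identification just established in this section $d\ell_\Lambda(\vec q, \vec L)$ coincides with $R_{\vec q, \vec L}|_{\mc P_\Lambda}$. Consequently, $\ker d\ell_\Lambda(\vec q, \vec L) = \ker R_{\vec q, \vec L} \cap \mc P_\Lambda$, and ultrarigidity is equivalent to the condition that $\dim(\ker R_{\vec q, \vec L} \cap \mc P_\Lambda) = \binom{d+1}{2}$ for every finite-index $\Lambda$.

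Second, I would pass from the indexed family to the union appearing in the lemma. The family $\{\mc P_\Lambda : \Lambda <_{f.i.} \Gamma\}$ is upward-directed under reverse inclusion, since for any $\Lambda_1,\Lambda_2 <_{f.i.} \Gamma$ the intersection $\Lambda_1\cap \Lambda_2$ is also of finite index and contains the invariants of each. Hence $\bigcup_\Lambda \mc P_\Lambda$ is a genuine $\R$-subspace of $\mc P$, and so is
\[
\ker R_{\vec q, \vec L} \cap \bigcup_{\Lambda} \mc P_\Lambda \;=\; \bigcup_{\Lambda} \bigl(\ker R_{\vec q, \vec L} \cap \mc P_\Lambda\bigr),
\]
whose dimension equals the supremum $\sup_\Lambda \dim(\ker R_{\vec q, \vec L} \cap \mc P_\Lambda)$ along the directed system.

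Third, I would close using the trivial motions. The $\binom{d+1}{2}$-dimensional space of infinitesimal Euclidean isometries sits inside $\mc P_\Gamma \subseteq \mc P_\Lambda$ for every $\Lambda <_{f.i.} \Gamma$ and lies in $\ker R_{\vec q, \vec L}$, so every term of the supremum is at least $\binom{d+1}{2}$. The supremum thus equals $\binom{d+1}{2}$ if and only if each individual term does, which by the first step is exactly the definition of infinitesimal ultrarigidity. I do not foresee a genuine obstacle here: the lemma is bookkeeping once $d\ell_\Lambda(\vec q,\vec L) = R_{\vec q, \vec L}|_{\mc P_\Lambda}$ is in hand. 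The only point meriting a sentence of care is checking that the filtered union $\bigcup_\Lambda \mc P_\Lambda$ is a linear subspace, so that its "dimension" is the vector-space dimension and equals the supremum of the finite-dimensional sublattice terms.
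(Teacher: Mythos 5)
Your proof is correct and follows the same route the paper takes (the paper simply asserts the lemma "directly" after observing that $R_{\vec q,\vec L}|_{\mc P_\Lambda} = d\ell_\Lambda(\vec q,\vec L)$); you have merely filled in the bookkeeping—directedness of $\{\mc P_\Lambda\}$, the union being a subspace with dimension equal to the supremum, and the lower bound $\binom{d+1}{2}$ from the trivial motions sitting in $\mc P_\Gamma$—which is exactly what the paper leaves tacit.
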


Since $d\ell_{\Lambda}(\vec q, \vec L)$ is $\Gamma$-equivariant
and $\Lambda$ acts trivially on $\mc P_\Lambda, \mc X_\Lambda$, the map $R_{\vec q, \vec L}$ restricted to $\mc P_\Lambda$ is a map of $\R[\Gamma/\Lambda]$-modules.
We describe the map as follows. It is straightforward to
check that $\mc X_\Lambda \to \R[\Gamma/\Lambda]$ defined by
$$f \mapsto \frac{1}{[\Gamma:\Lambda]} \sum_{[\gamma] \in \Gamma/\Lambda} f(\gamma) [\gamma]$$
is an isomorphism of $\R[\Gamma/\Lambda]$-modules. This moreover induces an isomorphism $\mc P_\Lambda \cong \R[\Gamma/\Lambda] \times \Hom(\Gamma, \R^d)$
where $\Hom(\Gamma, \R^d)$ is taken to be $d^2$ copies of the trivial $\R[\Gamma/\Lambda]$-module. We define a pairing
$[-,-]: \R[\Gamma/\Lambda]^d \times \R[\Gamma/\Lambda]^d \to \R[\Gamma/\Lambda]$ as
$$[ (b_1, \dots, b_d), (c_1, \dots c_d)] = \sum_{k=1}^d b_k c_k.$$
We identify $\R^d \otimes \R[\Gamma/\Lambda] \cong \R[\Gamma/\Lambda]^d$ via $\vec (b_1, \dots, b_d) \otimes c \mapsto (b_1 c, \dots, b_d c)$.
For $(\vec q, \vec L) \in \mc P_\Gamma$, let $\vec d_{ij} = \vec q_j(0) - \vec q_i(0) + \vec L(\gamma_{ij}) \in \R^d$, and let
$\hat R_{\vec q, \vec L} : \R[\Gamma/\Lambda]^{dn} \times \Hom(\Gamma, \R^d) \to  \R[\Gamma/\Lambda]^m$ be the ($\R[\Gamma/\Lambda]$-linear)
map whose $ij$ coordinate is
$$[\vec w_j, \vec d_{ij} \otimes [\gamma_{ij}^{-1}]] - [\vec w_i, \vec d_{ij} \otimes 1] +
\langle \vec M(\gamma_{ij}), \vec d_{ij}\rangle \frac{1}{[\Gamma:\Lambda]}\sum_{[\gamma] \in \Gamma/\Lambda} [\gamma]$$
We remark that $\vec d_{ij}$ is also equal to $\vec p_j(0) - \vec p_i(0) + \vec L(\gamma_{ij})$ for $(\vec p, \vec L) = \psi(\vec q, \vec L)$.

\begin{lemma} \lemlab{tofingrpring}
Let $(\vec q, \vec L) \in \mc P_\Gamma$ and let $R_{\vec q, \vec L}, \hat R_{\vec q, \vec L}$ be defined as above. Then the following diagram commutes:
$$\begin{CD}
\mc P_\Lambda @>R_{\vec q, \vec L}>> \mc X_\Lambda^m \\
@VV\cong V                                               			@VV\cong V  		\\
\R[\Gamma/\Lambda]^{dn} \times \Hom(\Gamma, \R^d) @>\hat R_{\vec q, \vec L}>> \R[\Gamma/\Lambda]^m
\end{CD}$$
\end{lemma}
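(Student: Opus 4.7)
The plan is to unfold all four maps in the square and verify commutativity by applying each side to an arbitrary $(\vec v, \vec M) \in \mc P_\Lambda$, then comparing the results coordinate-wise in $\R[\Gamma/\Lambda]^m$. First I would use the formula for $R_{\vec q,\vec L}$ derived just above: the $ij$-coordinate of $R_{\vec q,\vec L}(\vec v,\vec M)$ is the function in $\mc X_\Lambda$ sending $\gamma_0$ to $\langle \vec v_j(\gamma_0\gamma_{ij}) - \vec v_i(\gamma_0) + \vec M(\gamma_{ij}),\, \vec d_{ij}\rangle$, and then I would push this through the averaging isomorphism $f \mapsto \frac{1}{[\Gamma:\Lambda]}\sum_{[\gamma]} f(\gamma)[\gamma]$, treating each of the three summands separately.

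The main step is a change of summation index. For the $\vec v_j$ piece, substituting $\gamma_1 = \gamma_0 \gamma_{ij}$ in $\frac{1}{[\Gamma:\Lambda]}\sum_{[\gamma_0]}\vec v_j(\gamma_0\gamma_{ij})[\gamma_0]$ converts it into $[\gamma_{ij}^{-1}]$ times the image $\vec w_j \in \R[\Gamma/\Lambda]^d$ of $\vec v_j$ under averaging; then $\R[\Gamma/\Lambda]$-bilinearity of the pairing $[-,-]$ rewrites the inner product with $\vec d_{ij}$ as $[\vec w_j,\, \vec d_{ij}\otimes [\gamma_{ij}^{-1}]]$. The $\vec v_i$ piece requires no change of variables and averages directly to $-[\vec w_i,\, \vec d_{ij}\otimes 1]$. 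The contribution $\vec M(\gamma_{ij})$ is constant in $\gamma_0$, so its image under averaging is $\langle \vec M(\gamma_{ij}),\vec d_{ij}\rangle \cdot \frac{1}{[\Gamma:\Lambda]}\sum_{[\gamma]}[\gamma]$. Adding the three terms reproduces the defining formula for $\hat R_{\vec q,\vec L}(\vec w,\vec M)$, which is exactly the content of the diagram.

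The only place to be careful, and the only plausible source of error, is the direction convention $(\gamma\cdot f)(\gamma_0) = f(\gamma^{-1}\gamma_0)$: this is what forces the translation of $\vec v_j$ by $\gamma_{ij}^{-1}$ on the top of the square to correspond to multiplication by $[\gamma_{ij}^{-1}]$ (rather than $[\gamma_{ij}]$) on the bottom after averaging. Once this sign is sanity-checked, the remainder of the verification is mechanical bookkeeping, and I do not anticipate any genuine obstacle.
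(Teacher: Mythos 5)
Your verification is correct, and it is precisely the computation that the paper leaves implicit, since the paper's entire proof is the one sentence ``This follows in a straightforward manner from the definitions.'' Your unwinding of the averaging isomorphism term by term (with the change of index $\gamma_1=\gamma_0\gamma_{ij}$ producing the factor $[\gamma_{ij}^{-1}]$, the $\vec v_i$ term averaging directly, and $\vec M(\gamma_{ij})$ contributing the idempotent $\frac{1}{[\Gamma:\Lambda]}\sum_{[\gamma]}[\gamma]$) is exactly what the paper is asserting is routine, and the sign convention you flagged is handled consistently.
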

\begin{proof}
This follows in a straightforward manner from the definitions.
\end{proof}

\paragraph{A few facts from finite representation theory:}
Let $\Delta = \Gamma/\Lambda$ which is a finite abelian group. The ring $\R[\Delta]$ can be identified, as an $\R$-algebra, with a finite direct product
$\R[\Delta] \cong \prod_{k=1}^t A_k$ where each $A_k$ is either $\R$ or $\C$. The corresponding projection $\R[\Delta] \to A_k$ must map
each $\Delta$ to some subgroup of $\C^\times$ generated by a root of unity. Moreover, all such homomorphisms $\Delta \to \C^\times$ (up to complex conjugation)
correspond to some $A_k$.
Since any homomorphism $\Delta \to \C^\times$ is induced by some map $\Gamma \to \C^\times$, for each $k$ there
is a $d$-tuple $\bomega_k = (\zeta_{k,1}, \dots, \zeta_{k,d})$ of roots of unity such that the projection $\R[\Delta] \to A_k$ maps
$[\gamma]$ to $\bomega_k^\gamma := \zeta_{k,1}^{\gamma_1} \cdots \zeta_{k,d}^{\gamma_d}$ where $\gamma_i$ is the
$i$th component of $\gamma$. For convenience, we assume the projection $\R[\Delta] \to A_1 = \R$ is the trivial one sending all $\delta \in \Delta$
to $1$. For any $N$, we can use the above to identify the modules $\R[\Delta]^N \cong \oplus_{k=1}^\ell A_k^N$. The following lemma is
an elementary consequence of the above discussion and representation theory.

\begin{lemma} \lemlab{repthy}
Let $R = \hat R_{\vec q, \vec L}$ for some $(\vec q, \vec L) \in \mc P_\Gamma$.
The map $\displaystyle \R[\Delta]^{dn} \times \Hom(\Gamma, \R^d) \overset{R}{\to} \R[\Delta]^m$ satisfies
\begin{itemize}
\item[(i)] $R(A_1^{dn} \times \Hom(\Gamma, \R)) \subseteq A_1^m$ and $R(A_k^{dn}) \subseteq A_k^m$ for $k \neq 1$
\item[(ii)] For $k \neq 1$, the map $ A_k^{dn} \to A_k^m$ is $\R[\Delta]$-linear and the $ij$ coordinate of $R(\vec w)$ for $\vec w \in A_k^{dn}$
is $$\bm{\omega_k}^{-\gamma_{ij}} \langle \vec d_{ij}, \vec w_j \rangle - \langle \vec d_{ij}, \vec w_i \rangle.$$
\item[(iii)] The map $ A_1^{dn} \times \Hom(\Gamma, \R^d) \to A_k^m$ is $\R[\Delta]$-linear and the $ij$ coordinate of $R(\vec w, \vec M)$ for
$(\vec w, \vec M) \in A_k^{dn} \times \Hom(\Gamma, \R^d)$
is $$ \langle \vec d_{ij}, \vec w_j \rangle - \langle \vec d_{ij}, \vec w_i \rangle + \langle \vec d_{ij}, \vec M(\gamma_{ij}) \rangle$$
\end{itemize}
\end{lemma}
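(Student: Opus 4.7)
The plan is to compute directly how the formula defining $\hat R_{\vec q, \vec L}$ decomposes under the $\R$-algebra isomorphism $\R[\Delta] \cong \prod_{k=1}^t A_k$, and then read off the three claimed properties. The whole argument is essentially bookkeeping, once a few standard identifications from the representation theory of $\Delta$ are in place.

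First I would record the images of the $\R[\Delta]$-elements appearing in the formula for $\hat R_{\vec q, \vec L}$. Under the projection $\R[\Delta] \to A_k$ which sends $[\gamma] \mapsto \bomega_k^\gamma$, the element $[\gamma_{ij}^{-1}]$ maps to $\bomega_k^{-\gamma_{ij}}$ and the unit $1 = [0]$ maps to $1$. By character orthogonality for the finite abelian group $\Delta$, the averaging element $e := \frac{1}{[\Gamma:\Lambda]}\sum_{[\gamma] \in \Gamma/\Lambda} [\gamma]$ is the primitive idempotent corresponding to the trivial character: it maps to $1$ in $A_1 = \R$ and to $0$ in every $A_k$ with $k \neq 1$.

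From these facts, (i) is immediate. The ``$\vec w$-part'' $[\vec w_j, \vec d_{ij} \otimes [\gamma_{ij}^{-1}]] - [\vec w_i, \vec d_{ij} \otimes 1]$ of $\hat R_{\vec q, \vec L}$ is $\R[\Delta]$-linear in $\vec w$, since both the pairing $[-,-]$ and the identification $\R^d \otimes \R[\Delta] \cong \R[\Delta]^d$ are $\R[\Delta]$-bilinear/linear. Consequently this part respects the direct sum decomposition $\R[\Delta]^{dn} \cong \bigoplus_k A_k^{dn}$ and carries $A_k^{dn}$ into $A_k^m$. The ``$\vec M$-part'' $\langle \vec M(\gamma_{ij}), \vec d_{ij}\rangle \cdot e$ is a scalar multiple of $e$, hence lies entirely in $A_1^m$ and contributes nothing to $A_k$ when $k \neq 1$.

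Finally, for (ii) and (iii) I would just project the formula componentwise. For $k \neq 1$ only the $\vec w$-term survives, and expanding the pairing yields the claimed coordinate $\bomega_k^{-\gamma_{ij}}\langle \vec d_{ij}, \vec w_j\rangle - \langle \vec d_{ij}, \vec w_i\rangle$. For $k = 1$ one has $\bomega_1 = \vec 1$ and $e \mapsto 1$, so all three terms survive and reproduce the formula in (iii). The only non-trivial observation is identifying $e$ as the trivial idempotent; after that, everything is routine.
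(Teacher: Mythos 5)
Your proof is correct. The paper does not actually supply an argument for this lemma (it just asserts it is ``an elementary consequence of the above discussion and representation theory''), and your computation---in particular the identification of the averaging element as the idempotent for the trivial character, hence mapping to $1$ in $A_1$ and to $0$ in every $A_k$ with $k\neq 1$---is exactly the ``elementary consequence'' the authors have in mind.
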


The above lemma tells us that determining infinitesimal ultrarigidity reduces to analyzing two matrices. One matrix is the real
$m \times (dn + d^2)$ matrix, denoted by $S = S_{G, \vec d}$, which, given a colored graph $(G, \bgamma)$
and edge directions $\vec d_{ij}$, has rows given by
$$\begin{array}{cccccc} & i & & j& & \vec M  \\
( \dots & - \vec d_{ij}  & \dots & \vec d_{ij}  & \dots & \gamma_{ij,1} \vec d_{ij}  \; \dots \; \gamma_{ij,d} \vec d_{ij}  ) \end{array}$$
This is the rigidity matrix for periodic rigidity as in \cite{BS10,MT13}. The new data is the matrix with $\R[\Gamma]$ entries,
denoted by $\hat{S} = \hat{S}_{G, \vec d}$, which, given a colored graph $(G, \bgamma)$
and edge directions $\vec d_{ij}$, has rows of the form:
$$\begin{array}{ccccc} & i & & j&  \\
( \dots & - \vec d_{ij}  & \dots & \vec d_{ij} \otimes \gamma_{ij}^{-1} & \dots ) \end{array}$$
For any $\bomega \in \C^d$ which is a $d$-tuple of roots of unity,
there is a unique surjective homomorphism $pr_{\bomega}: \R[\Gamma] \to \F_{\bomega}$ satisfying $pr_{\bomega}(\gamma) = \bomega^\gamma$
where $\F_{\bomega} = \R$ if $\bomega \in \R^d$ and $\F_{\bomega} = \C$ otherwise. For
a matrix with entries in $\R[\Gamma]$, we can apply $pr_{\bomega}$ to each entry. We set $\vec 1 = (1, \dots, 1) \in \C^d$.

An an immediate corollary of \lemref{repthy}, we obtain:
\begin{cor} \corlab{rigiditymatrix}
Let $G(\vec p, \vec L)$ be a periodic framework and $\vec d_{ij}$ the edge vectors. It is infinitesimally
ultrarigid if and only if $S_{G, \vec d}$ has rank $dn + {d\choose2}$ and
$pr_{\bomega}(\hat{S}_{G, \vec d})$ has $\C$-rank $dn$ for all $\bomega \neq \vec 1$.
\end{cor}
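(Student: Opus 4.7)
The plan is to chain together the three lemmas already proved in this section. By \lemref{Rgammasys}, infinitesimal ultrarigidity of $G(\vec p,\vec L)$ is equivalent to $\displaystyle\ker R_{\vec q,\vec L}\cap\bigl(\bigcup_{\Lambda<_{f.i.}\Gamma}\mc P_\Lambda\bigr)$ having dimension $\binom{d+1}{2}$. Since the trivial motions (infinitesimal translations together with infinitesimal rotations of the lattice representation, which always lie in $\mc P_\Gamma\subseteq\mc P_\Lambda$) already contribute a $\binom{d+1}{2}$-dimensional subspace to every $\ker R_{\vec q,\vec L}|_{\mc P_\Lambda}$, ultrarigidity is equivalent to: for every finite-index $\Lambda<\Gamma$, the $\Lambda$-periodic kernel has dimension exactly $\binom{d+1}{2}$. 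Via \lemref{tofingrpring}, this translates to $\ker\hat R_{\vec q,\vec L}$ being $\binom{d+1}{2}$-dimensional, for every finite $\Delta=\Gamma/\Lambda$.

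Next, I would apply \lemref{repthy} to decompose the $\R[\Delta]$-linear map $\hat R_{\vec q,\vec L}$ according to the isotypic decomposition $\R[\Delta]\cong\prod_{k=1}^tA_k$. Part (i) of the lemma says that $\hat R_{\vec q,\vec L}$ splits as a direct sum of $\R$-linear maps, one on each factor: the $A_1$-block receives the lattice deformation part $\Hom(\Gamma,\R^d)$ and, by comparison of the formulas in (iii) with the definition of $S_{G,\vec d}$, this block \emph{is} the map $A_1^{dn}\times\Hom(\Gamma,\R^d)\to A_1^m$ given by $S_{G,\vec d}$. Each remaining block ($k\neq 1$), by (ii), is precisely the map $A_k^{dn}\to A_k^m$ with rows $-\vec d_{ij}$ in slot $i$ and $\bomega_k^{-\gamma_{ij}}\vec d_{ij}$ in slot $j$, i.e., $pr_{\bomega_k}(\hat S_{G,\vec d})$.

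Because the splitting is a direct sum, the kernel dimension condition separates across blocks. All $\binom{d+1}{2}$ trivial motions lie in the $A_1$-block, so the kernel constraint forces the $A_1$-block to have $\R$-kernel of dimension exactly $\binom{d+1}{2}$, equivalently $\rk_\R S_{G,\vec d}=dn+d^2-\binom{d+1}{2}=dn+\binom{d}{2}$, and forces every other block to have trivial kernel, equivalently $\rk_\C pr_{\bomega_k}(\hat S_{G,\vec d})=dn$. Finally, as $\Lambda$ ranges over all finite-index sublattices of $\Gamma$, the tuples $\bomega_k$ appearing in the decomposition of $\R[\Gamma/\Lambda]$ range over all torsion points $\bomega\neq\vec 1$ of $(\C^\times)^d$ (each torsion point has finite order and so factors through some $\Gamma/\Lambda$). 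Collecting these conditions gives exactly the statement of the corollary.

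The only delicate point I anticipate is the bookkeeping when $A_k=\C$: a pair of complex-conjugate characters of $\Delta$ is realized as a single $2$-dimensional real block isomorphic as an $\R[\Delta]$-module to $\C$, and one must verify that triviality of the $\R$-kernel on this block is equivalent to the $\C$-rank of $pr_{\bomega}(\hat S_{G,\vec d})$ being $dn$ for either representative $\bomega$ of the conjugate pair. This is immediate once one notes that the block map is $\C$-linear (as stated in \lemref{repthy}(ii)) and that complex conjugation swaps the kernels for $\bomega$ and $\bar\bomega$, so the two conditions collapse to a single rank statement per conjugate pair.
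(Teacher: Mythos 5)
Your proposal is correct and follows the same route the paper intends when it calls the corollary an ``immediate'' consequence of \lemref{repthy}: chain \lemref{Rgammasys}, \lemref{tofingrpring}, and \lemref{repthy}, split the kernel computation along the isotypic factors $A_k$ of $\R[\Gamma/\Lambda]$, identify the $A_1$-block with $S_{G,\vec d}$ and the $A_k$-blocks with $pr_{\bomega_k}(\hat S_{G,\vec d})$, count dimensions, and vary $\Lambda$ to exhaust all torsion points. The one point you rightly flag --- the $\R$-versus-$\C$ bookkeeping on the blocks $A_k\cong\C$ --- is exactly the small detail the paper leaves implicit, and your resolution via $\C$-linearity of the block map and invariance under conjugation is the correct one.
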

Since $S_{G,\vec d}$ having full rank verifies that $G(\vec p, \vec L)$ is infinitesimally rigid
as a periodic framework, we have proved:
\ultramatrixproj*

\paragraph{Substituting polynomials for colors in $S, \hat{S}$}
The ring $\R[\Gamma]$ is easily reinterpreted as a polynomial ring. There is a canonical isomophism $\R[\Gamma] \to \R[x_1^{\pm 1}, \dots, x_d^{\pm 1}]$
which maps $\gamma$ to $\vec x^\gamma := x_1^{\gamma_1} \cdots x_d^{\gamma_d}$. From this viewpoint, $pr_{\bomega}$ is equivalent to evaluating
the polynomial at the point $\bomega$. The matrix $S$ is unchanged and $\hat{S}$ becomes
$$\begin{array}{ccccc} & i & & j&  \\
( \dots & - \vec d_{ij}  & \dots & \vec d_{ij} \otimes \vec x^{-\gamma_{ij}} & \dots ) \end{array}$$

\subsection{Fixed-Lattice and Fixed-Volume Ultrarigidity}
It is easy to specialize the above discussion to get an algebraic criterion for a framework $G(\vec p, \vec L)$ to be {\it infinitesimally fixed-lattice ultrarigid}, i.e.
any $\Lambda$-respecting infinitesimal motions with $\vec M = 0$ are trivial. In this case, we can simply drop the columns for $\vec M$ from $S$
to obtain the right condition. In fact, we can simplify more since $pr_{\vec 1}(\hat{S})$ is precisely that matrix.
Note that since $\vec L$ is fixed, this forbids all trivial motions aside from translations. As alluded to above, the following statement, in slightly
different language, was proven previously by Power \cite{PowerRUM}.

\begin{cor} \corlab{fixlatmatrix}
Let $G(\vec p, \vec L)$ be a periodic framework with edge vectors $\vec d_{ij}$. It is infinitesimally f.l. ultrarigid if and only if
$pr_{\bomega}(\hat{S}_{G, \vec d})$ has $\C$-rank $dn$ for all $\bomega \neq \vec 1$ and $dn - d$ for $\bomega = \vec 1$.
\end{cor}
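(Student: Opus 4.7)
The plan is to mirror the derivation of Corollary \ref{cor:rigiditymatrix}, imposing the fixed-lattice constraint $\vec M = 0$ at every step. In the f.l.\ setting, the only trivial infinitesimal motions are the $d$-dimensional space of global translations, since rotations would deform $\vec L$.

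First I would state the f.l.\ analogue of Lemma \ref{lemma:Rgammasys}: $G(\vec p, \vec L)$ is infinitesimally f.l.\ ultrarigid if and only if
$$ \dim_\R \biggl( \ker(R_{\vec q, \vec L}) \cap \bigcup_{\Lambda <_{f.i.} \Gamma} \bigl(\mc P_\Lambda \cap \{\vec M = 0\}\bigr) \biggr) = d, $$
where $(\vec q, \vec L) = \Psi(\vec p, \vec L)$. This requires no new argument; one simply suppresses the $\vec M$-coordinate throughout and replaces ${d+1 \choose 2}$ by $d$. Next, I would pass through the isomorphism of Lemma \ref{lemma:tofingrpring}, dropping the $\Hom(\Gamma, \R^d)$ factor, to rephrase this as a condition on $\hat R_{\vec q, \vec L}$ restricted to $\R[\Gamma/\Lambda]^{dn}$. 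Lemma \ref{lemma:repthy} then decomposes this restricted map as a direct sum over the characters $\bomega_k$ of $\Delta = \Gamma/\Lambda$: for $\bomega_k \neq \vec 1$ the $k$-component is $pr_{\bomega_k}(\hat S_{G,\vec d}) : \C^{dn} \to \C^m$, and for $\bomega_k = \vec 1$ it is $pr_{\vec 1}(\hat S_{G,\vec d}) : \R^{dn} \to \R^m$, whose kernel always contains the $d$-dimensional space of constant translations $\vec w_i \equiv \vec c$.

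Finally, I would use that, as $\Lambda$ ranges over the finite-index sublattices of $\Gamma$, the non-trivial characters of $\Gamma/\Lambda$ range over exactly the non-identity torsion points of $(\C^\times)^d$. Combining this with the decomposition above yields the claim: the $\bomega \neq \vec 1$ blocks must each have full $\C$-rank $dn$, and the $\bomega = \vec 1$ block must have rank exactly $dn - d$, the latter encoding the absence of non-translational f.l.\ motions already at $\Lambda = \Gamma$. I do not expect any substantive obstacle: the only fine points are that $pr_{\vec 1}(\hat S_{G,\vec d})$ has the same real and complex rank (its entries are real) and that the $\vec M = 0$ adaptation of Lemma \ref{lemma:Rgammasys} is routine.
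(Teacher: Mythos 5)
Your proposal is correct and follows essentially the same route as the paper, which treats this corollary as an ``easy specialization'' of Corollary~\ref{cor:rigiditymatrix} and gives only a one-line sketch: drop the $\vec M$ columns from $S$, observe that the resulting matrix is exactly $pr_{\vec 1}(\hat S)$, and note that fixing $\vec L$ eliminates all trivial motions except the $d$-dimensional space of translations. You fill in the same chain of lemmas the paper implicitly invokes (the $\vec M = 0$ analogues of Lemmas~\ref{lemma:Rgammasys}, \ref{lemma:tofingrpring}, and \ref{lemma:repthy}), and the key observations — that translations live only in the $\bomega = \vec 1$ block, and that the non-identity characters of $\Gamma/\Lambda$ sweep out all non-identity torsion points as $\Lambda$ varies — are exactly what is needed.
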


A framework $G(\vec p, \vec L)$ is {\it infinitesimally fixed-volume ultrarigid} if any $\Lambda$-respecting infinitesimal motions where $\vec M$ does not
(infinitesimally) change the (co)volume of $\vec L(\Gamma)$ are trivial motions. Here, the volume of $\vec L(\Gamma) < \R^d$ is the volume of
$\R^d/\vec L(\Gamma)$ or equivalently $\det( \vec L(e_1) \dots \vec L(e_d))$ where $e_i$ are the standard basis vectors of $\Gamma = \Z^d$. For
f.v. ultrarigidity, we will require that $\vec L$ be full rank, or equivalently that $(\vec L(e_1) \dots \vec L(e_d))$ be invertible.

Of course, any $\vec L \in \Hom(\Gamma, \R^d)$ can be viewed as the matrix $\vec L = (\vec L(e_1) \dots \vec L(e_d)) \in \Mat_d(\R)$ and infinitesimal
motions $\vec M$ of $\vec L$ also lie in $\Mat_d(\R)$. Note that if $\vec L = \Id$, then the infinitesimal motions preserving volume are precisely
the vectors in the tangent space $T_{\Id}(\SL_d(\R))$ which is the lie algebra $\ssl_d(\R)$ of trace $0$ matrices. Thus,
for arbitrary invertible matrices $\vec L$, the infinitesimal motions $\vec M$ preserving volume are those satisfying $\tr(\vec L^{-1} \vec M) = 0$.

\begin{cor} \corlab{fixvolmatrix}
Let $G(\vec p, \vec L)$ be a periodic framework with edge vectors $\vec d_{ij}$. It is infinitesimally f.v. ultrarigid if and only if
the system defined by $S_{G, \vec d}$ and $\tr(\vec L^{-1}\vec M) = 0$ has rank $dn + {d\choose2}$ and  $pr_{\bomega}(\hat{S}_{G, \vec d})$ has $\C$-rank
$dn$ for all $\bomega \neq \vec 1$.
\end{cor}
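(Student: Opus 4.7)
The plan is to specialize the analysis leading to Corollary \corref{rigiditymatrix} to the volume-preserving setting, again decomposing via Lemma \lemref{repthy}. The structural observation that makes this clean is that $\vec M$ records the infinitesimal change of the $\Gamma$-invariant lattice $\vec L$, hence lives entirely in the trivial character summand $A_1^{dn} \times \Hom(\Gamma,\R^d)$ of the decomposition of $\mc P_\Lambda$. The linear condition $\tr(\vec L^{-1}\vec M)=0$ therefore constrains only that single block and does not touch any $A_k$ with $k\neq 1$.

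With this in hand, I would adapt Lemma \lemref{Rgammasys} by intersecting the kernel of $R_{\vec q,\vec L}$ with the subspace of volume-preserving motions before taking the union over sublattices $\Lambda<_{f.i.}\Gamma$; infinitesimal f.v.\ ultrarigidity is then equivalent to this resulting space having dimension $\binom{d+1}{2}$. By Lemma \lemref{repthy}, the kernel breaks up across characters $\bomega$, with the $\bomega\neq\vec 1$ blocks unaffected by the trace constraint. Those blocks therefore contribute no kernel exactly when $\pr_{\bomega}(\hat S_{G,\vec d})$ has $\C$-rank $dn$, matching that part of the statement verbatim.

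For the trivial character $\bomega=\vec 1$, the block is the real linear system consisting of $S_{G,\vec d}$ together with the row $\tr(\vec L^{-1}\vec M)=0$. I then need to verify that every trivial infinitesimal motion sits inside its kernel: translations contribute $\vec M=\mathbf 0$, and infinitesimal rotations contribute $\vec M=A\vec L$ with $A$ skew-symmetric, for which $\tr(\vec L^{-1}\vec M)=\tr(A)=0$; this step uses the invertibility of $\vec L$ that the corollary stipulates. Since these trivial motions span a $\binom{d+1}{2}$-dimensional subspace of the augmented kernel, the condition that no further volume-preserving $\Gamma$-periodic motions exist becomes $\binom{d+1}{2}=(dn+d^2)-\rk(\text{augmented system})$, i.e.\ rank equal to $dn+\binom{d}{2}$. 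The only delicate point I expect is the bookkeeping needed to confirm that appending the trace row interacts correctly with the $A_1$-summand identification in Lemma \lemref{repthy}; the rest is a direct rerun of the argument for Corollaries \corref{rigiditymatrix} and \corref{fixlatmatrix}.
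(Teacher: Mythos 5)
Your proposal is correct and matches the (implicit) argument in the paper: the text preceding the corollary derives the trace constraint $\tr(\vec L^{-1}\vec M)=0$ as the tangent-space description of volume-preserving lattice deformations, and the rank condition then follows exactly as you describe by rerunning \corref{rigiditymatrix} with the trace row appended to the trivial-character block, noting that $\vec M$ lives only in the $A_1$ summand so the $\bomega\neq\vec 1$ blocks are untouched. Your explicit check that translations ($\vec M=0$) and rotations ($\vec M=A\vec L$, $\tr(\vec L^{-1}A\vec L)=\tr(A)=0$) lie in the augmented kernel, and the resulting arithmetic $(dn+d^2)-\binom{d+1}{2}=dn+\binom{d}{2}$, are the bookkeeping the paper leaves to the reader.
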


\begin{remark}
One could alternatively view f.v. ultrarigidity as follows. For each $\Lambda$, we could allow those motions which preserve
the volume of $\vec L(\Lambda)$, not $\vec L(\Gamma)$. However, note that the volume of $\vec L(\Lambda)$ is always a constant multiple of $\vec L(\Gamma)$
as $\vec L$ varies over all possibilities (the multiple is the index), so the two notions are equivalent.
\end{remark}

\paragraph{Affine invariance}
In the cases of a fully flexible lattice or fixed lattice, the dimension of $\Lambda$-respecting motions remains under an affine transformation \cite{BS10}.
Particularly, if $A \in \GL_d(\R)$, then $G(\vec p, \vec L)$ and $G(A \cdot \vec p, A \circ \vec L)$ have the same dimension of $\Lambda$-respecting motions
where $(A \cdot \vec p_i)(\gamma) = A(\vec p_i(\gamma))$. The dimension of
motions is {\it not} preserved by affine transformations in the case of the fixed-volume
lattice. In fact, this failure is an integral part in establishing a Maxwell-Laman type theorem for fixed-area rigidity in dimension $2$ \cite{MT14}.

\subsection{Connection to the RUM spectrum}
Viewing $\hat S$ as a matrix with polynomial entries, we can consider the rank after evaluating $\vec x$ at any vector $\omega \in (\C^\times)^d$.
In \cite{PowerRUM}, Power defines the RUM (Rigid Unit Mode) spectrum of a framework $G(\vec p, \vec L)$ to be the subset of vectors $\vec k = (k_1, \dots, k_d) \in [0,1)^d$
such that the matrix $\hat S$ evaluated at $ \vec x = (\exp(2\pi i k_1), \dots, \exp(2\pi i k_d))$ has nontrivial kernel. Those points in the RUM spectrum with rational coordinates
(the rational RUM spectrum) correspond precisely to torsion points. The algorithm described in \secref{algo} thus determines when the rational RUM spectrum of
a framework is trivial.

The term rigid unit mode is also used to describe certain kinds of low-energy phonons of certain crystalline materials, which have been studied by
Dove et al \cite{Dove1}, Giddy et al \cite{Giddy1}, Hammonds et al \cite{Hammonds1, Hammonds2}, and Swainson and Dove \cite{SwainDove}. For the
precise connection between these two notions, we refer the reader to \cite[Section 6]{PowerRUM}.

\section{Algorithmic detection of infinitesimal rigidity}\seclab{algo}

In this section, we establish our algorithm for checking infinitesimal ultrarigidity in time polynomial in the degrees of the
minors. The key fact (\lemref{deeptoshallow}) to be proved is that if a polynomial has no torsion points up to a certain order
except $\vec 1$, then it has no
torsion points at all except $\vec 1$. The proof of this fact uses a few ideas from the proof of a theorem
of Liardet \cite{Lang, Liardet} which shows that if the variety of a polynomial of two variables has a torsion point of high order, then it contains an entire torsion coset.
As a consequence of our work below, we prove an analogue of this theorem for arbitrarily many variables with explicit estimates.

\subsection{Preliminary facts about lattices}

For a lattice $\Lambda \subseteq \R^d$, the {\it volume} of $\Lambda$, denoted $\vol(\Lambda)$ is the
volume of $\R^d/\Lambda$. This is also known as the {\it determinant} of $\Lambda$ since it is
the determinant of any $d \times d$ matrix whose columns are a basis of $\Lambda$.
If $\Lambda \subset \R^d$ is discrete but not a lattice, we set $\vol(\Lambda) = \vol(\R \cdot \Lambda/\Lambda)$.
The following theorem
of \cite{LLL82} implies that there is a basis of $\Lambda$ which is as ``small'' as its volume.
Let $\| \cdot \|_2$ denote the standard $L^2$-norm (i.e. Euclidean norm) on $\R^d$.
\begin{theorem}[name={\cite{LLL82}}] \label{theorem:smallbasis}
Let $\Lambda \subseteq \R^d$ be a lattice. There exists a basis $\lambda_1, \dots, \lambda_d$ of $\Lambda$
such that
$$\prod_{i=1}^d \| \lambda_i \|_2 \leq {\small \left(\frac{4}{3}\right)}^{d(d-1)/4} \vol(\Lambda)$$
\end{theorem}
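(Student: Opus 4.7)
The statement is the classical LLL basis reduction bound from \cite{LLL82}, so my plan is to outline the standard LLL argument. The idea is to start from an arbitrary $\Z$-basis $(b_1,\ldots,b_d)$ of $\Lambda$ and iteratively transform it via two elementary moves --- size reductions and adjacent swaps --- into a basis that satisfies a controlled comparison between the $\|b_i\|_2$ and the Gram--Schmidt lengths $\|b_i^\ast\|_2$. The volume identity $\prod_{i=1}^{d}\|b_i^\ast\|_2 = \vol(\Lambda)$ (which follows because Gram--Schmidt is a unipotent change of basis and therefore preserves the covolume) will then convert a per-index comparison into the claimed product bound.

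Concretely, I would introduce the Gram--Schmidt coefficients $\mu_{ij} = \langle b_i, b_j^\ast\rangle/\|b_j^\ast\|_2^2$ and declare the basis \emph{LLL-reduced} (with parameter $\delta = 3/4$) if (i) $|\mu_{ij}| \le 1/2$ for all $j<i$ and (ii) the Lov\'asz condition $\|b_i^\ast\|_2^{2} \ge (\delta - \mu_{i,i-1}^2)\|b_{i-1}^\ast\|_2^{2}$ holds for all $i$. Condition (i) is achieved by replacing $b_i$ with $b_i - \lfloor \mu_{ij}\rceil b_j$; condition (ii) is restored by swapping $b_{i-1}$ and $b_i$ whenever it fails. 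Termination of this procedure --- the one nontrivial algorithmic point, which I would only sketch --- is proved by showing that the potential $\prod_{i=1}^{d}\|b_i^\ast\|_2^{2(d-i+1)}$ is a positive integer multiple of a fixed denominator and strictly decreases by a factor of at least $\delta$ at every swap.

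Once the basis is LLL-reduced, conditions (i) and (ii) imply $\|b_i^\ast\|_2^{2} \ge (\delta - 1/4)\|b_{i-1}^\ast\|_2^{2} = (1/2)\|b_{i-1}^\ast\|_2^{2}$, and hence by induction
\[
\|b_j^\ast\|_2^{2} \;\le\; 2^{\,i-j}\,\|b_i^\ast\|_2^{2} \qquad \text{for all } j\le i.
\]
Expanding $b_i = b_i^\ast + \sum_{j<i}\mu_{ij}b_j^\ast$ in the orthogonal basis and using $|\mu_{ij}|\le 1/2$ then yields $\|b_i\|_2^{2} \le (4/3)^{\,i-1}\|b_i^\ast\|_2^{2}$ after a short calculation (the sharper constant $4/3$, rather than $2$, comes from iterating the Lov\'asz inequality in its full form rather than the weakened $1/2$-ratio). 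Taking the product over $i=1,\ldots,d$ and using $\sum_{i=1}^{d}(i-1) = d(d-1)/2$ gives
\[
\prod_{i=1}^{d}\|b_i\|_2 \;\le\; (4/3)^{d(d-1)/4}\prod_{i=1}^{d}\|b_i^\ast\|_2 \;=\; (4/3)^{d(d-1)/4}\,\vol(\Lambda),
\]
which is the claim.

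The main obstacle, and the part I would need to be most careful about, is the termination argument for the reduction procedure together with the precise constant in the per-index bound $\|b_i\|_2^{2} \le (4/3)^{i-1}\|b_i^\ast\|_2^{2}$: a naive use of the Lov\'asz inequality only gives a ratio of $2$, which yields a weaker exponent $2^{d(d-1)/4}$. Getting $(4/3)^{d(d-1)/4}$ requires keeping $\mu_{i,i-1}^2$ in the Lov\'asz bound throughout the recursion rather than discarding it, and this is where I would concentrate the technical work. Since this is exactly the bound proved in \cite{LLL82}, the cleanest presentation is simply to invoke that reference after setting up the framework above.
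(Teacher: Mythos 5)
The paper states this result as an external citation to \cite{LLL82} and gives no proof, so there is no internal argument to compare against; I will assess your sketch on its own terms.

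Your overall strategy (Gram--Schmidt orthogonalization, the volume identity $\prod_i\|b_i^*\|_2=\vol(\Lambda)$, a per-index comparison $\|b_i\|_2^2 \le c^{\,i-1}\|b_i^*\|_2^2$, and then taking the product) is the right one, and the final bookkeeping $\sum_{i=1}^d(i-1)/2 = d(d-1)/4$ is correct. However, there is an inconsistency in the middle that matters for the constant. With your stated parameter $\delta=3/4$, the Lov\'asz condition together with $|\mu_{i,i-1}|\le 1/2$ only yields $\|b_{i-1}^*\|_2^2 \le 2\,\|b_i^*\|_2^2$, no matter how carefully you ``iterate the Lov\'asz inequality in its full form''; the resulting per-index bound is $\|b_i\|_2^2\le \tfrac{2^i+2}{4}\|b_i^*\|_2^2\le 2^{\,i-1}\|b_i^*\|_2^2$, giving $2^{d(d-1)/4}$, not $(4/3)^{d(d-1)/4}$. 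To obtain the ratio $4/3$ you need the limiting parameter $\delta=1$: then $\|b_{i-1}^*\|_2^2\le (4/3)\|b_i^*\|_2^2$ and the geometric sum $1+\tfrac14\sum_{j=1}^{i-1}(4/3)^j=(4/3)^{\,i-1}$ gives exactly your claimed per-index inequality. But at $\delta=1$ the termination argument you sketch breaks down, because the potential $\prod_i\|b_i^*\|_2^{2(d-i+1)}$ is only non-increasing under swaps, not decreasing by a fixed factor. The standard fix for the existence statement is to observe that for each $i$ the relevant Gram--Schmidt norm $\|b_i^*\|_2$ is a minimum of norms over a discrete set, so the potential attains a minimum over all bases; a minimizing basis is necessarily ``$\delta=1$''-reduced, after which your computation goes through. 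Alternatively one can appeal to Hermite or Korkine--Zolotareff reduction directly. With that repair the proof is complete; as written, the $\delta=3/4$ setup cannot produce the constant in the theorem.
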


\begin{lemma} \lemlab{volgeq1}
Suppose $\{0\} \neq \Lambda$ is a subgroup of $\Z^d \subset \R^d$. Then, $\vol(\Lambda) \geq 1$.
\end{lemma}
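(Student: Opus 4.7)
The plan is to reduce the claim to a simple statement about determinants of integer matrices via the Gram matrix formula for the covolume of a lattice in a subspace.

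First I would let $r$ be the rank of $\Lambda$ (so $1 \le r \le d$) and choose a $\Z$-basis $\lambda_1, \ldots, \lambda_r \in \Z^d$ of $\Lambda$, which exists because subgroups of $\Z^d$ are free abelian of rank at most $d$. Let $M$ be the $r \times d$ integer matrix whose rows are the $\lambda_i$. By definition $\vec L \cdot \Lambda$ is an $r$-dimensional subspace of $\R^d$, and the covolume of $\Lambda$ inside this subspace is the standard quantity $\vol(\Lambda) = \sqrt{\det(MM^T)}$, where $MM^T$ is the Gram matrix with entries $\langle \lambda_i, \lambda_j \rangle$.

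Next I would observe two things. First, $MM^T$ has integer entries, since each $\lambda_i$ lies in $\Z^d$, so $\det(MM^T) \in \Z$. Second, $\det(MM^T) > 0$: by the Cauchy--Binet formula, $\det(MM^T) = \sum_{I} \det(M_I)^2$ where $I$ ranges over $r$-element subsets of columns and $M_I$ is the corresponding $r \times r$ minor. Since $\Lambda \neq \{0\}$ implies $r \ge 1$ and the $\lambda_i$ are linearly independent, at least one such minor is nonzero, so the sum is strictly positive.

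Combining these, $\det(MM^T)$ is a positive integer, hence $\det(MM^T) \ge 1$, and therefore $\vol(\Lambda) = \sqrt{\det(MM^T)} \ge 1$. There is no real obstacle here; the only thing to be a bit careful about is the definition of $\vol(\Lambda)$ when $\Lambda$ does not span $\R^d$, but the Gram matrix formula handles both cases uniformly.
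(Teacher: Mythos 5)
Your proof is correct, but it takes a genuinely different route from the paper. The paper argues by cases on $\rk(\Lambda)$: when $\Lambda$ has full rank it uses $\vol(\Lambda) = [\Z^d : \Lambda] \geq 1$, and when $\rk(\Lambda) = k < d$ it extends $\Lambda$ by $d-k$ standard basis vectors to a full-rank lattice $\Lambda'$ and then invokes the (Hadamard-type) inequality $\vol(\Lambda') \leq \vol(\Lambda) \prod_\ell \|e_{i_\ell}\|_2 = \vol(\Lambda)$ to reduce to the full-rank case. You instead treat both cases uniformly: pick a $\Z$-basis of $\Lambda$, form the $r\times d$ integer matrix $M$, observe that $\vol(\Lambda) = \sqrt{\det(MM^T)}$, and then note that $\det(MM^T)$ is a positive integer because $MM^T$ has integer entries and the Cauchy--Binet expansion $\det(MM^T) = \sum_I \det(M_I)^2$ has at least one nonzero term. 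This is arguably cleaner: it avoids both the case split and the need to know (or justify) the volume inequality for lattice extensions, at the mild cost of invoking Cauchy--Binet. The paper's approach has the small advantage of relying only on facts already used elsewhere in \secref{algo} (e.g. Hadamard's inequality appears again in the proof of Lemma \ref{lemma:torsioncoset}). One small slip: you wrote ``$\vec L \cdot \Lambda$'' where you clearly mean the real span $\R \cdot \Lambda$; this is just a stray notation carried over from the rigidity context and does not affect the argument.
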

\begin{proof}
If $\Lambda$ has rank $d$, then $\vol(\Lambda) = [\Z^d:\Lambda] \vol(\Z^d) = [\Z^d: \Lambda] \geq 1$. If
$\rk(\Lambda) = k < d$, then there is a subset $e_{i_1}, \dots, e_{i_{d-k}}$ of standard basis vectors such that
$\Lambda$ and $e_{i_1}, \dots, e_{i_{d-k}}$ generate a rank $d$ subgroup $\Lambda'$. We have
$$ 1 \leq \vol(\Lambda') \leq \vol(\Lambda) \prod_{\ell = 1}^{d-k} \|e_{i_\ell} \|_2 = \vol(\Lambda).$$
\end{proof}

\subsection{Some preliminaries on torsion points and torsion cosets}
We henceforth set $U = (\C^\times)^d \subset \C^d$.
For any point $\vec a = (a_1, \dots, a_d) \in U$ and integer point
$\lambda = (\lambda_1, \dots, \lambda_d) \in \Z^d$, we set
$$\vec a^\lambda := \prod_{i=1}^d a_i^{\lambda_i}.$$
Recall that $\bomega \in U$ is a torsion point if $\bomega = (\zeta_1, \dots, \zeta_d)$ where all $\zeta_i$ are roots of unity, i.e.
$\bomega$ is a finite order element in the multiplicative group $U$.
A {\it torsion coset} is a subvariety of $U$ of the form $V_U(x^{\lambda_i} - \eta_i)$ where the $\lambda_i$ generate
a direct summand of $\Z^d$ and $\eta_i$ are roots of unity.

\begin{lemma} \label{lemma:tomthroot}
Let $\Lambda' < \Lambda$ be subgroups of rank $k$ in $\Z^d$ and let $M = [\Lambda: \Lambda']$. If $\bomega^{\lambda'} = 1$
for all $\lambda' \in \Lambda'$, then $\bomega^{\lambda}$ is an $M$th root of unity for all $\lambda \in \Lambda$.
\end{lemma}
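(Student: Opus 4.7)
The proof will be a short direct argument relying on the fact that for a finite abelian quotient of order $M$, every element becomes trivial after multiplication by $M$.

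First I would observe that the hypothesis $\bomega^{\lambda'} = 1$ for all $\lambda' \in \Lambda'$ means that the homomorphism $\chi : \Lambda \to \C^\times$ defined by $\lambda \mapsto \bomega^\lambda$ factors through the finite quotient group $\Lambda/\Lambda'$, which has order $M$. By Lagrange's theorem applied to this quotient, $M \cdot \lambda \in \Lambda'$ for every $\lambda \in \Lambda$ (writing the group operation on $\Lambda$ additively as in the statement).

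Next, given any $\lambda \in \Lambda$, I would apply the hypothesis to the element $M\lambda \in \Lambda'$ to obtain
\[
(\bomega^{\lambda})^M \;=\; \bomega^{M\lambda} \;=\; 1,
\]
which shows that $\bomega^{\lambda}$ is an $M$th root of unity, as desired.

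There is no real obstacle here; the only care needed is to check that the multiplicative notation $\bomega^\lambda$ interacts with the additive group structure on $\Lambda \subset \Z^d$ in the expected way, i.e.\ that $\bomega^{\lambda_1 + \lambda_2} = \bomega^{\lambda_1}\bomega^{\lambda_2}$ and $\bomega^{M\lambda} = (\bomega^\lambda)^M$. These are immediate from the definition $\bomega^\lambda = \prod_i \zeta_i^{\lambda_i}$ given just before the lemma. No use of the rank hypothesis $\rk(\Lambda) = k$ is needed for the argument itself; it simply ensures $\Lambda/\Lambda'$ is a well-defined finite group of order $M$.
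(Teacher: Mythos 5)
Your proof is correct and matches the paper's argument: both observe that $M\lambda \in \Lambda'$ for every $\lambda \in \Lambda$ and conclude $(\bomega^\lambda)^M = \bomega^{M\lambda} = 1$. Your additional remarks about the homomorphism factoring through $\Lambda/\Lambda'$ and the role of the rank hypothesis are fine elaboration but the core step is identical.
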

\begin{proof}
For any $\lambda \in \Lambda$, we have $M \lambda \in \Lambda'$. Thus,
$(\bomega^\lambda)^M = \bomega^{M \lambda} = 1$.
\end{proof}

The ring of regular functions $\C(U)$ is  $\C[x_1^{\pm 1}, \dots, x_d^{\pm 1}]$. For any collection $q_1, \dots, q_k \in \C[x_1^{\pm 1}, \dots, x_d^{\pm 1}]$, we
denote the zero set in $U$ by $V_U(q_1, \dots, q_k)$.

\begin{lemma} \lemlab{irredtorcoset}
Let $\Lambda \subseteq \Z^d$ be a rank $k$ subgroup with generators  $\lambda_1, \dots \lambda_k$ and let $\eta_1, \dots, \eta_k$ be roots of unity.
If $\Lambda$ is a direct summand of $\Z^d$, then $V_U(x^{\lambda_i} - \eta_i : i\in [k])$ is an irreducible quasi-projective variety.
\end{lemma}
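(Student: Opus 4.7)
The plan is to reduce to the case where $\Lambda$ is the coordinate sublattice spanned by $e_1,\ldots,e_k$, where the statement is essentially trivial.

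Since $\Lambda$ is a direct summand of $\Z^d$, I can extend the basis $\lambda_1,\ldots,\lambda_k$ of $\Lambda$ to a full $\Z$-basis $\lambda_1,\ldots,\lambda_d$ of $\Z^d$. The change-of-basis matrix $A \in \GL_d(\Z)$ is unimodular, and so its action on exponents induces an algebraic group automorphism $\phi : U \to U$ of the torus via $\phi^*(x_i) = \vec x^{\lambda_i}$. Concretely, $\phi$ is a monomial Laurent-polynomial map whose inverse is given by the monomial map coming from $A^{-1}$ (which also has integer entries since $A$ is unimodular), so $\phi$ is an isomorphism of quasi-projective varieties from $U$ to itself.

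Under $\phi^*$, the defining equations $x^{\lambda_i} - \eta_i$ for $i \in [k]$ pull back exactly to $y_i - \eta_i$ in the new coordinates $y_1 = x^{\lambda_1},\ldots,y_d = x^{\lambda_d}$. Hence $\phi$ identifies $V_U(x^{\lambda_i}-\eta_i : i \in [k])$ with $V_U(y_i - \eta_i : i \in [k]) = \{\eta_1\}\times\cdots\times\{\eta_k\}\times (\C^\times)^{d-k}$. The latter is isomorphic to $(\C^\times)^{d-k}$, which is an open subvariety of affine space and therefore irreducible and quasi-projective. Pulling these properties back through $\phi$ yields the claim.

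The only real content to check is that the monomial map associated to a unimodular integer matrix is an isomorphism of $U$; this is routine, but it is the step where the hypothesis that $\Lambda$ is a direct summand is essential — without it, $A$ would only be an integer matrix of nonzero determinant, $\phi$ would fail to be surjective, and the fiber over the torsion coset would pick up spurious components coming from the kernel of the multiplication-by-$\det A$ map on $U$, which would destroy irreducibility.
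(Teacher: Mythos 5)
Your proof is correct and is essentially the same as the paper's: both extend $\lambda_1,\ldots,\lambda_k$ to a $\Z$-basis of $\Z^d$ (using the direct summand hypothesis), observe that the associated unimodular change of variables is an automorphism of $U$ (equivalently of the Laurent polynomial ring), and then identify the variety with the obviously irreducible $\{\eta_1\}\times\cdots\times\{\eta_k\}\times(\C^\times)^{d-k}$; the paper phrases the last step in terms of the ideal being prime while you phrase it geometrically, but the content is identical.
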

\begin{proof}
There exists a (non-unique) automorphism $\Z^d \to \Z^d$ mapping $\lambda_i \mapsto e_i$ where $e_i$ is the standard generator.
This induces an automorphism $\varphi$ of $\C[x_1^{\pm 1}, \dots, x_d^{\pm 1}]$ satisfying $\varphi(x^{\lambda_i}) = x_i$.
Thus, under $\varphi$, the ideal $(x^{\lambda_1} - \eta_1, \dots, x^{\lambda_k} - \eta_k)$ is the preimage of
$(x_1 - \eta_1, \dots, x_k - \eta_k)$ which is prime.
\end{proof}

\subsection{Bezout's inequality in affine space}
We recall the notion of degree from \cite{Heintz}. One particular advantage we will use is that degree is defined for any variety
without requiring knowledge of the defining polynomials. Note that Heintz defines degree for any ``constructible'' set, but
varieties will suffice for us.

\begin{definition}
Let $X \subset \C^d$ be an irreducible variety of dimension $r$. Then
$$\deg(X) = \sup\{  |E \cap X|  \;\; : \;\; E \text{ is a } (d-r) \text{-dimensional affine subspace such that } E \cap X \text{ is finite} \}$$
For $X$ reducible with components $X_1, \dots, X_c$,
$$\deg(X) = \sum_{i=1}^c \deg(X_i)$$
\end{definition}

We state some basic facts about degree.
\begin{itemize}
\item If $X = V(p)$, then $\deg(X) = \deg(p)$ \cite[Remark 2.(3)]{Heintz}.
\item If $X$ is finite then $\deg(X) = |X|$.
\end{itemize}

We can phrase Bezout's inequality as follows.

\begin{theorem}[name={\cite[Theorem 1]{Heintz}}]  \label{theorem:bezout}
Let $X, Y$ be subvarieties of $\C^d$. Then, $\deg(X \cap Y) \leq \deg(X) \cdot \deg(Y)$.
\end{theorem}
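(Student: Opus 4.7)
The statement is the classical affine Bezout inequality, which the paper attributes to Heintz, so my plan would be to sketch Heintz's proof strategy using generic linear sections and induction.

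First I would reduce to the irreducible case. Since the degree function of the previous definition is additive over irreducible components, if $X = \bigcup X_i$ and $Y = \bigcup Y_j$ are the irreducible decompositions, then the components of $X \cap Y$ are distributed among the intersections $X_i \cap Y_j$, giving
\[
\deg(X \cap Y) \leq \sum_{i,j} \deg(X_i \cap Y_j),
\]
so it suffices to prove the irreducible case and then sum, yielding $\deg(X)\deg(Y)$ on the right. This reduction is routine once one has additivity and the triangle-like inequality for intersections, which follow quickly from the generic-linear-section definition of degree.

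For the main step, assume $X, Y \subset \C^d$ are irreducible of dimensions $r$ and $s$. I would induct on $s$. The base case is $s = d - 1$, i.e.\ $Y = V(f)$ a hypersurface with $\deg(Y) = \deg(f)$. If $X \subset Y$ then $X \cap Y = X$ and the inequality is trivial, so assume $X \not\subset Y$. Here the key lemma is that restriction of $f$ to $X$ cuts out a subvariety all of whose components have codimension one in $X$, and whose degree is bounded by $(\deg X)(\deg f)$. To see this, one chooses a generic affine subspace $E$ of codimension $r - 1$ that meets $X \cap Y$ only at finitely many points meeting each top-dimensional component transversely; then $E \cap X$ is a curve of degree $\deg(X)$ inside $E$, and the zero set of $f$ on this curve has cardinality at most $(\deg X)(\deg f)$ by a univariate count along each of the $\deg(X)$ branches.

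For the inductive step with $\dim Y = s < d - 1$, I would pick a generic hyperplane $H$ containing a generic point of $X \cap Y$ so that $Y \cap H$ is pure of dimension $s - 1$ with $\deg(Y \cap H) = \deg(Y)$, and so that $H$ does not swallow any component of $X \cap Y$ whole. Then each component of $X \cap Y$ contributes a component of $X \cap (Y \cap H)$ of dimension one less, without lowering degree, so $\deg(X \cap Y) \leq \deg(X \cap (Y \cap H))$; applying induction to $Y \cap H$ gives the desired bound. The main obstacle is the genericity step: unlike in projective space, components of $X \cap Y$ in $\C^d$ may have larger than expected dimension, so one must verify that a single hyperplane $H$ can be chosen to behave correctly with respect to every irreducible component of $X \cap Y$ simultaneously. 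This is the technical heart of Heintz's argument, which he handles via a dimension count on the Chow-like variety of admissible hyperplanes.
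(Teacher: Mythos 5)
The paper does not prove this statement; it is cited verbatim from Heintz \cite[Theorem 1]{Heintz}, so there is no in-paper argument to compare against. Your sketch is therefore judged on its own merits, and it has a genuine gap in the inductive step. The quantity $\deg(\,\cdot\,)$ used here is the \emph{cumulative} degree, summed over irreducible components of every dimension, and the whole point of Heintz's affine/constructible Bezout inequality is to control contributions from excess-dimensional and isolated components. Your inductive step asserts that after cutting $Y$ by a generic hyperplane $H$, ``each component of $X\cap Y$ contributes a component of $X\cap(Y\cap H)$ of dimension one less, without lowering degree.'' This fails precisely for the zero-dimensional components of $X\cap Y$: a generic hyperplane misses a given finite set of points entirely, so these components and their contribution to $\deg(X\cap Y)$ disappear from $X\cap Y\cap H$, and the inequality $\deg(X\cap Y)\le\deg(X\cap(Y\cap H))$ is false in that situation. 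Zero-dimensional intersections of irreducible varieties of complementary (or non-complementary) dimension are ubiquitous, so this is not an edge case that can be waved away, and it is essentially \emph{the} case that Heintz's argument has to work hardest to handle. Demanding instead that $H$ pass through a chosen point of $X\cap Y$ destroys the genericity you also need; there is in general no single hyperplane that is simultaneously generic for the positive-dimensional components and incident to all the isolated ones, and appealing to ``a Chow-like variety of admissible hyperplanes'' does not resolve this tension.

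Two smaller points. First, in your hypersurface base case the claim that $f$ has at most $\deg(X)\cdot\deg(f)$ zeros on the curve $E\cap X$ is itself a curve-hypersurface Bezout statement; the ``univariate count along branches'' is not quite a proof, since the degree of the restriction $f|_{E\cap X}$ under a generic linear projection needs a separate argument (via resultants or the projective closure). That case is salvageable, but should not be treated as obvious. Second, your reduction to irreducibles is correct as stated, because every irreducible component of $X\cap Y$ is a component of some $X_i\cap Y_j$; it is worth noting that this is where additivity of the cumulative degree is used in an essential way.
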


We will apply this theorem to our particular situation of varieties in $U$. We define a kind of degree for polynomials in
$\C[x_1^{\pm 1}, \dots, x_d^{\pm 1}]$. We set
$$\tdeg(p) = \min_{\gamma \in \Z^d} (\deg( x^{\gamma} p) \;\; | \;\; x^{\gamma} p \in \C[x_1, \dots, x_d])$$
where $\deg$ on the right hand side is the usual degree of a polynomial.

For any $\lambda = (\ell_1, \dots, \ell_d) \in \Z^d$, let $\ell_i^+ = \ell_i$ if $\ell_i > 0$ and let $\ell_i^+ = 0$ otherwise. Let $\ell_i^- = -\ell_i$
if $\ell_i < 0$ and let $\ell_i^- = 0$ otherwise. Set $\lambda^+ = (\ell_1^+, \dots, \ell_d^+)$ and $\lambda^- = (\ell_1^-, \dots, \ell_d^-)$.
It follows that $\lambda^+$ and $\lambda^-$ have disjoint support and are nonnegative vectors, and that $\lambda = \lambda^+ - \lambda^-$.

\begin{lemma} \lemlab{BezoutinU}
Let $\lambda_1, \dots, \lambda_{d-1}$ generate a summand of $\Z^d$, and let $\eta_1, \dots, \eta_{d-1} \in \C$ be roots of unity.
Let $p \in \C[x_1^{\pm 1}, \dots, x_d^{\pm 1}]$, and set $q_i = x^{\lambda_i} - \eta_i$ for $1 \leq i \leq d-1$.
Then, either $V_U(p) \supset V_U(q_1, \dots, q_{d-1})$ or
$$| V_U(p) \cap V_U(q_1, \dots, q_{d-1})| \leq \tdeg(p) \cdot \prod_{i=1}^{d-1} \| \lambda_i \|_1.$$
\end{lemma}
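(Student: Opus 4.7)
The plan is to reduce the statement to a direct application of Bezout's inequality (Theorem \ref{theorem:bezout}) in the affine space $\C^d$, after clearing denominators to convert each Laurent polynomial into a polynomial without changing its zero set in $U$. First I would choose $\gamma \in \Z^d$ such that $\tilde p := x^\gamma p$ lies in $\C[x_1,\ldots,x_d]$ with $\deg \tilde p = \tdeg(p)$; since $x^\gamma$ does not vanish on $U$, we have $V_U(\tilde p) = V_U(p)$. Next, for each $i$, I would set
\[
\tilde q_i := x^{\lambda_i^-} q_i = x^{\lambda_i^+ + \lambda_i^-} - \eta_i\, x^{\lambda_i^-},
\]
which is an element of $\C[x_1,\ldots,x_d]$. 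Because $\lambda_i^+$ and $\lambda_i^-$ have disjoint support, the monomial $x^{\lambda_i^+ + \lambda_i^-}$ has total degree $\|\lambda_i^+\|_1 + \|\lambda_i^-\|_1 = \|\lambda_i\|_1$, and $\|\lambda_i^-\|_1 \leq \|\lambda_i\|_1$, so $\deg \tilde q_i \leq \|\lambda_i\|_1$. Again $V_U(\tilde q_i) = V_U(q_i)$ because $x^{\lambda_i^-}$ is invertible on $U$.

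Second, I would use the hypotheses to argue finiteness of the intersection in $U$. By \lemref{irredtorcoset}, the variety $Y := V_U(q_1, \ldots, q_{d-1})$ is irreducible, and since the $\lambda_i$ span a rank $(d-1)$ summand of $\Z^d$ the characters $x^{\lambda_i}$ cut $U$ down to a $1$-dimensional subvariety. Hence if $V_U(p) \not\supset Y$, then $V_U(p) \cap Y$ is a proper closed subset of the irreducible $1$-dimensional $Y$, so it is $0$-dimensional and therefore finite.

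Third, I would apply Bezout's inequality iteratively in $\C^d$ to the hypersurfaces $V_{\C^d}(\tilde p)$ and $V_{\C^d}(\tilde q_i)$ to obtain
\[
\deg\!\bigl(V_{\C^d}(\tilde p, \tilde q_1, \ldots, \tilde q_{d-1})\bigr) \;\leq\; \deg(\tilde p) \cdot \prod_{i=1}^{d-1} \deg(\tilde q_i) \;\leq\; \tdeg(p)\cdot \prod_{i=1}^{d-1} \|\lambda_i\|_1.
\]
Since $V_U(p) \cap Y$ equals the portion of this affine intersection that lies in $U$, and each of its (finitely many) isolated points contributes $1$ to the degree of the reducible variety $V_{\C^d}(\tilde p, \tilde q_1,\ldots,\tilde q_{d-1})$ while any additional components (necessarily supported on coordinate hyperplanes) contribute nonnegatively, we conclude
\[
|V_U(p) \cap Y| \;\leq\; \deg\!\bigl(V_{\C^d}(\tilde p, \tilde q_1, \ldots, \tilde q_{d-1})\bigr) \;\leq\; \tdeg(p)\cdot \prod_{i=1}^{d-1} \|\lambda_i\|_1,
\]
as claimed.

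The main technical point to be careful about is keeping the degree of the cleared polynomial $\tilde q_i$ bounded by $\|\lambda_i\|_1$ rather than $2\|\lambda_i\|_1$, which is why the decomposition $\lambda_i = \lambda_i^+ - \lambda_i^-$ into disjointly supported positive and negative parts is essential; multiplying by $x^{\lambda_i^-}$ (and not by a larger monomial) produces the tightest Bezout factor. The only other subtlety is to match Heintz's definition of degree for reducible varieties so that the isolated points of the affine intersection that lie in $U$ actually contribute individually to the bound, which the definition handles exactly since $0$-dimensional components have degree equal to their cardinality.
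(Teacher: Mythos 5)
Your proof follows essentially the same strategy as the paper's: clear denominators with the $\lambda^\pm$-decomposition, reduce to affine polynomials of degree $\le \|\lambda_i\|_1$, and apply Heintz's Bezout inequality. Two points worth flagging. First, there is a small algebra slip: since $\lambda_i = \lambda_i^+ - \lambda_i^-$, one has $x^{\lambda_i^-}q_i = x^{\lambda_i^+} - \eta_i x^{\lambda_i^-}$, not $x^{\lambda_i^+ + \lambda_i^-} - \eta_i x^{\lambda_i^-}$; the degree bound $\deg\tilde q_i \le \|\lambda_i\|_1$ still holds (in fact more sharply, as $\max(\|\lambda_i^+\|_1, \|\lambda_i^-\|_1)$), so this does not damage the argument, but the written formula is wrong. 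Second, your final step is structured slightly differently from the paper's and as a result is slightly under-justified. The paper first bounds $\deg(\overline{Y}) \le \deg(V(\tilde q_1,\ldots,\tilde q_{d-1}))$ using that $\overline{Y}$ is an irreducible component of the latter, and then applies Bezout once more to the intersection $V(\tilde p)\cap\overline{Y}$, which is a finite set, so its degree equals its cardinality by the stated ``basic facts.'' You instead apply Bezout all at once to $V_{\C^d}(\tilde p,\tilde q_1,\ldots,\tilde q_{d-1})$ and then claim each point of $V_U(p)\cap Y$ contributes $1$ to the degree. For this to be valid, you need these points to be zero-dimensional irreducible components of the affine intersection, which requires knowing that every other component of $V_{\C^d}(\tilde q_1,\ldots,\tilde q_{d-1})$ besides $\overline{Y}$ lies on a coordinate hyperplane (true, since such a component meeting $U$ would be forced into $\overline{Y}$ by irreducibility, but this needs to be said) and that, consequently, no component through a point of $U$ can have positive dimension. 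You gesture at this in the parenthetical about coordinate hyperplanes, but the argument should be made explicit; the paper's two-step route via $\overline{Y}$ sidesteps this entirely and is cleaner.
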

\begin{proof}
Let $Y = V_U(q_1, \dots, q_{d-1})$.
By \lemref{irredtorcoset}, $Y$ is a $1$-dimensional irreducible quasi-projective variety.
Consequently, $V_U(p) \cap Y$ is either $Y$ or a finite set of points. It suffices to show that if the intersection is finite,
then $|V_U(p) \cap Y| \leq \tdeg(p) \cdot \prod_{i=1}^{d-1} \| \lambda_i \|_1$. So w.l.o.g. assume the intersection is finite.

We bound degrees. Let $\tilde{q}_i = x^{\lambda_i^+} - \eta_i x^{\lambda_i^-} \in \C[x_1, \dots, x_d]$ and let
$X = V(\tilde{q}_1, \dots, \tilde{q}_{d-1})$. Let $\overline{Y}$
be the Zariski closure of $Y$ in $\C^d$. Clearly, $\overline{Y}$ is an irreducible component of  $X$,
so $\deg(\overline{Y}) \leq \deg(X)$, and by Bezout's inequality
$$\deg(X) \leq \prod_{i=1}^{d-1} \deg(\tilde{q}_i) \leq \prod_{i=1}^{d-1} \|\lambda_i \|_1.$$

Let $\tilde{p} = x^{\gamma} p$ such that $\deg(\tilde p) = \tdeg(p)$ and $p \in \C[x_1, \dots, x_d]$.
Then, $V_U(p) = V_U(\tilde{p})$. By Bezout's inequality
$$\deg(V(\tilde{p}) \cap \overline{Y}) \leq  \deg(V(\tilde{p})) \deg(\overline{Y}) \leq \tdeg(p)  \prod_{i=1}^{d-1} \|\lambda_i \|.$$
The lemma now follows from the ``basic facts''.
\end{proof}

\subsection{Torsion points in varieties} \seclab{tpiv}

The key algebraic lemma required for our algorithm is the following. To condense notation, we set $C_d = \left(\frac{4}{3}\right)^{(d-1) (2d-3)/4} d^{(d-1)/2}$.
\begin{lemma} \lemlab{deeptoshallow}
Let $p \in \Q[x_1^{\pm1}, \dots, x_d^{\pm1}]$. Suppose $V(p)$ contains a torsion point $\bomega$ of order $N$ with
$$\phi(N) > C_d \tdeg(p) N^{(d-1)/d} .$$
Then $V(p)$ contains a torsion point $\bomega' \neq \vec 1$
of order $M < N$ where $\bomega', M$ depend only on $\bomega$.
\end{lemma}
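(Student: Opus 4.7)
The plan is to force a positive-dimensional torsion coset through $\bomega$ to lie in $V(p)$ and then extract $\bomega'$ from it. The argument combines LLL reduction of the stabilizer lattice with a Galois-orbit count on the coset, in the spirit of Liardet's proof in two variables.

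\emph{Setup.} Let $\Lambda = \{\lambda\in\Z^d : \bomega^\lambda = 1\}$ be the stabilizer of $\bomega$, a rank-$d$ sublattice of $\Z^d$ of index $N$, so $\vol(\Lambda)=N$. Apply Theorem~\ref{theorem:smallbasis} to obtain a basis $\lambda_1,\dots,\lambda_d$ of $\Lambda$ with $\prod_i\|\lambda_i\|_2 \le (4/3)^{d(d-1)/4}N$. Reorder so $\lambda_d$ has the largest norm; then Hadamard's inequality gives $\|\lambda_d\|_2\ge N^{1/d}$, whence
\[
\prod_{i=1}^{d-1}\|\lambda_i\|_2 \;\le\; (4/3)^{d(d-1)/4}\,N^{(d-1)/d}.
\]
Let $\Lambda' := \langle\lambda_1,\dots,\lambda_{d-1}\rangle$, let $L := (\Lambda'\otimes\Q)\cap\Z^d$ be its saturation (a direct summand of rank $d-1$), and set $M' := [L:\Lambda']$, so $\vol(L)=\vol(\Lambda')/M'$. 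By \lemref{tomthroot}, $\bomega^\mu$ is an $M'$-th root of unity for every $\mu\in L$. A second application of Theorem~\ref{theorem:smallbasis}, now to $L$, yields a basis $\mu_1,\dots,\mu_{d-1}$ of $L$ with $\prod_i\|\mu_i\|_2\le(4/3)^{(d-1)(d-2)/4}\vol(L)$. Set $\eta_i := \bomega^{\mu_i}$ and $M_0 := \mathrm{lcm}_i\mathrm{ord}(\eta_i)$, so that $M_0 \mid M'$ and $M_0\mid N$.

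\emph{Torsion coset and Galois count.} Let $Y := V_U(x^{\mu_i}-\eta_i : 1\le i\le d-1)$. By \lemref{irredtorcoset} this is a $1$-dimensional irreducible torsion coset, and $\bomega\in Y$ by construction. Since $p\in\Q[x^{\pm1}]$, $V(p)$ is stable under $\Galois(\overline{\Q}/\Q)$, so every Galois conjugate $\bomega^k$ with $\gcd(k,N)=1$ lies in $V(p)$. A direct computation shows $\bomega^k\in Y$ iff $(\bomega^{\mu_i})^k=\eta_i$ for each $i$, iff $\eta_i^{k-1}=1$, iff $k\equiv 1\pmod{M_0}$. Since $M_0\mid N$, the reduction $(\Z/N\Z)^\times\to(\Z/M_0\Z)^\times$ is surjective with kernel of size $\phi(N)/\phi(M_0)$, so $|V(p)\cap Y|\ge\phi(N)/\phi(M_0)$.

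\emph{Bezout and extraction.} Assume for contradiction $Y\not\subset V(p)$. Then \lemref{BezoutinU} combined with $\|v\|_1\le\sqrt d\,\|v\|_2$ gives
\[
\phi(N)/\phi(M_0) \;\le\; \tdeg(p)\prod\nolimits_i\|\mu_i\|_1 \;\le\; \tdeg(p)\,d^{(d-1)/2}(4/3)^{(d-1)(d-2)/4}\vol(L).
\]
Since $\phi(M_0)\le M_0\le M'$ and $\vol(L)\le\bigl(\prod_{i=1}^{d-1}\|\lambda_i\|_2\bigr)/M'$, the $M'$'s cancel, and combining with the bound from the Setup yields $\phi(N)\le C_d\tdeg(p) N^{(d-1)/d}$, contradicting the hypothesis. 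Hence $Y\subset V(p)$. To produce $\bomega'$, extend $\mu_1,\dots,\mu_{d-1}$ to a basis of $\Z^d$; the monomial change of variables $y_i=x^{\mu_i}$ turns $Y$ into the line $\{y_1=\eta_1,\dots,y_{d-1}=\eta_{d-1}\}$ with $y_d$ free, and its torsion points correspond to roots of unity $y_d$. Choosing $y_d=1$ when $M_0>1$ gives a torsion point of order $M_0$; when $M_0=1$ the coset contains $\vec 1$ and $y_d=-1$ gives a torsion point of order $2$. Either way, $\bomega'\in V(p)\setminus\{\vec 1\}$ has order strictly less than $N$ (the hypothesis forces $N$ to be large enough), and it depends only on $\bomega$ via the deterministic construction above.

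The main obstacle is the coordination between the two LLL steps: the factor $M'$ lost when passing from $\Lambda'$ to its saturation $L$ is exactly the factor recovered from the sharpened Galois count $\phi(N)/\phi(M_0)$, so the final inequality is independent of how badly $\lambda_1,\dots,\lambda_{d-1}$ fail to generate a summand of $\Z^d$. Tightening the numerical constant down to the stated $C_d=(4/3)^{(d-1)(2d-3)/4}d^{(d-1)/2}$ requires slightly sharper inputs (Minkowski successive-minima bounds in place of the plain LLL estimate), but the above argument already produces a bound of the correct form.
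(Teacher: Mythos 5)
Your argument reproduces the paper's proof step by step: LLL on the stabilizer lattice $\Lambda$ of $\bomega$, drop the longest basis vector, saturate the remaining rank-$(d-1)$ sublattice, LLL again, bound $|V(p)\cap Y|$ from below by a Galois-orbit count and from above by Bezout via \lemref{BezoutinU}, conclude $Y\subset V(p)$, then extract a lower-order torsion point by extending to a basis of $\Z^d$ (your monomial change of variables is equivalent to the paper's explicit mod-$M$ linear system). However, there is a concrete gap at the Setup step, and you do not actually obtain a contradiction with the stated hypothesis.

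Your bound
\[
\prod_{i=1}^{d-1}\|\lambda_i\|_2 \le \frac{(4/3)^{d(d-1)/4}N}{N^{1/d}} = (4/3)^{d(d-1)/4}N^{(d-1)/d}
\]
combines the LLL upper bound on $\prod_{i=1}^d\|\lambda_i\|_2$ with the Hadamard lower bound $\|\lambda_d\|_2\ge N^{1/d}$, and this is strictly weaker than what the paper uses. Tracking this through the rest of your argument, the final inequality you reach (assuming $Y\not\subset V(p)$) is
\[
\phi(N)\le (4/3)^{(d-1)(2d-2)/4}\,d^{(d-1)/2}\,\tdeg(p)\,N^{(d-1)/d}
= (4/3)^{(d-1)/4}\,C_d\,\tdeg(p)\,N^{(d-1)/d},
\]
which does \emph{not} contradict the hypothesis $\phi(N)>C_d\,\tdeg(p)\,N^{(d-1)/d}$, because the constant on the right is strictly larger than $C_d$. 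So as written the contradiction never materializes. Your closing remark misdiagnoses the fix: you suggest Minkowski's successive-minima theorem is needed, but in fact the paper stays entirely within LLL. The correct sharpening is elementary: when $\|\lambda_1\|_2\le\cdots\le\|\lambda_d\|_2$, one has $\|\lambda_d\|_2\ge\bigl(\prod_{i=1}^d\|\lambda_i\|_2\bigr)^{1/d}$, hence
\[
\prod_{i=1}^{d-1}\|\lambda_i\|_2 \le \Bigl(\prod_{i=1}^d\|\lambda_i\|_2\Bigr)^{(d-1)/d} \le \bigl((4/3)^{d(d-1)/4}N\bigr)^{(d-1)/d} = (4/3)^{(d-1)^2/4}N^{(d-1)/d}.
\]
This saves a factor of $(4/3)^{(d-1)/4}$ (taking the $(d-1)/d$-th power of the LLL upper bound, rather than dividing the upper bound by a separate lower bound on $\|\lambda_d\|_2$) and recovers $C_d$ exactly; with this substitution the rest of your argument goes through unchanged and matches the paper's.
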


To prove this, we show that any torsion point of sufficiently high order is contained in a one-dimensional torsion coset defined by polynomials of relatively
small degree. Moreover, we ensure that the torsion coset contains a torsion point of lower order. The small degrees of the polynomials then allows
us to use Bezout's inequality. We denote the $\ell_1$ norm of a vector $\gamma \in \Z^d$ by $\| \gamma \|_1$.

\begin{lemma} \label{lemma:torsioncoset}
Let $\bomega$ be a torsion point of order $N$ where $N^{1/d} > \left(\frac{4}{3}\right)^{\frac{(d-1)^2}{4}}$.
For some $M< N$, there exist $M$th roots of unity $\eta_1, \dots, \eta_{d-1}$ and vectors $ \lambda_1, \dots, \lambda_{d-1} \in \Z^d$ such that
\begin{itemize}
\item $\bomega$ is a zero of $x^{\lambda_i} - \eta_i$ for all $i = 1, \dots d-1$,
\item $\prod_{i=1}^{d-1} \| \lambda_i \|_1 \leq \frac{C_d}{M} N^{(d-1)/d} $
\item $\lambda_1, \dots, \lambda_{d-1}$ generate a summand of $\Z^d$
\end{itemize}

\end{lemma}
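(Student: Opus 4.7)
The plan is to build a rank $d-1$ direct summand $\Lambda_0$ of $\Z^d$ that contains a finite-index sublattice of the ``vanishing lattice'' $\Lambda := \{\lambda \in \Z^d : \bomega^\lambda = 1\}$, and then take $\mu_1, \ldots, \mu_{d-1}$ to be a short basis of $\Lambda_0$. The order $M$ will emerge as the index $[\Lambda_0 : \Lambda_0 \cap \Lambda]$, which by \lemref{tomthroot} forces the values $\bomega^{\mu_i}$ to be $M$-th roots of unity. The norms of the $\mu_i$ and the value of $M$ will both be controlled via the volumes of the lattices appearing in this construction.

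First I would observe that $\bomega^{\Z^d}$ is cyclic of order $N$, so $[\Z^d : \Lambda] = N$ and $\vol(\Lambda) = N$. Apply Theorem \ref{theorem:smallbasis} to $\Lambda$ to produce a basis $\lambda_1, \ldots, \lambda_d$ with $\prod_{i=1}^d \|\lambda_i\|_2 \leq (4/3)^{d(d-1)/4} N$. Reordering so that $\lambda_d$ has the largest norm, Hadamard's inequality gives $\|\lambda_d\|_2 \geq N^{1/d}$. Consequently $\prod_{i=1}^{d-1} \|\lambda_i\|_2 \leq (4/3)^{d(d-1)/4} N^{(d-1)/d}$, and more carefully, the Gram-Schmidt structure of an LLL-reduced basis combined with Lovász's condition yields a sharper bound $\vol(\langle \lambda_1, \ldots, \lambda_{d-1}\rangle) \leq (4/3)^{(d-1)/4} N^{(d-1)/d}$, which is the estimate that will match the stated $C_d$.

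Next let $\Lambda_1 = \langle \lambda_1, \ldots, \lambda_{d-1}\rangle$ and let $\Lambda_0 = \Z^d \cap \Q\Lambda_1$ be its saturation in $\Z^d$. Standard lattice theory says $\Lambda_0$ is a direct summand of $\Z^d$ of rank $d-1$. Set $M = [\Lambda_0 : \Lambda_1]$; then for any $\mu \in \Lambda_0$ we have $M\mu \in \Lambda_1 \subseteq \Lambda$, so by \lemref{tomthroot}, $\bomega^\mu$ is an $M$-th root of unity. Since $M = \vol(\Lambda_1)/\vol(\Lambda_0) \leq \vol(\Lambda_1)$ by \lemref{volgeq1}, the bound on $\vol(\Lambda_1)$ together with the hypothesis $N^{1/d} > (4/3)^{(d-1)^2/4}$ forces $M < N$.

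Finally, apply Theorem \ref{theorem:smallbasis} again, this time to $\Lambda_0$, to obtain a basis $\mu_1, \ldots, \mu_{d-1}$ with $\prod_{i=1}^{d-1} \|\mu_i\|_2 \leq (4/3)^{(d-1)(d-2)/4} \vol(\Lambda_0) = (4/3)^{(d-1)(d-2)/4}\vol(\Lambda_1)/M$. Converting to the $1$-norm via $\|\mu\|_1 \leq \sqrt{d}\,\|\mu\|_2$ and substituting the volume bound produces $\prod_{i=1}^{d-1} \|\mu_i\|_1 \leq (C_d/M) N^{(d-1)/d}$. Setting $\eta_i := \bomega^{\mu_i}$ completes the construction. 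The main obstacle is the bookkeeping required to extract the sharp constant $C_d = (4/3)^{(d-1)(2d-3)/4} d^{(d-1)/2}$: the naive combination of the two LLL applications is slightly loose, and matching the stated exponent requires the Gram-Schmidt-based volume estimate on $\Lambda_1$ mentioned above, rather than the coarser Hadamard bound.
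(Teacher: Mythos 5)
Your overall strategy --- define the vanishing lattice $\Lambda = \{\gamma \in \Z^d : \bomega^\gamma = 1\}$, run LLL on it, pass to a rank-$(d-1)$ sublattice, saturate, run LLL again, and convert to the $\ell^1$-norm --- is exactly the paper's. The gap is in the single quantitative step where you bound $\prod_{i=1}^{d-1}\|\lambda_i\|_2$, and it matters.

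Your primary estimate uses Hadamard's inequality $\prod_{i=1}^d \|\lambda_i\|_2 \ge N$ to get $\|\lambda_d\|_2 \ge N^{1/d}$, and then divides the LLL bound to obtain
\[
\prod_{i=1}^{d-1}\|\lambda_i\|_2 \le \left(\frac{4}{3}\right)^{d(d-1)/4} N^{(d-1)/d}.
\]
This is genuinely too weak: the exponent $d(d-1)/4$ fails to give the stated constant $C_d$, and, more seriously, the verification of $M<N$ needs $(4/3)^{?}\,N^{(d-1)/d}<N$, i.e.\ $N^{1/d}>(4/3)^{?}$, and the hypothesis only grants $N^{1/d} > (4/3)^{(d-1)^2/4}$, which is strictly weaker than what your exponent requires for $d\ge 2$. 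You recognize the looseness, but the proposed fix --- a Gram--Schmidt/Lov\'asz bound $\vol(\langle\lambda_1,\dots,\lambda_{d-1}\rangle)\le (4/3)^{(d-1)/4}N^{(d-1)/d}$ --- is neither justified nor consistent with the statement: tracking it through the second LLL application gives an exponent $(d-1)(d-2)/4 + (d-1)/4 = (d-1)^2/4$, whereas the stated $C_d$ has exponent $(d-1)(2d-3)/4 = (d-1)(d-2)/4 + (d-1)^2/4$. For $d>2$ these differ, so your ``sharper'' bound would overshoot $C_d$ rather than matching it.

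The step you are missing is elementary and requires no Gram--Schmidt machinery. Order the LLL basis so that $\|\gamma_1'\|_2 \le \dots \le \|\gamma_d'\|_2$; then the largest term dominates the geometric mean, $\|\gamma_d'\|_2 \ge \bigl(\prod_i \|\gamma_i'\|_2\bigr)^{1/d}$, hence
\[
\prod_{i=1}^{d-1}\|\gamma_i'\|_2 = \frac{\prod_{i=1}^d\|\gamma_i'\|_2}{\|\gamma_d'\|_2}
\le \left(\prod_{i=1}^d\|\gamma_i'\|_2\right)^{(d-1)/d}
\le \left(\left(\tfrac{4}{3}\right)^{d(d-1)/4}N\right)^{(d-1)/d}
= \left(\tfrac{4}{3}\right)^{(d-1)^2/4}N^{(d-1)/d}.
\]
The point is to bound the removed vector $\|\gamma_d'\|_2$ from below by the $d$-th root of the \emph{same} product that LLL is controlling, rather than by the weaker $N^{1/d}$. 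This exponent $(d-1)^2/4$ is exactly what makes $M<N$ follow from the hypothesis, and combined with the second LLL factor $(4/3)^{(d-1)(d-2)/4}$ and the $\ell^1/\ell^2$ conversion $d^{(d-1)/2}$, it reproduces the stated $C_d$ precisely.
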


\begin{proof}
By assumption, there is a primitive $N$th root of unity $\zeta$ and $\kappa = (k_1, \dots, k_d) \in \Z^d$ such that $\bomega = (\zeta^{k_1}, \dots, \zeta^{k_d})$.
Let $\Gamma' = \{ \gamma \in \Z^d \; | \; \gamma \cdot \kappa \equiv 0 \text{ mod } N \}$ which is precisely the set of integer vectors
satisfying $\bomega^\gamma = \vec 1 $. Note that $\gcd(k_1, \dots, k_d, N) = 1$, and so there is some $\gamma \in \Z^d$ such that
$\gamma \cdot \kappa = 1$ (mod $N$). Thus, $\Gamma'$ has index $N$ in $\Z^d$, and $\vol(\Gamma') = N$.

By Theorem \ref{theorem:smallbasis}, there is a basis $\gamma_1', \dots, \gamma_d'$ of $\Gamma'$ such that
$$\prod_{i=1}^d \| \gamma_i '\|_2 \leq \left(\frac{4}{3}\right)^{d(d-1)/4} N.$$
Without loss of generality, assume $\|\gamma_1 '\|_2 \leq \| \gamma_2' \|_2 \leq \dots \leq \| \gamma_d' \|_2$, and
set $\Lambda' = \langle \gamma_1' , \dots,  \gamma_{d-1}' \rangle$. With this assumption,
$$\prod_{i=1}^{d-1}  \| \gamma_i' \|_2 \leq ( \left(\frac{4}{3}\right)^{d(d-1)/4} N)^{(d-1)/d} = \left(\frac{4}{3}\right)^{(d-1)^2/4} N^{(d-1)/d}.$$
Let $\Lambda = \{ \lambda \in \Z^d \; | \; s \lambda \in \Lambda' \text{ for some } 0 \neq s \in \Z \}$. We now establish
some claims about $\Lambda$.

\paragraph{Claim 1:} $M := [\Lambda: \Lambda'] \leq \left(\frac{4}{3}\right)^{\frac{(d-1)^2}{4}} N^{(d-1)/d} < N$ \\

From \lemref{volgeq1}, we obtain $M = [\Lambda: \Lambda'] = \vol(\Lambda')/\vol(\Lambda) \leq \vol(\Lambda')$.
By Hadamard's inequality,
$$\vol(\Lambda') \leq \prod_{i=1}^{d-1}  \| \gamma_i' \|_2 \leq \left(\frac{4}{3}\right)^{(d-1)^2/4} N^{(d-1)/d} < N.$$

\paragraph{Claim 2:} There is a basis $\lambda_1, \dots, \lambda_{d-1}$ of $\Lambda$ satisfying
$$\prod_{i=1}^{d-1} \| \lambda_i \|_1 \leq \frac{C_d}{M} N^{(d-1)/d}.$$

\noindent By Theorem \ref{theorem:smallbasis},
$\Lambda$ has a basis $\lambda_1, \dots, \lambda_{d-1}$ satisfying $\prod_{i=1}^{d-1} \| \lambda_i \|_2 \leq \left(\frac{4}{3}\right)^{(d-1)(d-2)/4} \vol(\Lambda)$.
We also have $\vol(\Lambda) = \vol(\Lambda')/M$, and by Hadamard's inequality,
$\vol(\Lambda') \leq \prod_{i=1}^{d-1} \| \gamma_i' \|_2 \leq \left(\frac{4}{3}\right)^{\frac{(d-1)^2}{4}} N^{(d-1)/d}$. These inequalities and
the fact that $\|\vec v \|_1 \leq d^{1/2}\| \vec v \|_2$ establish Claim 2.

We are now essentially finished. By Lemma \ref{lemma:tomthroot}, $\eta_i =\bomega^{\lambda_i}$ is an $M$th root of unity,
and $\bomega$ is a zero of $x^{\lambda_i} - \eta_i$. By definition of $\Lambda$, it is necessarily a direct summand of $\Z^d$.
\end{proof}

\begin{proof}[Proof of \lemref{deeptoshallow}]
As in the previous proof $\bomega = (\zeta^{k_1}, \dots, \zeta^{k_d})$ where $\zeta$ is a primitive $N$th root of unity
and $\gcd(k_1, \dots, k_d, N) = 1$.
The lemma will follow essentially from the combination of \lemref{BezoutinU} and Lemma \ref{lemma:torsioncoset}.

We first set up the polynomials defining a torsion coset. Note that $N > \phi(N)$, and so it follows from the hypothesis
that $N^{1/d} > \left(\frac{4}{3}\right)^{\frac{(d-1)^2}{4}}$. Let $\lambda_i, \eta_i, M$ for $1 \leq i \leq d-1$ be as in Lemma \ref{lemma:torsioncoset}.
Let $q_i = x^{\lambda_i} - \eta_i$, and set $Y = V_U(q_1,\dots, q_{d-1})$. Let $\eta$ be some primitive $M$th root of unity
and write $\eta_i = \eta^{m_i}$ for some $m_i \in \Z$.

We estimate $|V_U(p) \cap Y|$. Since
the coefficients of $p$ and the $q_i$ lie in $\Q(\eta)$, for any $\sigma \in \Galois(\Q(\zeta)/\Q(\eta))$ we have
$p(\sigma(\bomega)) = \sigma(p(\bomega)) = 0$ and $q_i(\sigma(\bomega)) = \sigma(q_i(\bomega)) = 0$. Since
$\langle \zeta \rangle = \langle \zeta^{k_1}, \dots, \zeta^{k_d} \rangle$, any Galois automorphism fixing
$\bomega$ also fixes $\Q(\zeta)$. Consequently, the $\Galois(\Q(\zeta)/\Q(\eta))$ orbit of $\bomega$ has size
$|\Galois(\Q(\zeta)/\Q(\eta))| = \phi(N)/\phi(M)$. It follows that
\begin{align} | V_U(p) \cap Y | & \geq  \phi(N)/\phi(M) > \phi(N)/M \\
&  >  \frac{C_d}{M} N^{(d-1)/d}   \tdeg(p) \\
&\geq  \tdeg(p) \prod_{i=1}^{d-1} \| \lambda_i \|_1 .\end{align}
By \lemref{BezoutinU}, $V_U(p) \supset Y$.

It remains to show that $Y$ contains a torsion point $\neq \vec 1$ whose coordinates are $M$th roots of unity. First suppose $M > 1$.
Since $\Lambda = \langle \lambda_1, \dots, \lambda_{d-1}\rangle$ is a direct summand of $\Z^d$, there is a vector $\lambda_d$ which extends $\lambda_1, \dots, \lambda_{d-1}$
to a basis of $\Z^d$. Let $q_d(x) = x^{\lambda_d} - 1$. If we identify $\langle \eta \rangle \cong \Z/M\Z$, then the system of equations
$q_1 = \dots = q_d = 0$ restricted to $\langle \eta \rangle^d \subset \C^d$ is equivalent to the $\Z/M\Z$-linear system
$$ \left( \begin{array}{c} \lambda_1 \\ \vdots \\ \lambda_{d-1} \\ \lambda_d \end{array} \right) X =  \left( \begin{array}{c} m_1 \\ \vdots \\ m_{d-1} \\ 0  \end{array} \right).$$
Since the matrix is invertible in $\Z$, it is invertible as a matrix in $\Z/M\Z$, and so there is some solution.

Suppose instead $M = 1$. Then $q_i = x^{\lambda_i} -1$ for all $i \leq d-1$. Set instead $q_d = x^{\lambda_d} +1$. Then the above argument shows that
$Y$ contains some torsion point in $\{ \pm 1 \}^d$ which is not $\vec 1$.
\end{proof}

Although $p$ was assumed to be a rational polynomial for \lemref{deeptoshallow}, the lemma can be modified for any complex polynomial. The algorithm
will then extend if the field generated by the coefficients of $p$ can be sufficiently understood. We let $\Q_{ab}$ be the field generated over $\Q$ by all
roots of unity.
\begin{lemma} \lemlab{deeptoshallowgeneral}
Let $p \in \C[x_1^{\pm1}, \dots, x_d^{\pm1}]$, and let $K\subset \C$ be the field generated by $\Q$ and the coefficients of $p$.
Suppose $V(p)$ contains a torsion point $\bomega$ of order $N$ satisfying
$$\phi(N) > C_d \tdeg(p) [K \cap \Q_{ab}: \Q] N^{(d-1)/d}  ,$$
Then $V(p)$ contains a torsion point
$\bomega' \neq \vec 1$ of order $M < N$ where $\bomega', M$ depend only on $\bomega$.
\end{lemma}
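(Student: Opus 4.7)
The plan is to mirror the proof of \lemref{deeptoshallow} but replace the Galois group $\Galois(\Q(\zeta)/\Q(\eta))$ by $\Galois(K(\zeta)/K(\eta))$ when counting the orbit of $\bomega$ inside $V_U(p)\cap Y$. All other ingredients — Minkowski-reduced lattice bases via Theorem \ref{theorem:smallbasis}, construction of the one-dimensional torsion coset $Y = V_U(q_1,\ldots,q_{d-1})$ with $q_i = x^{\lambda_i}-\eta_i$ via Lemma \ref{lemma:torsioncoset}, and the Bezout-type bound \lemref{BezoutinU} — are independent of the coefficient field of $p$ and transfer verbatim. I would also verify at the outset that the hypothesis $\phi(N) > C_d\,\tdeg(p)\,[K\cap\Q_{ab}:\Q]\,N^{(d-1)/d}$ implies $N^{1/d} > (4/3)^{(d-1)^2/4}$ (needed to apply Lemma \ref{lemma:torsioncoset}); this follows from $\phi(N)\leq N$ and the short exponent comparison $C_d\geq (4/3)^{(d-1)^2/4}$.

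Writing $\bomega = (\zeta^{k_1},\ldots,\zeta^{k_d})$ with $\zeta$ a primitive $N$th root of unity and $\gcd(k_1,\ldots,k_d,N)=1$, the $q_i$ have coefficients in $\Q(\eta)\subseteq K(\eta)$ while $p$ has coefficients in $K\subseteq K(\eta)$, so every $\sigma\in\Galois(K(\zeta)/K(\eta))$ preserves both $V_U(p)$ and $Y$. Since the coordinates of $\bomega$ generate $\langle\zeta\rangle$, the stabilizer of $\bomega$ in this Galois group is trivial, giving $|V_U(p)\cap Y| \geq [K(\zeta):K(\eta)]$. The main step is bounding this index below: using the identity $[K(\zeta_n):K] = \phi(n)/[K\cap\Q(\zeta_n):\Q]$ and the inclusions $K\cap\Q(\eta)\subseteq K\cap\Q(\zeta)\subseteq K\cap\Q_{ab}$, I obtain
$$[K(\zeta):K(\eta)] \;=\; \frac{\phi(N)}{\phi(M)}\cdot\frac{[K\cap\Q(\eta):\Q]}{[K\cap\Q(\zeta):\Q]} \;\geq\; \frac{\phi(N)}{\phi(M)\,[K\cap\Q_{ab}:\Q]}.$$

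Combining this with the hypothesis, the estimate $\prod_{i=1}^{d-1}\|\lambda_i\|_1 \leq (C_d/M)N^{(d-1)/d}$ from Lemma \ref{lemma:torsioncoset}, and $\phi(M)\leq M$ yields
$$|V_U(p)\cap Y| \;>\; \frac{C_d\,\tdeg(p)\,N^{(d-1)/d}}{M} \;\geq\; \tdeg(p)\prod_{i=1}^{d-1}\|\lambda_i\|_1,$$
so \lemref{BezoutinU} forces $V_U(p)\supseteq Y$. The final step — exhibiting a nontrivial $M$th-order torsion point in $Y$ by solving a $\Z/M\Z$-linear system (or, in the edge case $M=1$, by replacing $q_d=x^{\lambda_d}-1$ with $q_d=x^{\lambda_d}+1$) — is purely combinatorial and carries over unchanged. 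The only genuine departure from the rational case is the Galois-orbit count, where the factor $[K\cap\Q_{ab}:\Q]$ precisely measures the degeneracy introduced when $K$ already contains roots of unity, explaining why that factor must appear in the hypothesis.
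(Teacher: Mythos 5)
Your proof is correct and follows essentially the same strategy as the paper: reduce to the Galois-orbit count over $K$ instead of $\Q$, then close the gap with the factor $[K\cap\Q_{ab}:\Q]$. The only stylistic difference is in how you lower-bound $[K(\zeta):K(\eta)]$: you invoke the standard compositum identity $[K(\zeta_n):K]=\phi(n)/[K\cap\Q(\zeta_n):\Q]$ directly, whereas the paper derives the equivalent bound by arguing that the minimal polynomial of $\zeta$ over $K$ already has coefficients in $K\cap\Q_{ab}$, giving $[K(\zeta):K]=[K'(\zeta):K']$ with $K'=K\cap\Q_{ab}$; both routes land on the same estimate $[K(\zeta):K(\eta)]\geq\phi(N)/(\phi(M)[K\cap\Q_{ab}:\Q])$.
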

\begin{proof} Apart from the paragraph beginning with ``We estimate $|V_U(p) \cap Y|$...'', the argument for \lemref{deeptoshallow} applies. We replace the
aforementioned paragraph with the following. Let $K' = K \cap \Q_{ab}$.

First, we need to show $[K(\zeta): K] = [K'(\zeta): K']$. Let $f(x)$ be the minimal polynomial of $\zeta$ over $K$, and let $g(x)$ be the minimal polynomial
of $\zeta$ over $\Q$. All the roots of $g$ are powers of $\zeta$, and since $f$ necessarily divides $g$, the same holds for $f$.
Consequently, $f \in \Q(\zeta)[x]$, and so $f \in K'[x]$, and this implies $f$ is a minimal polynomial for $\zeta$ over $K'$. Thus, $[K(\zeta): K] = \deg(f) =  [K'(\zeta): K']$.

Next, we estimate $|V_U(p) \cap Y|$. Since
the coefficients of $p$ and the $q_i$ lie in $K(\eta)$, for any $\sigma \in \Galois(K(\zeta)/K(\eta))$ we have
$p(\sigma(\bomega)) = \sigma(p(\bomega)) = 0$ and $q_i(\sigma(\bomega)) = \sigma(q_i(\bomega)) = 0$. Since
$\langle \zeta \rangle = \langle \zeta^{m_1}, \dots, \zeta^{m_d} \rangle$, any Galois automorphism fixing
$\bomega$ and $K$ also fixes $K(\zeta)$. Consequently, the $\Galois(K(\zeta)/K(\eta))$ orbit of $\bomega$ has size
$|\Galois(K(\zeta)/K(\eta))|$. Note that adjoining any root of unity (to a characteristic $0$ field) results in a Galois extension,
and so
$$ |\Galois(K(\zeta)/K(\eta))| = \frac{[K(\zeta): K]}{[K(\eta):K]} \geq \frac{[K'(\zeta):K']}{\phi(M)} > \frac{[K'(\zeta): \Q]}{[K':\Q] M} \geq \frac{\phi(N)}{[K':\Q]M}$$
It follows that
$$  | V(p) \cap V(q_1, \dots, q_{d-1}) |   >     \tdeg(p) \prod_{i=1}^{d-1} \| \lambda_i \|_1.$$
By \lemref{BezoutinU}, $V_U(p) \supset Y$.
\end{proof}

\subsection{Effective estimates for excluding torsion points}

\begin{prop} \proplab{simplealg}
Let $p_1, \dots, p_n \in \Q[x_1^{\pm 1}, \dots, x_d^{\pm 1}]$ and set $\hat C = C_d \max(\tdeg(p_i))$.
Let $N_0$ be sufficiently large such that $N > N_0 \Rightarrow \phi(N) >  \hat C N^{(d-1)/d} $. If $\bomega \notin V(p_1, \dots, p_n)$
for all torsion points $\bomega \neq \vec 1$ of order $N \leq N_0$, then $V(p_1, \dots, p_n)$ cannot contain any torsion point except $\vec 1$.
\end{prop}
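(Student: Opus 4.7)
The plan is a simple minimal-counterexample descent argument, essentially bootstrapping \lemref{deeptoshallow} from a single polynomial to a finite collection. Suppose for contradiction that $V(p_1,\dots,p_n)$ contains some torsion point $\bomega \neq \vec 1$, and choose such a $\bomega$ whose order $N$ is minimal among all non-trivial torsion points in the common zero set. By the hypothesis of the proposition, every non-trivial torsion point of order $\leq N_0$ lies outside $V(p_1,\dots,p_n)$, so in fact $N > N_0$.

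By the definition of $N_0$, we then have
$$\phi(N) \;>\; \hat C\, N^{(d-1)/d} \;=\; C_d \max_i \tdeg(p_i)\, N^{(d-1)/d},$$
so the hypothesis of \lemref{deeptoshallow} is satisfied for each polynomial $p_i$ at the point $\bomega$ simultaneously. Applying the lemma to each $p_i$ yields, for every $i$, a torsion point $\bomega'_i \neq \vec 1$ of order $M_i < N$ lying in $V(p_i)$. The crucial point I would exploit is the clause in \lemref{deeptoshallow} that $\bomega'$ and $M$ \emph{depend only on $\bomega$}, not on the ambient polynomial. Hence all the $\bomega'_i$ coincide with a single torsion point $\bomega'$ of a single order $M < N$, and this $\bomega'$ lies in every $V(p_i)$, i.e.\ in $V(p_1,\dots,p_n)$.

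But then $\bomega'$ is a non-trivial torsion point in the common zero set of order strictly less than $N$, contradicting the minimality of $N$. Therefore no non-trivial torsion point can lie in $V(p_1,\dots,p_n)$, as required.

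There is essentially no obstacle in this step; the whole technical weight has already been absorbed into \lemref{deeptoshallow}. The only ``move'' to notice is that because the descent $\bomega \mapsto \bomega'$ is intrinsic to $\bomega$ (it is produced from the torsion coset through $\bomega$, independently of which polynomial vanishes on $\bomega$), a per-polynomial descent automatically promotes to a descent on an intersection of polynomial zero sets. The minor bookkeeping point worth double-checking in the write-up is that the bound $\hat C = C_d \max_i \tdeg(p_i)$ really does dominate $C_d \tdeg(p_i)$ uniformly in $i$, which is immediate from the definition.
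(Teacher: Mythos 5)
Your proof is correct and is exactly what the paper has in mind, since the paper dismisses this proposition as a ``straightforward consequence of \lemref{deeptoshallow}'' and leaves the descent argument implicit. You correctly identify the load-bearing clause --- that the descended point $\bomega'$ and its order $M$ depend only on $\bomega$, not on the polynomial --- which is precisely what the lemma's phrasing was designed to enable, so the per-polynomial conclusion transfers to the common zero locus and the minimal-counterexample argument closes.
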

\begin{proof} This is a straightforward consequence of \lemref{deeptoshallow}.
\end{proof}

Using the more general \lemref{deeptoshallowgeneral}, we obtain the following.
\begin{prop} \proplab{simplealggeneral}
Let $p_1, \dots, p_n \in \C[x_1^{\pm 1}, \dots, x_d^{\pm 1}]$ and set $\hat C = C_d  [K \cap \Q_{ab}: \Q] \max(\tdeg(p_i))$ where
$K \subset \C$ is the field generated by
$\Q$ and the coefficients of $p_1, \dots, p_n$.
Let $N_0$ be sufficiently large such that $N > N_0 \Rightarrow \phi(N) > \hat C N^{(d-1)/d} $. If $\bomega \notin V(p_1, \dots, p_n)$
for all torsion points $\bomega \neq \vec 1$ of order $N \leq N_0$, then $V(p_1, \dots, p_n)$ cannot contain any torsion point except $\vec 1$.
\end{prop}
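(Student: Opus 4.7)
The plan is to argue by contrapositive: assume $V(p_1,\dots,p_n)$ contains some torsion point $\bomega \neq \vec 1$, and show that one can always find a torsion point of order at most $N_0$ in this variety. This is essentially a direct iteration of \lemref{deeptoshallowgeneral}, with two small bookkeeping points to be verified carefully.

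First, I would observe that, for any individual polynomial $p_i$, the field $K_i$ generated by its coefficients is a subfield of the field $K$ generated by all coefficients, so $[K_i \cap \Q_{ab}:\Q] \leq [K \cap \Q_{ab}:\Q]$. Combined with $\tdeg(p_i) \leq \max_j \tdeg(p_j)$, this shows that the hypothesis of \lemref{deeptoshallowgeneral} applied to $p_i$ is weaker than the hypothesis of \propref{simplealggeneral}. In particular, if $\bomega \in V(p_1,\dots,p_n)$ has order $N > N_0$, then $\phi(N) > \hat C N^{(d-1)/d} \geq C_d\, \tdeg(p_i)\, [K_i\cap\Q_{ab}:\Q]\, N^{(d-1)/d}$ for each $i$.

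The second key step is to exploit the fact, emphasized in \lemref{deeptoshallowgeneral}, that the torsion point $\bomega'$ and its order $M$ depend only on $\bomega$. Tracing through the proof of that lemma, the witness for $\bomega'\in V(p_i)$ is actually an entire $1$-dimensional torsion coset $Y = V_U(q_1,\dots,q_{d-1})$ containing both $\bomega$ and $\bomega'$, with the conclusion $V(p_i)\supset Y$. Since the coset $Y$ depends only on $\bomega$ (not on $p_i$), applying the lemma to each $p_i$ individually yields $V(p_1,\dots,p_n) \supset Y \ni \bomega'$. Thus $\bomega' \neq \vec 1$ is a torsion point of order $M < N$ lying in $V(p_1,\dots,p_n)$.

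Finally, I would close the argument by iterating: starting from $\bomega$ of order $N > N_0$, this produces a strictly decreasing chain of torsion-point orders, each in $V(p_1,\dots,p_n)\setminus\{\vec 1\}$. Since the orders are positive integers and each step shrinks the order while keeping the point distinct from $\vec 1$, the process must terminate at a torsion point $\neq \vec 1$ of order at most $N_0$, contradicting the hypothesis. No step here looks genuinely hard; the only mild subtlety is the aforementioned observation that the torsion coset produced by \lemref{deeptoshallowgeneral} is uniform across the $p_i$, so that one really gets a common refinement in $V(p_1,\dots,p_n)$ rather than only in some individual $V(p_i)$.
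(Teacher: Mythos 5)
Your proof is correct and matches the intent of the paper, which states \propref{simplealg} and \propref{simplealggeneral} as "straightforward consequences" of \lemref{deeptoshallow} and \lemref{deeptoshallowgeneral} without spelling out the argument. You correctly identified the two points on which the reduction actually hinges: first, that the hypotheses of \lemref{deeptoshallowgeneral} for each individual $p_i$ follow from the single bound on $\hat C$, because both $\tdeg(p_i) \le \max_j \tdeg(p_j)$ and $[K_i \cap \Q_{ab}:\Q] \le [K \cap \Q_{ab}:\Q]$; and second, that the output $(\bomega',M)$ of the lemma depends only on $\bomega$, so the same $\bomega'$ lies in every $V(p_i)$ simultaneously and hence in $V(p_1,\dots,p_n)$. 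This second point is exactly why the authors phrased the conclusion of \lemref{deeptoshallowgeneral} with "where $\bomega', M$ depend only on $\bomega$"; your observation that a single torsion coset $Y$ is produced by the construction and contained in all the $V(p_i)$ is a correct (if slightly more detailed than necessary) way of seeing it. The terminating iteration on strictly decreasing orders then closes the contrapositive as you describe.
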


Note that $K$ is a finitely generated extension of $\Q$, and so by standard results it follows that $K \cap \Q_{ab} \subset K$ is finitely generated over $\Q$.
Thus, $[K \cap \Q_{ab}: \Q]$ is finite.

\paragraph{A few explicit estimates}
To make effective use of \propref{simplealg}, one needs some estimate of a sufficiently large $N_0$. To do this, we can use some elementary computations
and the following lower bound (see e.g. \cite[Section 4.I.C]{Ribenboim}) where $\gamma$ is Euler's constant
\begin{equation} \phi(N) \geq \frac{N}{e^\gamma \log \log N + \frac{3}{\log \log N}}. \eqlab{philowb} \end{equation}

\begin{lemma} \lemlab{estone}
Let $d \geq 2$ and let $g_d(y) = \sqrt[d]{y}/(e^\gamma \log \log y + \frac{3}{\log \log y}) $. Then $g_d( (y \log y)^d) > y$ for
\begin{itemize}
\item $y \geq 256 d^4$ if $d \geq 4$
\item $y \geq 8500 $ if $d = 2, 3$
\end{itemize}
\end{lemma}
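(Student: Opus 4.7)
The plan is to reduce the claimed inequality $g_d((y\log y)^d) > y$ to an elementary comparison between $\log y$ and $\log\log y$ plus a constant depending on $d$, and then verify it at the threshold and extend it by monotonicity.

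First I would unwind the definition. Setting $z = (y\log y)^d$ gives $z^{1/d} = y\log y$, so
$$g_d(z) = \frac{y\log y}{e^\gamma \log\log z + 3/\log\log z},$$
and the desired bound $g_d(z) > y$ is equivalent to
$$\log y \;>\; e^\gamma \log\log z + \frac{3}{\log\log z}. \qquad (\ast)$$
Since $\log z = d(\log y + \log\log y)$, as long as $y \geq e$ one has $\log\log y \leq \log y$, and hence $\log z \leq 2d\log y$, giving the clean upper bound
$$\log\log z \;\leq\; \log(2d) + \log\log y.$$
Substituting into $(\ast)$, and using $e^\gamma < 1.782$, it suffices to verify
$$\log y \;-\; 1.782\,\log\log y \;>\; 1.782\,\log(2d) + \varepsilon, \qquad (\ast\ast)$$
where $\varepsilon$ absorbs the $3/\log\log z$ term, and is at most some small explicit constant (say $\varepsilon \leq 1$) once $z$ is large enough that $\log\log z \geq 3$, which happens well below the thresholds in the statement.

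Next I would verify $(\ast\ast)$ at the given thresholds. For $d \geq 4$ and $y = 256 d^4$, $\log y = 4\log 4 + 4\log d \approx 5.545 + 4\log d$, while $\log\log y \leq \log(5.545 + 4\log d)$, which is dominated once $d$ is not tiny, and the right side of $(\ast\ast)$ is roughly $1.235 + 1.782\log d$; the gap $\log y - 1.782\log\log y - 1.782\log(2d)$ is therefore positive at $y = 256d^4$ and grows with $d$. For $d \in \{2,3\}$ and $y = 8500$, direct substitution gives $\log y \approx 9.05$, $\log\log y \approx 2.20$, so the left side of $(\ast\ast)$ is about $5.13$, comfortably exceeding $1.782\log(2d) + \varepsilon \leq 1.782\log 6 + 1 \approx 4.19$.

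Finally I would extend from the threshold to all larger $y$ by monotonicity: the function $y \mapsto \log y - 1.782\log\log y$ has derivative $\frac{1}{y}\bigl(1 - \frac{1.782}{\log y}\bigr)$, which is strictly positive once $\log y > 1.782$, i.e. $y > 6$; so the left side of $(\ast\ast)$ is increasing in $y$ throughout the ranges in question, while the right side is constant in $y$. Hence $(\ast\ast)$, and therefore the original inequality, persists for every $y$ above the stated threshold.

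The main obstacle will be bookkeeping the two small corrections — the gap between $\log\log z$ and $\log\log y$ introduced by the factor $2d$ inside the logarithm, and the $3/\log\log z$ tail term — so that neither swamps the thresholds, particularly for $d = 2, 3$ where the bound $y \geq 8500$ is the tightest. For $d \geq 4$ the bound $256 d^4$ is generous and leaves a wide margin in $(\ast\ast)$.
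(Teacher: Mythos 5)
Your proposal is correct and essentially matches the paper's strategy: reduce $g_d((y\log y)^d) > y$ to a comparison between $\log y$ and $\log\log$-type quantities, absorb the $3/\log\log z$ tail into a constant, verify the inequality at the stated thresholds, and propagate to larger $y$ by a monotonicity/derivative argument. The paper streamlines the bookkeeping by first strengthening to $\sqrt{y} > de\log(y\log y)$ and bounding $h_d'(y) \le \tfrac{1}{2\sqrt{y}}$, whereas you stay in log-coordinates with $\log y - 1.782\log\log y$; the pieces you explicitly defer (pinning down $\varepsilon$, and checking that the $d\ge 4$ margin really is uniform in $d$) are exactly what the paper handles via the numerical bound $\tfrac{3}{\log\log y} < e^\gamma$ and the auxiliary function $\psi(d) = 8e\log 4d$.
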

\begin{proof}
First, we note that $\sqrt[d]{e^{e^{3/e^\gamma}}} < 8500$ for $d = 2,3$ and $\sqrt[d]{e^{e^{3/e^\gamma}}} < 256 d^4$
for $d \geq 4$. Thus, $\frac{3}{\log \log y} < e^\gamma$ for the specified values of $y$.

We compute:
$$ g_d( (y \log y)^d)  = y \frac{\log y}{e^\gamma \log \log (y \log y)^d + \frac{3}{\log \log (y \log y)^d}}.$$
For our domain of $y$-values, it therefore suffices to show $\log y > e^\gamma (\log \log (y \log y)^d + 1)$ or
equivalently $y^{1/e^\gamma} > e \log(y \log y)^d = d e \log (y \log y)$.
We will show the stronger inequality $y^{1/2} > d e \log (y \log y)$.

Set $h_d(y)= d e \log (y \log y)$. We first show $\frac{1}{2 \sqrt{y}} \geq h_d'(y)$ for $y \geq 81 d^2$ (which
includes our specified domain). We compute
$$h_d'(y) = \frac{de(\log y + 1)}{y\log y} = \frac{1}{2 \sqrt{y}} \left( \frac{2 d e}{\sqrt{y}} \right) \left( \frac{\log y + 1}{\log y} \right)
\leq  \frac{1}{2 \sqrt{y}} \left( \frac{2 d e}{9 d} \right) \left(\frac{3}{2}\right) \leq  \frac{1}{2 \sqrt{y}} .$$

It remains to show that $h_d(256 d^4) < \sqrt{ 256 d^4} = 16 d^2$ for all $d \geq 4$ and
$h_d(8500) < \sqrt{8500}$ for $d = 2,3$. The latter can be checked by direct computation, so we prove the former.
Note that $h_d(y) \leq 2 d e \log y$. By computation of derivatives (in $d$), the quantity $\psi(d) = 2 e \log (4d)^4 = 8 e \log 4d$
is seen to grow more slowly than $16 d$ for $d \geq 2$, and a direct computation shows $\psi(4) < 64$.
Thus $\psi(d) < 16 d$ for all $d \geq 4$ and $h_d(256 d^4) < 16 d^2$ for all $d \geq 4$.
\end{proof}
We are now ready to state and prove a more explicit version of \theoref{torsionpoints} from
the introduction.
\begin{cor} \corlab{esttwo}
Let $p_1, \dots, p_n \in \C[x_1^{\pm 1}, \dots, x_d^{\pm 1}]$ and set $\hat C =  C_d  [K \cap \Q_{ab}: \Q] \max(\tdeg(p_i))$ where
$K \subset \C$ is the field generated by $\Q$ and the coefficients of $p_1, \dots, p_n$.  Let $N_0 = \max(8500, (\hat C \log \hat C)^d)$ if $d =2,3$
and $N_0 = \max(256 d^4, (\hat C \log \hat C)^d)$ if $d \geq 4$.
If $\bomega \notin V(p_1, \dots, p_n)$
for all torsion points $\bomega \neq \vec 1$ of order $N \leq N_0$, then then $V(p_1, \dots, p_n)$ cannot contain any torsion point except $\vec 1$.
\end{cor}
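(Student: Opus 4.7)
The plan is to apply \propref{simplealggeneral}, which only requires exhibiting an $N_0$ such that $N > N_0$ implies $\phi(N) > \hat{C} N^{(d-1)/d}$; my task is therefore to check that the explicit $N_0$ specified in the statement has this property. Dividing through by $N^{(d-1)/d}$ and invoking the lower bound \eqref{philowb} on Euler's totient gives
\[
\frac{\phi(N)}{N^{(d-1)/d}} \;\geq\; \frac{N^{1/d}}{e^{\gamma} \log\log N + \tfrac{3}{\log\log N}} \;=\; g_d(N),
\]
so it suffices to establish $g_d(N) > \hat{C}$ for every $N > N_0$.

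A preliminary observation is that $g_d$ is eventually monotonically increasing, since $N^{1/d}$ grows polynomially while the denominator is only doubly-logarithmic. A short derivative calculation in the spirit of the one inside the proof of \lemref{estone} pins down an explicit $N$-threshold past which $g_d$ is strictly increasing, and one checks this threshold is at most $y_0$ (where $y_0 = 8500$ for $d \in \{2,3\}$ and $y_0 = 256 d^{4}$ for $d \geq 4$). Consequently, once I have the desired inequality at $N_0$, it automatically propagates to all $N > N_0$, and the entire problem reduces to verifying $g_d(N_0) > \hat{C}$.

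I split into two cases based on the $\max$ defining $N_0$. If $\hat{C} \geq y_0$ then $N_0 = (\hat{C} \log \hat{C})^d$, and \lemref{estone} applied with $y = \hat{C}$ gives $g_d(N_0) > \hat{C}$ immediately. If instead $\hat{C} < y_0$, then $N_0 \geq y_0$, and I apply \lemref{estone} with $y = y_0$ (or, if $(\hat{C} \log \hat{C})^d$ happens to dominate, an intermediate value) to conclude that $g_d$ exceeds the bound $\hat{C} < y_0$ by the time $N$ reaches $N_0$. The main obstacle is the careful bookkeeping in this second case: one must match the two arguments of the $\max$ with the monotonicity threshold of $g_d$, and each numerical estimate is elementary but must be done precisely so that the explicit constants in the statement come out correctly.
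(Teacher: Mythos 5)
Your route tracks the paper's: both reduce, through \propref{simplealggeneral} and the totient bound \eqref{philowb}, to showing $g_d(N) > \hat C$ for $N > N_0$, and both hinge on \lemref{estone}; your appeal to eventual monotonicity of $g_d$ (rather than the paper's substitution $N = (y\log y)^d$) is a cosmetic difference. Case~1, $\hat C \geq y_0$, you handle correctly and it agrees with the paper.

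Case~2, $\hat C < y_0$, which you flag as ``the main obstacle'' and leave as bookkeeping, does not in fact close. \lemref{estone} only controls $g_d$ at arguments of the form $(y\log y)^d$ with $y \geq y_0$; applying it ``with $y = y_0$'' therefore gives information about $g_d\bigl((y_0\log y_0)^d\bigr)$, which when $\hat C < y_0$ is far larger than $N_0 = \max\bigl(y_0,(\hat C\log\hat C)^d\bigr)$ and says nothing about $g_d(N_0)$. Indeed your target $g_d(N_0) > \hat C$ can simply be false: with $d = 2$ and $\hat C = 100$ one has $N_0 = (\hat C\log\hat C)^2 \approx 2.12\times 10^5$, but $g_2(N_0) \approx 81 < \hat C$. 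Nor is this an artifact of the lower bound: for $N = 240240 = 2^4\cdot 3\cdot 5\cdot 7\cdot 11\cdot 13 > N_0$ one computes $\phi(N) = 46080 < 100\sqrt{N} \approx 49014$, so the implication $N > N_0 \Rightarrow \phi(N) > \hat C N^{(d-1)/d}$ demanded by \propref{simplealggeneral} fails for the stated $N_0$. For what it is worth, the paper's own proof shares this gap: it invokes \lemref{estone} at the $y$ solving $N = (y\log y)^d$ while verifying only $y > \hat C$, never the hypothesis $y \geq y_0$. The repair is to take $N_0 = \bigl(\max(y_0,\hat C)\log\max(y_0,\hat C)\bigr)^d$, which forces the auxiliary $y$ above both thresholds simultaneously, after which either your monotonicity argument or the paper's substitution goes through.
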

\begin{proof}
It suffices to prove that $N > N_0$ implies $\phi(N) > D N^{(d-1)/d}$. Because of \eqref{philowb}, it is sufficient that
$g_d(N) > D$. For any $N > N_0$, there is a unique $y > D$ such that $N = (y \log y)^d$, and by \lemref{estone},
$g_d(N) > y > D$.
\end{proof}

\subsection{The algorithm}\seclab{thealgorithm}
From \propref{simplealggeneral} and \corref{esttwo}, there is a clear path for designing a ``brute force'' algorithm for checking infinitesimal ultrarigidity
of a framework. Here, we outline the algorithm, check correctness, and compute the running time. For simplicity, we will describe the algorithm for
the fixed lattice and rational configurations. We discuss modifications of the algorithm for more general input at the end of the section.

The input for the algorithm is a colored graph $(G, \bgamma_{ij})$ and framework $G(\vec p, \vec L)$, so for our purposes we will evaluate
the running time in terms of $m$ and $D = \sum_{ij} \| \gamma_{ij} \|_1$. Moreover, we will work under the assumption
of some fixed dimension $d$. However, it should be noted that the constants can be quite large and grow exponentially in $d$.
We will show that the running time is polynomial in $m$ and $D$. Since the input size required for $\bgamma$ is
$\log D$, our algorithm is technically exponential time. \\

\noindent \textbf{Steps in Algorithm:} \\

\noindent \textbf{I}. Compute $D =  \sum_{ij} \| \gamma_{ij} \|_1$ and compute $N_0$ such that $N > N_0 \Rightarrow \phi(N) >  C_d N^{d-1/d} D$.
From \lemref{estone}, letting $\hat C = C_d D$, we can use $N_0 = \max(8500, (\hat C \log \hat C)^d)$ for $d = 2, 3$ and $N_0 = \max(256d^4, (\hat C \log \hat C)^d)$
for $d \geq 4$.\\

\noindent \textbf{II}. For each integer $N$ from $1$ to $N_0$, do the following.
\begin{itemize}
\item[(a)] Check if $\phi(N) > \hat C N^{(d-1)/d}$ and skip the next computations for $N$ if true.
\item[(b)] Compute $\div(N)$, the set of divisors of $N$.
\item[(c)] Compute the minimum polynomial $m_N(x)$ for $\zeta$ the primitive $N$th root of unity.
\item[(d)] For each $d$-tuple $\bomega = (\zeta^{k_1}, \zeta^{k_2}, \dots, \zeta^{k_d})$ with $k_1 \in \div(N)$ and $0 \leq k_i \leq N$, do the following
\begin{itemize}
\item[(i)] Construct the matrix $\pr_{\bomega}(S)$ where elements of $\Q(\zeta)$ are represented as vectors in the $\Q$-coordinate system from the basis
$\{1, \zeta, \zeta^2, \dots, \zeta^{\phi(N)-1}\}$.
\item[(ii)] Compute the rank of the determinant of $\pr_{\bomega}(S)$. Stop running if it is not full rank and otherwise keep running.
\end{itemize}

\end{itemize}

\noindent \textbf{III} If the algorithm ran through step II for $N$ up to $N_0$, then the framework is infinitesimally ultrarigid and otherwise flexible.

\paragraph{Correctness:}

This follows in a straightforward manner from \propref{simplealg} once one verifies that $\widetilde{\deg}$ of any minor is at most $D$.
The only other point which may require additional explanation is the claim that
we only need to check torsion points $\bomega$ where $k_1$ is a divisor of $N$. However, since we assumed the configuration is rational, the
minors are rational polynomials, and so they evaluate to $0$ at any torsion point $\bomega$ if and only if they do so at any Galois conjugate.
Every Galois orbit contains a torsion point satisfying $k_1 \in \div(N)$.

\paragraph{Running Time:}
We evaluate the running time for each step. As we will see, step II.d dominates rather strongly, so we will give somewhat loose estimates for the other steps.\\

\noindent \textbf{I} The value $D$ is computed from adding positive integers and so must take time $O(D)$. The computation of $N_0$ occurs in constant time.\\

\noindent \textbf{II.ab} The value $\phi(N)$ can be computed in time at most $O(N)$ from a prime factorization which itself can be done in $O(N)$ time.
The divisors $\div(N)$ are computable in time $O(N)$.\\

\noindent \textbf{II.c} Using the prime factorization of $N$, and the following facts, $m_N(x)$ can be computed in time at worst $O(N^2 \log N)$
\begin{itemize}
\item $m_k(x) = x^{k-1} + x^{k-2} + \dots X + 1$ if $k$ is prime
\item $m_{qk}(x) = m_k(x^q)/m_k(x)$ if $q$ is a prime not dividing $k$
\item $m_{qk}(x) = m_k(x^q)$ if $q$ is a prime dividing $k$.
\end{itemize}

\noindent \textbf{II.d preprocessing} Since we represent elements of $\Q(\zeta)$ as polynomials in $\zeta^0, \zeta^1, \dots, \zeta^{\phi(N)-1}$, multiplications in general
take time $O(\phi(N)^2)$ (with $O(\phi(N)^2)$ arithmetic operations and $O(\phi(N)^2)$ for reduction using $m_N(x)$). Before computing the ranks over
various order $N$ torsion, we compute beforehand the following.
\begin{itemize}
\item $\pr_{\zeta^{k_1}, 1, \dots, 1}(\hat S)$ for all $k_1 \in \div(N)$. Fix some $k_1$. For each row in $\hat S$, we must compute at most one algebraic number of the form
$\zeta^{k_1 \ell}$ where $\ell \leq D$. Using $\zeta^N = 1$, we can assume $0 \leq k_1 \ell < N$, and so each power $\zeta^{k_1 \ell}$ can be computed in
time $O(\phi(N)^2 \log N)$. Computing all the matrices $\pr_{\zeta^{k_1}, 1, \dots, 1}(\hat S)$ thus takes time $O(m \sigma_0(N) \phi(N)^2 \log N)$ where
$\sigma_0(N)$ is the number of divisors of $N$.
\item $\zeta^k$ for all $0 \leq k < D$. This can be done in time $O(D \phi(N)^2)$.
\end{itemize}

\noindent \textbf{II.d.i} We progress through the $d$-tuples $(k_1, \dots, k_d)$ in lexicographical order. Therefore each matrix was either precomputed or
can be obtained from the previous by multiplying half the entries in each row by some $\zeta^k$ for $0 \leq k < D$. Since the $\zeta^k$ were preprocessed,
this takes time at most $O(m \phi(N)^2)$ for each torsion point.\\

\noindent \textbf{II.d.ii} Computing the rank requires at most $O(m^3)$ multiplications in the field $\Q(\zeta)$ and at most $m^3$ additions. Thus computing
the rank for each torsion point takes time $O(m^3 \phi(N)^2)$.\\

\noindent \textbf{II.d total} The steps II.d.i and II.d.ii must be performed $\sigma_0(N) N^{d-1}$ times so they alone
require time $O(m^3 \sigma_0(N) N^{d-1} \phi(N)^2)$. This dominates the first prepocessing step so altogether the running time is
$O( (m^3 \sigma_0(N) N^{d-1} +D) \phi(N)^2 )$. Recall that it was checked in I.a that $\phi(N) \leq C_d D N^{(d-1)/d} $ and using the
(significant) overestimate $\sigma_0(N) < N$ we obtain a upper bound on running time of $O( (m^3 N^d + D) D N^{(2d-2)/d})$.\\

\noindent \textbf{Total running time} It is easy to see that step II.d dominates all other running times. Since it must be done for each positive $N$ up to
$N_0$, the running time for the algorithm is $O( m^3 D N_0^{d+1 + 2 \frac{d-1}{d}} + D^2 N_0^{2 \frac{d-1}{d} + 1}) = O(m^3 ( D N_0^{d+1 + 2 \frac{d-1}{d}})) =
O(m^3 D^{d^2 + 3d-1} (\log D)^{d^2 + 3d-2})$.

\paragraph{Configurations with coefficients in number fields}
We leave it for the reader to extend the above algorithm to arbitrary coefficient fields $K$. However, we remark that in the case of number fields,
the only changes are that higher order torsion points may need to be checked (\corref{esttwo}) and rank computations require
multiplications in $K(\zeta)$. The latter requires finding minimal polynomials of $\zeta$ over $K$ or equivalently factoring cyclotomic polynomials
over $K$, and that can be done via the algorithm in e.g. \cite{R04}.

\paragraph{Alternative computational methods}
The above algorithm is an exact algorithm guaranteed to work. However, performing exact calculations in $\Q(\zeta)$ does impose
some computational cost. One can also approximate $\zeta$ numerically and attempt to determine rank in which case
step II.c and the preprocessing in step II.d can be avoided and steps II.d.i and II.d.ii can be completed in time $O(m^3)$.
Consequently a numerical algorithm will run in time $O(m^3 N_0^{d+1}) = O(m^3 (D \log D)^{d^2+d})$ There is, however, no guarantee of correctness
without some a priori guarantee on the accuracy of the rank computations.

Another approach to speeding up rank computations is to work ``mod $p$'', i.e. reduce matrix entries to the finite field $\F_p(\zeta)$. There,
according to e.g. \cite{GaoPanario}, multiplication of elements can be computed in time $O(n \log n \log \log n)$. Yet another possibility is that one may compute the minors at the beginning,
and then determine if they evaluate to $0$ at torsion points using the algorithm in \cite{ChengVyalvi}.

\paragraph{An optimal $C_d$}
As the reader may notice, the constant $C_d$ grows rather quickly with dimension. Moreover, the impact on computation time is roughly a factor
of $C_d^{d^2}$ which can be significant even for small $d$. While we have given some thought to optimizing $C_d$, it would not be surprising
if an improvement could be made, and we do not know if $C_d$ is optimal for \lemref{deeptoshallow} and \lemref{deeptoshallowgeneral}
even in any asymptotic sense.

\section{Combinatorial results} \seclab{proof}
In this section, we prove Theorems \theorefX{dtwocomb} and \theorefX{dtwocombflfa}. All the
required definitions are given in this section.
The key ingredients are a linear representation of the
\Gamd{d} matroid (defined below in  \secref{linrepgamd}) and a theorem on
direction networks from
\cite{MT13}.

\subsection{Combinatorial types of colored graphs} \seclab{graphmats}
To describe our combinatorial classes of colored graphs,
we must understand the group associated to a colored graph.
We recall the construction only in the case of $\Gamma$ abelian
although it can be generalized to arbitrary groups. See e.g. \cite{MT13b}.
Suppose $(G, \bgamma)$ is a graph colored by an abelian group $\Gamma$.
For any oriented cycle $C$ of $G$, say $C \pcross ij$ if $C$ crosses $ij$ in the same orientation and
$C \ncross ij$ otherwise, and moreover set
$$\rho(C) = \sum_{C \pcross ij} \gamma_{ij} - \sum_{C \ncross ij} \gamma_{ij}.$$
We can extend $\rho$ uniquely to a map $\HH_1(G, \Z) \to \Gamma$. By abuse of notation, we will denote the image $\rho(G)$; this
is the group associated to the colored graph $(G, \bgamma)$.

Let $(G, \bgamma)$ be a $\Z^2$-colored graph with $m$ edges and $n$ vertices. Then, $(G, \bgamma)$
is {\it colored-Laman} if
\begin{itemize}
\item $m = 2n+1$
\item for any subgraph $G'$ on $n'$ vertices, $m'$ edges, and $c'$ components,
$m' \leq 2n' + 2\rk(\rho(G')) - 2c' -1$.
\end{itemize}
The colored graph $(G, \bgamma)$ is {\it colored-Laman-sparse} if it satisfies only the
inequality or, equivalently, is a subgraph
of a colored-Laman graph. A colored graph $(G, \bgamma)$ is a {\it colored-Laman circuit}
if it is an edge-wise minimal violation
of the above condition. A colored graph $(G, \bgamma)$ is {\it colored-Laman-spanning} if
it contains a vertex-spanning colored-Laman subgraph.

We say $(G, \bgamma)$ is a {\it Ross graph} if
\begin{itemize}
\item $G$ is a $(2,2)$-graph,
\item any subgraph $G'$ on $n'$ vertices and $m' > 2n'-3$ edges satisfies $\rho(G') \neq 0$.
\end{itemize}
Recall that, in general, $G$ is a $(k, \ell)$ graph if $m = kn - \ell$ and $m' \leq kn'-\ell$ for all subgraphs $G' \subset G$.
In particular, a {\it Laman graph} is a $(2,3)$-graph. Note that ``circuit,'' ``-spanning,'' and ``-sparse'' are similarly defined
for Ross graphs and $(k,\ell)$ graphs. We say $(G, \bgamma)$ is a {\it unit-area-Laman graph} if $m = 2n$, it is colored-Laman-sparse, and any subgraph $G' \subset G$
with $\rk(\rho(G')) = 2$ satisfies the strict inequality $m' < 2n' + 2\rk(\rho(G')) - 2c' -1$.

Recall that a {\it map-graph} is a graph where
each connected component has exactly one cycle. In particular, map-graphs have $m = n$ edges.
A $\Gamma$-colored graph $(G, \bgamma)$ is {\it \Gamd{1} } if it is a map-graph such that $\rho(C) \neq 0$ for each
cycle $C$ in $G$. (The collection of \Gamd{1} graphs is sometimes also called a frame matroid.)
We say that a $\Gamma$-colored graph $(G, \bgamma)$ is {\it \Gamd{d}}
if it is the edge-disjoint union of $d$ spanning \Gamd{1} graphs.

\begin{remark}
For $d = 1,2$ and $\Gamma$ finite cyclic, the set of \Gamd{d} graphs is the same as the set of cone-$(d,d)$ graphs in \cite{MT13a}.
\end{remark}

\Gamd{d} graphs can also be characterized by sparsity counts. For a connected $\Z/N\Z$-colored graph $(G, \bgamma)$
with a unique cycle $C$, we set $T(G) = 1$ if $\rho(C) = 0$ and $T(G) = 0$ if $\rho(C) \neq 0$. By \cite{MT13a}, a graph
$(G, \bgamma)$ on $m$ edges and $n$ vertices is \Gamd{1} if and only if
\begin{itemize}
\item $m = n$
\item for all subgraphs $G'$ on $m'$ edges and $n'$ vertices,
$$m' \leq n' -  \sum_{\substack{ \text{connected} \\\text{components } G_i \subset G}} T(G_i).$$
\end{itemize}
Using Edmonds' theorem on matroid unions \cite{ER66,E65b},
we can characterize \Gamd{d} graphs as follows.
\begin{lemma} \lemlab{sparsecond}
A $\Gamma$-colored graph $(G, \bgamma)$ on $m$ edges and $n$ vertices is \Gamd{d} if and only if
\begin{itemize}
\item $m = dn$
\item for all subgraphs $G'$ on $m'$ edges and $n'$ vertices,
$$m' \leq dn' - d \sum_{\substack{ \text{connected} \\\text{components } G_i \subset G}} T(G_i).$$
\end{itemize}
\end{lemma}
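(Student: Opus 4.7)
The plan is to apply Edmonds' matroid union (partition) theorem to the $\Gamd{1}$ matroid on the edge set of $(G, \bgamma)$. First I would identify the rank function of this matroid. From the sparsity characterization of $\Gamd{1}$ stated just before the lemma, an edge subset $A$ is $\Gamd{1}$-independent iff $|A'| \leq n(A') - \sum_i T(A'_i)$ for every $A' \subseteq A$ (with the sum over connected components of the subgraph induced by $A'$). A maximum independent subset of $A$ is obtained by taking, within each connected component of the subgraph $(V(A), A)$, a spanning tree together with one extra edge closing an unbalanced cycle whenever such a cycle exists. Hence the rank function is
$$r_M(A) = n(A) - \sum_{i} T(G_i),$$
where $n(A)$ counts the vertices touched by $A$ and the sum runs over the connected components $G_i$ of the subgraph $(V(A), A)$.

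Next I would translate the definition of $\Gamd{d}$ into a pure matroid partition statement. Since any $\Gamd{1}$-independent set $F$ satisfies $|F| \leq n(F) \leq n$, with equality only when $F$ is a spanning $\Gamd{1}$ subgraph of $G$, a partition of $E(G)$ into $d$ $\Gamd{1}$-independent sets together with the equality $|E(G)| = dn$ forces each part to be a spanning $\Gamd{1}$ subgraph, and conversely. So $(G, \bgamma)$ is $\Gamd{d}$ iff $m = dn$ and $E(G)$ admits a partition into $d$ independent sets of the $\Gamd{1}$ matroid $M$.

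Finally, Edmonds' matroid partition theorem (applied to $d$ copies of $M$) states that $E(G)$ partitions into $d$ independent sets of $M$ iff $|A| \leq d \cdot r_M(A)$ for every $A \subseteq E(G)$. Substituting the rank formula above yields exactly the condition $m' \leq dn' - d \sum_i T(G_i)$ for every subgraph, which combined with $m = dn$ gives the claimed characterization.

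The main point requiring some care is the rank computation for the frame matroid (that the two contributions per component — a spanning tree, plus an unbalanced-cycle edge when available — are simultaneously achievable and that the resulting set is globally independent); everything else is a direct application of Edmonds' theorem. One small bookkeeping remark: the statement of the lemma quantifies over all subgraphs $G'$, but isolated vertices in $G'$ only enlarge the right-hand side and weaken the inequality, so it suffices to check it on the vertex sets touched by edges, which is precisely the form Edmonds' theorem delivers.
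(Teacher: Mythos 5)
Your proposal is correct and follows exactly the route the paper indicates: the paper itself states the lemma immediately after remarking ``Using Edmonds' theorem on matroid unions,'' without spelling out the details, and your write-up is the natural elaboration of that one-line justification (frame-matroid rank function plus Edmonds' partition theorem applied to $d$ copies of the $\Gamd{1}$ matroid).
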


\subsection{Linear representations of the \Gamd{d} matroid} \seclab{linrepgamd}
Let $\Gamma = \Z/N\Z$, and let $(G, \bgamma)$ be a $\Gamma$-colored graph. Over all
edges $ij \in E(G)$, let $\vec v_{ij} = (a_{ij}^1, \dots, a_{ij}^d)$ where all $a_{ij}^k$ are algebraically independent
elements in some field extension of $\Q$. Let $\zeta$ be a primitive $N$th root of unity. Then, we define
$\vec M_{N, d,d}(G)$ to be the matrix with one row for each edge $ij$ as follows:
$$\begin{array}{cccccccc}      &   &         &  i                  &           & j                                                  &          & \\
ij & ( & \dots & -\vec v_{ij} & \dots & \zeta^{\gamma_{ij}} \vec v_{ij} & \dots  & )
\end{array} $$
\begin{remark}
Note that $\vec M_{N,d,d}(G)$ depends also on the choice of $\zeta$. However, \lemref{linrepgamd} below
holds for all choices.
\end{remark}

The key lemma is the following which is a special case of \cite[Corollary 5.5]{TaniGain}.
\begin{lemma} \lemlab{linrepgamd}
A $\Z/N\Z$-colored graph $(G, \bgamma)$ with $m = dn$ edges is \Gamd{d} if and only if
$\vec M_{N, d,d}(G)$ has rank $dn$.
\end{lemma}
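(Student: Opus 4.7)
The plan is to reduce to the $d=1$ case and then lift via matroid union. First I would establish the scalar case: the $m \times n$ matrix $\vec M_{N,1,1}(G)$ whose row for $ij$ has $-1$ in column $i$ and $\zeta^{\gamma_{ij}}$ in column $j$ represents the \Gamd{1} matroid on $E(G)$ over $\C$. The combinatorial core is that for an oriented cycle $C$, the rows indexed by $E(C)$ admit a nontrivial dependence iff the telescoping cycle product $\prod_{C \pcross ij}\zeta^{\gamma_{ij}} \prod_{C \ncross ij}\zeta^{-\gamma_{ij}} = \zeta^{\rho(C)}$ equals $1$, i.e.\ $\rho(C) = 0$ in $\Z/N\Z$. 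Combined with an ear-decomposition argument (or directly quoting \cite{TaniGain}), this identifies the row matroid with \Gamd{1}. Rescaling row $ij$ by an independent scalar $a_{ij}$ does not affect the matroid.

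Second, for general $d$ I would exhibit $\vec M_{N,d,d}(G)$ as a generic combination of $d$ column-block copies of $\vec M_{N,1,1}(G)$. Index columns of $\vec M_{N,d,d}(G)$ by pairs $(v,k)$, $v \in V(G)$, $k\in\{1,\dots,d\}$. For each fixed $k$, the submatrix on columns $\{(v,k) : v \in V(G)\}$ is $\vec M_{N,1,1}(G)$ with row $ij$ scaled by $a_{ij}^k$, so it still represents \Gamd{1}. Since the parameters $a_{ij}^k$ are algebraically independent across both edges and copies, the full matrix is a generic instance of the "$d$ parallel copies with shared rows" construction, whose column matroid is the $d$-fold matroid union of \Gamd{1}. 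By the definition of \Gamd{d} this union is exactly \Gamd{d}, and the rank-$dn$ condition (with $m = dn$) is equivalent to $E(G)$ being an edge-disjoint union of $d$ spanning \Gamd{1} subgraphs.

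The main obstacle is making the generic-union-equals-matroid-union step rigorous. The cleanest route is to cite \cite[Corollary 5.5]{TaniGain} after checking that $\vec M_{N,d,d}(G)$ fits its setup. For a self-contained argument, the easy direction is by specialization: if $E(G) = E_1 \sqcup \cdots \sqcup E_d$ is a \Gamd{d} decomposition, set $\vec v_{ij} = e_k \in \C^d$ for $ij \in E_k$, which makes the specialized $\vec M_{N,d,d}(G)$ block-diagonal across the $d$ column blocks, with the $k$-th block equal to $\vec M_{N,1,1}(G)$ restricted to $E_k$ and hence of rank $|E_k|=n$; this forces the generic rank to be $dn$. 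For the converse direction, I would use Edmonds' matroid union theorem to compute the rank of the $d$-fold union of the \Gamd{1} matroid and argue that the generic rank of $\vec M_{N,d,d}(G)$ is bounded above by that union rank; if $E(G)$ is not \Gamd{d}, this upper bound is strictly less than $dn$, so the full matrix cannot have rank $dn$. Either route completes the equivalence.
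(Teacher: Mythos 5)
Your proposal is correct. The paper's proof is extremely terse: it simply cites \cite[Corollary 5.5]{TaniGain} with the observation that, in Tanigawa's notation, one takes $\F = \C$ and the representation $\Z/N\Z \to \GL_d(\C)$, $\gamma \mapsto \zeta^{\gamma}\Id$, so that the rows of $\vec M_{N,d,d}(G)$ are exactly Tanigawa's vectors $x_{e,\psi}$. You identify this citation as the cleanest route, so to that extent your primary approach coincides with the paper's; your self-contained sketch goes further and is also sound. The $d = 1$ telescoping computation identifies the row matroid of $\vec M_{N,1,1}(G)$ with the \Gamd{1} matroid, and the block-diagonal specialization $\vec v_{ij} = e_k$ for $ij \in E_k$ gives the implication from \Gamd{d} to rank $dn$. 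For the converse, the step you leave implicit is the crux: each of the $d$ column blocks of $\vec M_{N,d,d}(G)$ is a row-rescaled copy of $\vec M_{N,1,1}(G)$ and hence represents \Gamd{1}, so for any $F \subseteq E$ the rows indexed by $F$ have rank at most $d \cdot r(F)$, where $r$ is the \Gamd{1} rank function; therefore $\rk \vec M_{N,d,d}(G) \leq \min_{F \subseteq E} ( |E \setminus F| + d\, r(F) )$, the $d$-fold matroid-union rank, which is less than $dn$ whenever $(G,\bgamma)$ fails the sparsity count of \lemref{sparsecond}. With that bound spelled out, your self-contained argument is a correct rederivation, in this special case, of the generic-concatenation-realizes-matroid-union principle underlying Tanigawa's corollary, which the paper sidesteps by citation.
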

\begin{proof}
This is a straightforward reinterpretation of Corollary 5.5 of \cite{TaniGain}. Note that in the notation of that paper
$\F = \C$ and $\rho: \Z/N\Z \to \GL(\C^d)$ is the map $\gamma \mapsto \zeta^{\gamma} \Id$. Moreover, the vectors
$x_{e, \psi}$ are precisely the rows of $\vec M_{N,d,d}(G)$.
\end{proof}

\subsection{Rank-preserving color changes}
Recall that the transition from the infinite graph $(\tilde G, \varphi)$ to a colored quotient graph $(G, \bgamma)$ requires
a choice of representative vertex for each $\Z^d$ vertex orbit in $\tilde G$. Changing the representative can result in a
change of the edge colors. For a given realization $\tilde G(\vec p, \vec L)$, such a change will alter the rigidity matrix, but since
ultrarigidity is a function only of the framework, the dimension of $\Lambda$-respecting motions is unchanged. We can, however,
describe such color changes without any reference to $\tilde G$. For any $(G, \bgamma)$, we say $(G', \bgamma')$ is an {\it elementary
valid color change} of $(G, \bgamma)$ if $G = G'$ as graphs and there is a vertex $k$ and $\gamma \in \Gamma$ such that
\begin{itemize}
\item[(1)] $\gamma_{ij}' = \gamma_{ij}$ if $i \neq k \neq j$
\item[(2)] $\gamma_{ik}' = \gamma_{ik} \gamma^{-1}$ for all (oriented) edges $ik$
\item[(3)] $\gamma_{kj}' = \gamma \gamma_{kj}$ for all (oriented) edges $kj$
\item[(4)] $\gamma_{kk}' = \gamma_{kk} $ for all loops $kk$
\end{itemize}
(Note that the analogous condition to (4) when $\Gamma$ is nonabelian is $\gamma_{kk}' = \gamma \gamma_{kk} \gamma^{-1}$.)
We say $(G', \bgamma')$ is a {\it valid color change} of $(G, \gamma)$ if it can be obtained from $(G, \bgamma)$ by a sequence of
elementary valid color changes.

\begin{lemma} \lemlab{colorchangefromcover}
Suppose $(G, \bgamma)$ and $(G', \bgamma')$ are two colored quotient graphs associated to the same infinite graph $(\tilde G, \varphi)$.
Then $(G', \bgamma')$ is a valid color change of $(G, \bgamma)$.
\end{lemma}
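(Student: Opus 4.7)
The plan is to observe that the colored quotient graph $(G,\bgamma)$ depends entirely on the choice of a vertex representative $\tilde i \in V(\tilde G)$ for each orbit $i \in V(G)$, and that an elementary valid color change at a vertex $k$ with group element $\gamma$ corresponds exactly to replacing the representative $\tilde k$ by $\gamma\cdot \tilde k$. The lemma then reduces to showing that any transition from one representative system to another factors into a finite sequence of single-vertex replacements.

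First I would note that for a fixed $(\tilde G,\varphi)$, the directed graph $G = \tilde G/\Gamma$ (one vertex per orbit, one edge per edge orbit, with orientations inherited from a common system) is intrinsic, so only the edge labels can differ between $(G,\bgamma)$ and $(G',\bgamma')$. Because $\varphi$ is free, for any two choices $\tilde i, \tilde i'$ of representatives of the same orbit there is a unique $\beta_i \in \Gamma$ with $\tilde i' = \beta_i \cdot \tilde i$, so the change of presentation is encoded by a function $\beta : V(G) \to \Gamma$. Enumerating $V(G) = \{k_1,\dots,k_n\}$ and flipping representatives from the first choice to the second one vertex at a time yields a sequence of colored graphs interpolating $(G,\bgamma)$ and $(G',\bgamma')$ in which consecutive terms differ only at a single vertex.

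It therefore suffices to check the single-vertex step. Fix $k$ and set $\gamma := \beta_k$, so $\tilde k' = \gamma \cdot \tilde k$. For an edge $ik$ with $i \neq k$, the unique lift with head $\tilde i$ has tail $\gamma_{ik}\cdot \tilde k = (\gamma_{ik}\gamma^{-1})\cdot \tilde k'$, so $\gamma_{ik}' = \gamma_{ik}\gamma^{-1}$. For an edge $kj$ with $j \neq k$, translating the original lift by $\gamma$ produces a lift with head $\tilde k'$ and tail $\gamma\gamma_{kj}\cdot \tilde j$, so $\gamma_{kj}' = \gamma \gamma_{kj}$. For a loop $kk$ the same computation yields tail $\gamma\gamma_{kk}\gamma^{-1}\cdot \tilde k'$, which equals $\gamma_{kk}\cdot \tilde k'$ because $\Gamma$ is abelian, giving $\gamma_{kk}' = \gamma_{kk}$. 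Edges touching neither copy of $k$ are obviously unaffected. These four cases match conditions (1)--(4) in the definition of an elementary valid color change.

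The main (essentially only) obstacle is bookkeeping: one must be careful with the head/tail convention from the beginning of \secref{infmot} and with the loop case, where commutativity of $\Gamma$ enters. Nothing deeper is required, since the group of all representative changes is manifestly generated by single-vertex modifications, and each such modification is, by the calculation above, an elementary valid color change.
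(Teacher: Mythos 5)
Your proof is correct and takes essentially the same approach as the paper's, which states tersely that ``the only difference arises from choices of vertex representatives'' and that ``changing one vertex representative has exactly the effect of an elementary change.'' You have simply spelled out the one-vertex-at-a-time interpolation and carried out the head/tail computation that the paper leaves to the reader, including the loop case where commutativity produces condition (4); the details check out against the head/tail convention from \secref{infmot}.
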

\begin{proof} The only difference arises from choices of vertex representatives. The effect of changing one vertex representative has exactly
the effect of an elementary change.
\end{proof}

While this easily implies the rigidity matrices for each colored graph have equivalent kernels, we want to find the same equivalence for slightly
more general matrices. In the rigidity matrix, the vectors $\vec d_{ij}$ must arise from some framework and are not completely arbitrary. We analyze
the kernels of the matrices $S_{G, \vec d}$ and $\hat{S}_{G, \vec d}$ for \emph{arbitrary} vectors $\vec d_{ij}$. We view the latter matrix
as a $\R[\Gamma]$-linear map $\mc X^{dn} \to \mc X^m$.

\begin{lemma} \lemlab{colchangepresrank}
Let $\vec d_{ij} \in \R^d$ be arbitrary vectors and let $(G, \bgamma)$ be a $\Z^d$-colored graph. If $(G', \bgamma')$ is a valid color change
of $(G, \bgamma)$, then $\ker(S_{G, \vec d}) \cong \ker(S_{G', \vec d})$ and for all finite index $\Lambda < \Gamma$
$$\ker(\hat{S}_{G, \vec d}) \cap \mc X_{\Lambda}^{dn}   \cong \ker(\hat{S}_{G', \vec d})  \cap \mc X_{\Lambda}^{dn} .$$
\end{lemma}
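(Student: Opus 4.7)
The plan is to reduce the claim to a single elementary valid color change and then exhibit an explicit invertible linear map on the ambient module intertwining the two rigidity maps. Since a general valid color change is by definition a composition of elementary moves, an induction on the number of moves reduces the statement to handling one elementary change at a vertex $k$ by $\gamma \in \Gamma$, so that $\gamma_{ik}' = \gamma_{ik}\gamma^{-1}$, $\gamma_{kj}' = \gamma\gamma_{kj}$, $\gamma_{kk}' = \gamma_{kk}$, and all other colors are unchanged.

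For the real matrix $S_{G,\vec d}$, I will define
$$\phi: \R^{dn}\times\Hom(\Gamma,\R^d) \to \R^{dn}\times\Hom(\Gamma,\R^d),\qquad \phi(\vec v,\vec M)=(\vec v',\vec M),$$
where $\vec v'_k = \vec v_k + \vec M(\gamma)$ and $\vec v'_i = \vec v_i$ for $i \neq k$. This is an $\R$-linear automorphism whose inverse is obtained by replacing $\gamma$ with $\gamma^{-1}$. A row-by-row check will show $\phi$ sends $\ker S_{G,\vec d}$ bijectively onto $\ker S_{G',\vec d}$: rows for edges disjoint from $k$ are literally unchanged; for an edge $ik$, the shift $\vec M(\gamma)$ added to $\vec v_k$ exactly cancels $\vec M(\gamma_{ik}') - \vec M(\gamma_{ik}) = -\vec M(\gamma)$; for an edge $kj$ the same shift cancels $\vec M(\gamma_{kj}') - \vec M(\gamma_{kj}) = \vec M(\gamma)$; and for loops at $k$, the variable $\vec v_k$ cancels with itself and the color is unchanged.

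For the group-ring matrix $\hat{S}_{G,\vec d}$, viewed as an $\R[\Gamma]$-linear map $\mc X^{dn} \to \mc X^m$, I will define
$$\hat{\phi}: \mc X^{dn}\to\mc X^{dn},\qquad \hat{\phi}(\vec w)_k = \gamma\cdot\vec w_k,\qquad \hat{\phi}(\vec w)_i = \vec w_i\ (i\neq k),$$
where $\gamma$ acts by multiplication in $\R[\Gamma]$ (equivalently by the natural shift action on $\mc X$). Because $\Gamma$ is abelian, $\hat{\phi}$ is $\R[\Gamma]$-linear with inverse given by $\gamma^{-1}$, and because the $\Lambda$-action commutes with multiplication by $\gamma$, it restricts to an automorphism of $\mc X_\Lambda^{dn}$. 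An analogous row-by-row verification will establish the kernel correspondence: for edges $ik$, the factor $\gamma$ in $\hat{\phi}(\vec w)_k$ combines with $\gamma_{ik}' = \gamma_{ik}\gamma^{-1}$ to reproduce the old row expression exactly; for edges $kj$, the new row applied to $\hat{\phi}(\vec w)$ equals $\gamma$ times the old row applied to $\vec w$, and since $\gamma$ is a unit in $\R[\Gamma]$, vanishing of one is equivalent to vanishing of the other; loops at $k$ acquire the same unit factor.

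The only point requiring genuine care is keeping the multiplicative notation of $\Gamma$ compatible with both the additive structure on $\Hom(\Gamma,\R^d)$ (for the first case) and the $\R[\Gamma]$-module structure on $\mc X$ (for the second case). Once those conventions are fixed, all of the row-by-row verifications reduce to direct substitution into the defining formulas for $S_{G,\vec d}$ and $\hat{S}_{G,\vec d}$, so I expect the entire proof to be routine bookkeeping rather than requiring any deeper input.
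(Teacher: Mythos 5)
Your proposal is essentially the paper's proof: reduce to a single elementary change, then exhibit an explicit automorphism intertwining the two rigidity maps, with $\phi(\vec v,\vec M)_k = \vec v_k + \vec M(\gamma)$ for $S$ and multiplication by a unit of $\R[\Gamma]$ on the $k$th coordinate for $\hat S$.

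One small caution on the $\hat S$ case: the paper's Section 2 definition puts $\vec d_{ij}\otimes\gamma_{ij}^{-1}$ in the $j$th column, and its map multiplies $\vec w_k$ by $\gamma^{-1}$ (not $\gamma$). With that convention the factor that appears on the new $ik$-row is $\gamma\gamma_{ik}^{-1}$, so multiplying $\vec w_k$ by $\gamma$ would produce an unwanted $\gamma^{2}$ rather than cancel; you need $\gamma^{-1}$ for the forward map $\ker\hat S_{G}\to\ker\hat S_{G'}$, and $\gamma$ is its inverse going the other way. (The introduction's display \eqref{ultra-matrix} uses $\gamma_{ij}$ without the inverse, which is where your sign came from.) Since multiplication by $\gamma$ is still a $\Lambda$-equivariant automorphism carrying one kernel onto the other, your map does establish the isomorphism; just check the row computation with the $\gamma_{ij}^{-1}$ convention to confirm the direction of the cancellation, and note that the ``exact cancellation'' claim you made for edges $ik$ holds only with the other sign convention.
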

\begin{proof}
It suffices to prove lemma for elementary changes. Suppose the change is by $\gamma$ at vertex $k$. The kernel of $S_{G, \vec d}$ is equivalent to the set of vectors
$(\vec w, \vec M) \in \R^{dn} \times \Hom(\Gamma, \R^d)$ satisfying for all edges $ij$
$$ \langle \vec w_j, \vec d_{ij} \rangle - \langle \vec w_i, \vec d_{ij} \rangle + \langle \vec M(\gamma_{ij}), \vec d_{ij} \rangle = 0.$$
The kernel of $S_{G, \vec d}$ is the set of vectors satisfying
$$\begin{array}{rcll}  \langle \vec w_j, \vec d_{ij} \rangle - \langle \vec w_i, \vec d_{ij}  \rangle + \langle \vec M(\gamma_{ij}), \vec d_{ij} \rangle& = & 0
& \text{ if } i \neq k \neq j \text{ or } i = k = j\\
\langle \vec w_j, \vec d_{ij} \rangle - \langle \vec w_i, \vec d_{ij} \rangle + \langle \vec M(\gamma \gamma_{ij}), \vec d_{ij} \rangle &=& 0
& \text{ if } i = k \neq j \\
\langle \vec w_j, \vec d_{ij} \rangle - \langle \vec w_i, \vec d_{ij} \rangle + \langle \vec M(\gamma_{ij} \gamma^{-1}), \vec d_{ij} \rangle &=& 0
& \text{ if } i \neq k = j \end{array}  $$
The map $(\vec w, \vec M) \mapsto ( (\vec w_1, \dots, \vec w_{k-1}, \vec w_k + \vec M(\gamma), \vec w_{k+1}, \dots, \vec w_n) , \vec M)$ provides
the isomorphism $\ker(S_{G, \vec d}) \cong \ker(S_{G', \vec d})$.

The kernel of $\hat{S}_{G, \vec d}$ is equivalent to the set of vectors $\vec w \in \mc X^{dn}$ satisfying for all $ij$
$$ [ \vec w_j, \vec d_{ij} \otimes \gamma_{ij}^{-1}]  - [ \vec w_i, \vec d_{ij} \otimes 1] = 0$$
The map $\vec w \mapsto ( \vec w_1, \dots, \vec w_{k-1}, \vec w_k \gamma^{-1} , \vec w_{k+1}, \dots, \vec w_n)$ provides the isomorphism
$\ker(\hat{S}_{G, \vec d}) \cap \mc X_{\Lambda}^{dn}   \cong \ker(\hat{S}_{G', \vec d})  \cap \mc X_{\Lambda}^{dn}$.
\end{proof}

\subsection{A previous result on direction networks} \seclab{perdirnet}
A key ingredient in the proof is the ability to choose generic directions for the edges $\vec d_{ij} = \vec p_j + \vec L \gamma_{ij} - \vec p_i$.
More precisely we have the following theorem which is one direction of \cite[Theorem B]{MT13}.
\begin{prop} \proplab{dirnet}
Let $(G, \bgamma)$ be a $\Z^2$-colored graph which is colored-Laman. Then, there is a proper subvariety $V \subset \R^{2m}$
defined over $\Q$
such that if $\vec d = (\vec d_{ij}) \notin V$, then there exists a framework $G(\vec p, \vec L)$ and scalars $c_{ij} \neq 0$ satisfying
$c_{ij} \vec d_{ij} = \vec p_j + \vec L \gamma_{ij} - \vec p_i$ for all edges $ij \in E(G)$.
\end{prop}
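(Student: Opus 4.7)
The plan is to convert the direction network equations into a single linear system and then invoke a matroid-theoretic rank calculation. For each edge $ij \in E(G)$, the constraint $c_{ij} \vec d_{ij} = \vec p_j + \vec L \gamma_{ij} - \vec p_i$ is equivalent (with $c_{ij}$ allowed to vary) to requiring that the right-hand side be parallel to $\vec d_{ij}$; picking a unit perpendicular $\vec d_{ij}^\perp$ turns this into one scalar equation $\langle \vec d_{ij}^\perp, \vec p_j + \vec L \gamma_{ij} - \vec p_i \rangle = 0$. Collecting these across all $m = 2n+1$ edges yields a matrix $M(\vec d)$ of size $m \times (2n + 4)$ whose entries are linear in the coordinates of $\vec d$. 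The kernel of $M(\vec d)$ parameterizes the candidate pairs $(\vec p, \vec L)$ compatible with the prescribed directions, and it always contains the $3$-dimensional subspace $T$ spanned by translations of $\vec p$ and the common dilation $(\vec p, \vec L) \mapsto (\lambda \vec p, \lambda \vec L)$, each of which preserves edge directions.

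The crux is to show that for $\vec d$ outside a proper $\Q$-subvariety $V_1 \subset \R^{2m}$, the rank of $M(\vec d)$ is exactly $m$, so that $\ker M(\vec d) = T$ and $(\vec p, \vec L)$ is determined up to the expected trivial moves. Since the maximal minors of $M(\vec d)$ are polynomials with rational coefficients, it suffices to exhibit a single $\vec d^\star$ for which one $m \times m$ minor is nonzero. This is where the colored-Laman hypothesis enters, and it is the \emph{main obstacle}: one needs a linear representation of the colored-Laman matroid over $\R$, obtained by specializing the $\vec d_{ij}^\perp$ so that the sparsity count $m' \leq 2n' + 2\rk(\rho(G')) - 2c' - 1$ from \secref{graphmats} translates into row independence. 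The natural tool, mirroring \lemref{linrepgamd} for the \Gamd{2} matroid, is a construction that assigns algebraically independent coordinates to the $\vec d_{ij}$ while pairing each edge with a twist coming from $\bgamma$; one then either uses induction on the colored-Laman circuit construction or specializes to a finite-cyclic quotient where the $\Gamd{d}$ representation of \lemref{linrepgamd} applies. This is essentially the forward direction of \cite[Theorem B]{MT13}, and I would invoke that result directly (or reproduce its inductive argument).

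Once rank $m$ is established outside $V_1$, the solution $(\vec p, \vec L)$ is unique modulo $T$, and each scalar $c_{ij}$ becomes a rational function of $\vec d$. The condition $c_{ij} = 0$ is equivalent to the realized edge vector vanishing, which cuts out a Zariski closed subset $W_{ij} \subset \R^{2m}$; setting $V := V_1 \cup \bigcup_{ij} W_{ij}$, it remains to verify that $V$ is still a proper subvariety defined over $\Q$. For this it suffices to exhibit one direction assignment outside $V_1$ for which every realized edge vector is nonzero, and this follows by a small $\Q$-perturbation of the explicit $\vec d^\star$ built in the rank step: continuity keeps all edge vectors bounded away from zero, while the perturbation preserves the nonvanishing of the distinguished maximal minor. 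Then every $\vec d \notin V$ yields the desired framework together with nonzero scalars $c_{ij}$, completing the proposition.
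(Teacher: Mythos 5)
The paper offers no internal proof of this proposition: it is stated explicitly as ``one direction of \cite[Theorem B]{MT13}'' and the citation \emph{is} the proof. Your outline ultimately does the same thing---at the decisive step (establishing generic rank $m$ from the colored-Laman hypothesis) you say you would invoke \cite[Theorem B]{MT13} or reproduce its inductive argument---so this is essentially the paper's approach with extra scaffolding around it. That scaffolding is a correct reconstruction of the structure inside the cited result: the perpendicular reformulation replacing $c_{ij}\vec d_{ij} = \vec p_j + \vec L\gamma_{ij} - \vec p_i$ by $\langle \vec d_{ij}^\perp,\, \vec p_j + \vec L\gamma_{ij} - \vec p_i\rangle = 0$, the $m\times(2n+4)$ matrix $M(\vec d)$, and the $3$-dimensional trivial kernel $T$ of translations and homothety are all as in \cite{MT13}. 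Two small caveats if you meant this to be self-contained rather than a citation: (1) the scalars $c_{ij}$ are only determined up to the common scaling in $T$, so ``rational function of $\vec d$'' should be read projectively, though this does not affect the nonvanishing condition; (2) the closing perturbation step assumes the witness $\vec d^\star$ constructed for the rank bound already has all realized edge vectors nonzero, which you have not argued---to conclude that each $W_{ij}$ is a proper subvariety you need at least one point outside it, and this is precisely the generic nonvanishing of edge lengths that \cite[Theorem B]{MT13} itself asserts (the ``faithful'' realization). Since the paper treats the whole statement as a black-box import from \cite{MT13}, these issues do not arise there.
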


\subsection{Proof of \theoref{dtwocomb}}
We begin by proving necessity. Suppose $G(\vec p, \vec L)$ is infinitesimally ultrarigid. Then, by
\corref{rigiditymatrix},
$S$ has rank
$2n+1$ and $pr_{\bomega}(\hat S)$ has $\C$-rank $2n$ for all torsion points $\bomega \neq (1,1)$. Thus, by \cite[Theorem A]{MT13},
$(G, \bgamma)$ is colored-Laman. Let $\Psi: \Z^2 \to \Z/N\Z$ be some surjective homomorphism. Let $\zeta$ be a primitive $N$th root of unity and let
$\bomega = (\zeta^{\Psi(e_1)}, \zeta^{\Psi(e_2)})$. Then, $pr_{\bomega}: \R[\Z^2] \to \F_{\omega}$ restricted to $\Z^2 \to \langle \zeta \rangle \cong \Z/N\Z$ is equivalent to $\Psi$.
It is clear from inspection that $pr_{\bomega}(\hat S)$ is a specialization of the matrix $\vec M_{N,2,2}(G, \Psi(\bgamma))$. By  \lemref{linrepgamd},
it follows that $(G, \Psi(\bgamma))$ is \Gamd{2}-spanning.

We now prove sufficiency. Choose $a_{ij}, b_{ij} \in \R$ for all edges which are algebraically independent over $\Q$.
Necessarily, the $\vec d_{ij} = (a_{ij}, b_{ij})$ avoid the subvariety $V$ as in \propref{dirnet}. By that same \propref{dirnet}, there is a framework
$G(\vec p, \vec L)$ such that $c_{ij} \vec d_{ij} = \vec p_j + \vec L\gamma_{ij} - \vec p_i$ for $c_{ij} \neq 0$. We can thus rescale each row $ij$ of
$\hat S$ by $1/c_{ij}$ to obtain a matrix $\hat S'$ with rows:
$$\begin{array}{cccccccc}      &   &         &  i                  &           & j                                                  &          & \\
ij & ( & \dots & -\vec d_{ij} & \dots & \vec d_{ij} \otimes [\gamma_{ij}] & \dots  & )
\end{array} $$
Clearly, $pr_{\bomega}(\hat S)$ has rank $2n$ if and only if $pr_{\bomega}(\hat S')$ does.
Using similar arguments to the above (but in reverse), $pr_{\bomega}(\hat S')$ is $\vec M_{N, 2,2}(G, \Psi(\bgamma))$ for some $N$ and epimorphism
$\Psi: \Z^2 \to \Z/N\Z$. By  \lemref{linrepgamd}, $pr_{\bomega}(\hat S')$ has rank $2n$. Moreover, by \cite[Theorem A]{MT13}, $S$ has rank $2n+1$.

We now prove the claim that we only need to verify that $(G, \Psi(\bgamma))$ is $\Delta$-$(2,2)$ for finitely many $\Psi$. Let $G(\vec p, \vec L)$ be as above
where the coordinates of the $\vec d_{ij}$ are generic. Let $q_1, \dots, q_k$ be all the $m \times m$ minors of the rigidity matrix $\hat S$.
Let $\hat C = C_2 D$ where $C_2$ is the constant from \secref{tpiv} and $D = \sum_{ij \in E(G)} \|\gamma_{ij}\|_1$, and set
$N_0 = \max(8500, (\hat C \log \hat C)^2)$.
Genericity of the $\vec d_{ij}$ implies the coefficient field $K$ is a purely transcendental extension of $\Q$ and so $K \cap \Q_{ab} = \Q$.
\lemref{linrepgamd} implies that all $q_i$ do not vanish at any torsion point $\bomega \neq \vec 1$ up to order $N_0$.
Since $[K \cap \Q_{ab}: \Q] = 1$ and $\widetilde{\deg}(q_i) \leq D$ for all $i$, by \propref{simplealggeneral},
the only torsion point in the variety defined by the $q_i$ is $\vec 1$. Consequently, $\tilde{G}(\vec p, \vec L)$
is infinitesimally ultrarigid.
\eop

Note that the ``Maxwell'' direction in the above proof applies mutatis mutandis to all dimensions regardless of the number of edges. We thus have the following necessary conditions
for infinitesimal ultrarigidity in all dimensions.

\begin{cor}
Let $(G, \bgamma)$ be a $\Z^d$-colored graph. If $G(\vec p, \vec L)$ is infinitesimally ultrarigid for some framework $(\vec p, \vec L)$,
then for all surjective homomorphisms $\Psi: \Z^d \to \Z/N\Z$, the graph $(G, \Psi(\bgamma))$ is \Gamd{d}-spanning.
\end{cor}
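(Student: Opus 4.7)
The plan is to transcribe the ``Maxwell direction'' of the proof of \theoref{dtwocomb} verbatim; nothing in that half of the argument actually uses $d=2$ once the conclusion is stated as \Gamd{d}-spanning rather than in the counting form that happens to be equivalent for $d=2$. The strategy has three ingredients: (a) translate ultrarigidity into a rank statement via \corref{rigiditymatrix}; (b) match that rank statement to a specialization of the linear representation matrix $\vec M_{N,d,d}$; and (c) invoke \lemref{linrepgamd}.

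In detail, I would first assume $\tilde G(\vec p,\vec L)$ is infinitesimally ultrarigid, so by \corref{rigiditymatrix} the matrix $pr_{\bomega}(\hat S_{G,\vec d})$ has $\C$-rank $dn$ for every torsion point $\bomega\neq\vec 1$. Given an epimorphism $\Psi:\Z^d\to\Z/N\Z$, I pick a primitive $N$th root of unity $\zeta$ and set $\bomega=(\zeta^{\Psi(e_1)},\dots,\zeta^{\Psi(e_d)})$. Surjectivity of $\Psi$ forces $\bomega\neq\vec 1$, and $pr_{\bomega}$ sends $\gamma\in\Z^d$ to $\zeta^{\Psi(\gamma)}$.

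Comparing row patterns, $pr_{\bomega}(\hat S_{G,\vec d})$ is then the specialization of $\vec M_{N,d,d}(G,\Psi(\bgamma))$ obtained by substituting each generic row vector $\vec v_{ij}$ by the framework edge vector $\vec d_{ij}$ (if the sign on the exponent of $\gamma_{ij}$ in $\hat S$ versus $\vec M_{N,d,d}$ needs to be reconciled, I replace $\zeta$ by $\zeta^{-1}$, which is still a primitive $N$th root of unity). Since this specialization has rank $dn$ and rank is lower semicontinuous in the matrix entries, the generic rank of $\vec M_{N,d,d}(G,\Psi(\bgamma))$ is at least $dn$; on the other hand, the matrix has only $dn$ columns, so the generic rank is exactly $dn$.

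Finally, any maximal linearly independent set of rows of $\vec M_{N,d,d}(G,\Psi(\bgamma))$ picks out a subgraph $H\subseteq G$ with exactly $dn$ edges, and restricting those rows to the columns indexed by $V(H)$ yields $\vec M_{N,d,d}(H,\Psi(\bgamma)|_H)$, still of rank $dn$. The column count forces $|V(H)|\geq n$, so $V(H)=V(G)$, and \lemref{linrepgamd} applied to $H$ then says $H$ is \Gamd{d}. Hence $(G,\Psi(\bgamma))$ is \Gamd{d}-spanning. I do not expect a substantial obstacle here; the only point requiring care is the last step, namely verifying that the rank-$dn$ criterion of \lemref{linrepgamd} really does transfer cleanly from the whole-graph matrix to the spanning subgraph picked out by the maximal independent set of rows, which is the elementary observation recorded above.
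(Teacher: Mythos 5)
Your proposal is correct and takes essentially the same route as the paper: translate ultrarigidity via \corref{rigiditymatrix} to a rank condition on $pr_{\bomega}(\hat S)$, identify that specialization with $\vec M_{N,d,d}(G,\Psi(\bgamma))$ at the torsion point $\bomega=(\zeta^{\Psi(e_1)},\dots,\zeta^{\Psi(e_d)})$, and then invoke \lemref{linrepgamd}; the paper's brief phrase ``mutatis mutandis'' is exactly this. The only wrinkle you flag, reconciling the $\gamma_{ij}^{-1}$ in $\hat S$ against the $\zeta^{\gamma_{ij}}$ in $\vec M_{N,d,d}$ by passing to $\zeta^{-1}$, is harmless because the remark after the definition of $\vec M_{N,d,d}$ notes that \lemref{linrepgamd} holds for every choice of primitive $N$th root of unity, and your careful extraction of a rank-$dn$ subgraph $H$ (so that \lemref{linrepgamd}, which requires $m=dn$, applies) and the observation that restricting to its incidence columns preserves rank and forces $V(H)=V(G)$ is a correct and usefully explicit unpacking of the step the paper leaves implicit in concluding ``\Gamd{d}-spanning'' rather than ``\Gamd{d}''.
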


Moreover the proof implies the following effective version of \theoref{dtwocomb}.
\begin{cor}\corlab{effdtwocomb}
Let $\tilde{G}(\vec p,\vec L)$ be a generic $2$-dimensional periodic framework with
associated colored graph $(G,\bgamma)$ on $n$ vertices and $m=2n+1$ edges.
Let $D = \sum_{ij \in E(G)} \|\gamma_{ij}\|_1$ and $\hat C = C_2 D$.  Then $\tilde{G}(\vec p,\vec L)$
is infinitesimally ultrarigid if and only if $(G, \bgamma)$ is colored-Laman and $(G, \Psi(\bgamma))$ is $\Z/N\Z$-$(2,2)$ spanning for all
$N \leq \max(8500, (\hat C \log \hat C)^2)$ and epimorphisms $\Psi: \Z^2 \to \Z/N\Z$.
\end{cor}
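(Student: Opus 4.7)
The plan is to extract the effective bound directly from the proof of \theoref{dtwocomb}, which already establishes the qualitative statement and in its final paragraph hints at the quantitative version. I would essentially carry over both directions unchanged, and the only added step is to make the cut-off $N_0$ explicit by invoking \corref{esttwo} with the correct parameters for a generic framework.

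For necessity, I would reuse verbatim the argument already given: if $\tilde G(\vec p, \vec L)$ is infinitesimally ultrarigid, then \corref{rigiditymatrix} gives that $S$ has rank $2n+1$ (whence $(G,\bgamma)$ is colored-Laman by \cite{MT13}), and for every torsion point $\bomega \neq \vec 1$ the matrix $\pr_{\bomega}(\hat S)$ has rank $dn = 2n$. Given any $N$ and any epimorphism $\Psi \colon \Z^2 \to \Z/N\Z$, one picks a primitive $N$th root of unity $\zeta$ and the torsion point $\bomega = (\zeta^{\Psi(e_1)}, \zeta^{\Psi(e_2)})$; then $\pr_{\bomega}(\hat S)$ is a specialization of $\vec M_{N,2,2}(G, \Psi(\bgamma))$, so \lemref{linrepgamd} yields that $(G, \Psi(\bgamma))$ is \Gamd{2}-spanning. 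This works for all $N$, hence in particular for $N \leq N_0$.

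For sufficiency, I would run exactly the argument from the proof of \theoref{dtwocomb}. Using \propref{dirnet} I pick algebraically independent coordinates $a_{ij}, b_{ij}$ for the edge vectors $\vec d_{ij}$, obtain a framework $G(\vec p, \vec L)$ realizing directions proportional to these, rescale rows of $\hat S$ to get $\hat S'$ whose entries are monomials with coefficients $\pm \vec d_{ij}$, and note (via \lemref{linrepgamd}) that $\pr_{\bomega}(\hat S')$ at a torsion point $\bomega$ of order $N$ coincides up to rank-preserving identifications with $\vec M_{N,2,2}(G, \Psi(\bgamma))$ for the associated epimorphism $\Psi$. The colored-Laman hypothesis yields that $S$ has full rank $2n+1$. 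The \Gamd{2}-spanning hypothesis for all $\Psi$ with $N \leq N_0$ guarantees that all $m \times m$ minors $q_1, \dots, q_k$ of $\hat S'$ are nonzero at every torsion point of order at most $N_0$.

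The only step that is genuinely added for the effective version is verifying that the constant $N_0$ from \corref{esttwo} really is the stated $\max(8500,(\hat C \log \hat C)^2)$. This reduces to showing $[K \cap \Q_{ab} : \Q] = 1$, where $K$ is the field generated by the coefficients of the $q_i$: since the coordinates of the $\vec d_{ij}$ are algebraically independent over $\Q$, the field $K$ is a purely transcendental extension of $\Q$, which contains no nontrivial algebraic elements and in particular no nontrivial roots of unity, so $K \cap \Q_{ab} = \Q$. Combined with $\widetilde{\deg}(q_i) \leq D$ (since each row of $\hat S'$ contributes a monomial of total degree at most $\|\gamma_{ij}\|_1$, and a minor picks one row per edge contributing), the constant $\hat C$ of \propref{simplealggeneral} is exactly $C_2 D$, and the bound from \corref{esttwo} for $d=2$ is $\max(8500,(\hat C \log \hat C)^2)$. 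Applying \propref{simplealggeneral} then rules out any torsion point $\neq \vec 1$ in the common zero set of the $q_i$, so \corref{rigiditymatrix} completes the argument. The only potential obstacle is the degree bound $\widetilde{\deg}(q_i) \leq D$; this is a routine combinatorial check on the monomial structure of $\hat S'$, since each edge $ij$ contributes a monomial of $\widetilde{\deg}$ equal to $\|\gamma_{ij}\|_1$ and an $m \times m$ minor is a signed sum of products taking one entry per row. \eop
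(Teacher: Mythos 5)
Your proposal follows the paper's approach exactly: the paper states that \corref{effdtwocomb} ``follows from the proof of \theoref{dtwocomb}'', and you reproduce that proof (in both directions) while making the bound $N_0$ explicit via \corref{esttwo} and \propref{simplealggeneral}, including the correct check that $[K\cap\Q_{ab}:\Q]=1$ for generic $\vec d_{ij}$ and that $\tdeg(q_i)\leq D$. One small wording glitch: you write that the \Gamd{2}-spanning hypothesis ``guarantees that all $m\times m$ minors $q_1,\dots,q_k$ of $\hat S'$ are nonzero at every torsion point of order at most $N_0$,'' but that is stronger than what \lemref{linrepgamd} gives and stronger than what is needed---spanning only guarantees that \emph{some} $2n\times 2n$ minor (the one indexed by the edges of a spanning \Gamd{2} subgraph) is nonzero at each such torsion point, which is exactly the condition $\bomega\notin V(q_1,\dots,q_k)$ required by \propref{simplealggeneral}. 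The logic of your argument still holds with this corrected reading.
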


\subsection{Relations between combinatorial classes}
Here, we state some basic relations among our combinatorial classes which will be useful for proving \theoref{dtwocombflfa} and presenting
our polynomial time combinatorial algorithms for checking the conditions therein.

\begin{lemma} \lemlab{altform}
A $\Z^2$-colored graph $(G, \bgamma)$ is $\Delta$-$(2,2)$ for every epimorphism $\psi: \Z^2 \to \Delta$ to finite cyclic $\Delta$ if and only if
every $\rho(G') = \Z^2$ for every $(2,2)$-circuit $G' \subset G$.
\end{lemma}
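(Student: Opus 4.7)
The plan is to prove both directions of \lemref{altform} by contraposition. The forward direction is clean; the backward direction splits into an easy case and a more delicate one.

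For the forward direction, suppose some $(2,2)$-circuit $G' \subset G$ satisfies $\rho(G') \subsetneq \Z^2$. The abelian group $\Z^2/\rho(G')$ is nontrivial and finitely generated, so it admits a surjection onto some nontrivial finite cyclic group $\Delta$---via a torsion cyclic summand if $\rho(G')$ has full rank, or via reduction modulo $N\geq 2$ of a free summand if $\rho(G')$ has smaller rank. Composing yields an epimorphism $\psi : \Z^2 \to \Delta$ with $\psi(\rho(G'))=0$. Because $(k,\ell)$-circuits are connected when $k \geq \ell$, $G'$ is a connected subgraph of $(G, \psi(\bgamma))$ with $m' = 2n' - 1$ edges and $\psi(\rho(G'))=0$; the $\Delta$-$(2,2)$ sparsity inequality $m' \leq 2n' - 2$ is then violated, so $(G,\psi(\bgamma))$ is not $\Delta$-$(2,2)$.

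For the backward direction, assume every $(2,2)$-circuit in $G$ has $\rho = \Z^2$ and fix an epimorphism $\psi: \Z^2 \to \Delta$ onto a nontrivial finite cyclic group. Suppose for contradiction that some subgraph violates $\Delta$-$(2,2)$ sparsity, and pick an edge-minimal violating $G'$; by additivity of the sparsity function over components, one may assume $G'$ is connected. If $\psi(\rho(G'))=0$, then $m' \geq 2n' - 1$, so $G'$ contains a $(2,2)$-circuit $G''$; by hypothesis $\rho(G'') = \Z^2$, but $\rho(G'') \subseteq \rho(G') \subseteq \ker \psi \subsetneq \Z^2$, contradiction. If instead $\psi(\rho(G')) \neq 0$, then $m' \geq 2n' + 1$; here $G'$ still contains a $(2,2)$-circuit, forcing $\rho(G') = \Z^2$, and one invokes the ambient graph-theoretic sparsity on $G$ (colored-Laman in \theoref{dtwocomb}, unit-area-Laman in \theoref{dtwocombflfa}) to rule out a connected subgraph with $m' > 2n'$ whose $\rho$ has rank $2$.

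The main obstacle is this last case. The $(2,2)$-circuit hypothesis alone does not obviously prevent connected subgraphs with $m' > 2n'$ and rank-$2$ image under $\psi\circ\rho$, so the argument genuinely uses the accompanying graph class in the theorems where the lemma is applied. With that sparsity in hand, this final case is excluded and both directions are complete.
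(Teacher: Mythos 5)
Your proof is correct, and it is in fact more careful than the paper's. More importantly, you have put your finger on a genuine gap: the paper's backward direction reads ``Any $\Delta$-$(2,2)$ circuit of $(G, \psi(\bgamma))$ contains a $(2,2)$ circuit $G'$ for which, by assumption, $\rho(G') = \Z^2$. Thus, there is no $\Delta$-$(2,2)$ circuit,'' but the ``Thus'' only works when the $\Delta$-$(2,2)$-circuit $C$ is $\psi$-balanced: then every $(2,2)$-circuit $G' \subseteq C$ inherits $\psi(\rho(G'))=0$, which contradicts $\rho(G')=\Z^2$. If $C$ is $\psi$-unbalanced, $C$ is a connected subgraph with $m(C) \geq 2n(C)+1$, and the fact that some $(2,2)$-circuit inside it has $\rho=\Z^2$ gives no contradiction at all. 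Your split into the two cases, and your recognition that the second case must be killed by an ambient bound $m' \leq 2n'$, is exactly the missing step.

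Your worry that the $(2,2)$-circuit hypothesis ``does not obviously prevent'' a connected subgraph with $m' > 2n'$ is not merely a gap in exposition --- the lemma as literally stated is false without some ambient sparsity. For instance, take $n = 4$ vertices $a,b,c,d$ with four parallel $ab$-edges colored $(0,0),(1,0),(0,1),(1,1)$, three parallel $bc$-edges colored $(0,0),(1,0),(0,1)$, and one $cd$-edge colored $(0,0)$, so $m = 8 = 2n$. Every $(2,2)$-circuit here is a triple of parallel edges and one checks directly that each such triple has $\rho$-image all of $\Z^2$; yet the connected subgraph on $\{a,b,c\}$ has $7 > 2\cdot 3$ edges and therefore violates $\Delta$-$(2,2)$-sparsity for \emph{every} $\psi$. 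So the right hand side of \lemref{altform} holds while the left hand side fails. In the settings where the paper actually invokes the lemma (\lemref{anotherchar} and \theoref{dtwocombflfa}), the hypothesis ``Ross graph plus two edges'' or ``unit-area-Laman'' supplies exactly the bound $m' \leq 2n'$ for every connected subgraph, which is what your final case needs and what rules out the example above. One small caveat: for \theoref{dtwocomb} the ambient count is colored-Laman, which only gives $m' \leq 2n'+1$ when $\rk\rho(G')=2$ and hence does not by itself exclude the bad case; the paper itself acknowledges in \secref{combalg} that it has no analogue of \lemref{altform} when $m = 2n+1$, so the lemma is really only deployed in the $m=2n$ settings where your argument closes cleanly. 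Your forward direction matches the paper's contrapositive computation.
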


\begin{proof}
Assume the latter condition. Any $\Delta$-$(2,2)$ circuit of $(G, \psi(\bgamma))$ contains a $(2,2)$ circuit $G'$ for which, by assumption, $\rho(G') = \Z^2$.
Thus, there is no $\Delta$-$(2,2)$ circuit.

Assume the former condition. For any $(2,2)$ circuit $G' \subset G$, we must have $\psi(\rho(G')) \neq 0$ for all surjective representations $\psi: \Z^2 \to \Delta$ for
$\Delta$ cyclic. This implies $\rho(G') = \Z^2$.
\end{proof}

\begin{lemma} \lemlab{containsRoss}
All unit-area-Laman and colored-Laman graphs contain a spanning Ross graph.
\end{lemma}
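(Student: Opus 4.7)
The plan is to reduce the statement to two independent claims and combine them: (i) the underlying (uncolored) graph of $(G, \bgamma)$ already contains two edge-disjoint spanning trees, i.e.\ a spanning $(2,2)$-subgraph $H$ with $|E(H)| = 2n-2$; and (ii) any such spanning $(2,2)$-basis $H$ of $G$ is automatically a Ross graph for the inherited coloring. Together, (i) produces the candidate subgraph, and (ii) upgrades it to Ross.

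For (i), I would apply Nash--Williams' edge-disjoint spanning tree theorem: $G$ fails to contain two edge-disjoint spanning trees iff there is a partition $P$ of $V(G)$ into $r$ parts with fewer than $2(r-1)$ crossing edges, i.e.\ $e_G(P) \leq 2r - 3$. Let $G_P$ be the spanning subgraph of edges internal to the parts of $P$, so $c(G_P) \geq r$ and $m(G_P) \geq m - (2r-3)$. In the colored-Laman case $m = 2n+1$, colored-Laman-sparsity applied to $G_P$ gives
$$m(G_P) \leq 2n + 2\rk(\rho(G_P)) - 2c(G_P) - 1 \leq 2n - 2r + 3,$$
which contradicts $m(G_P) \geq 2n - 2r + 4$. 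In the unit-area-Laman case $m = 2n$, the same inequality yields $m(G_P) \leq 2n - 2r + 3$, matching the lower bound exactly; equality forces $\rk(\rho(G_P)) = 2$ and $c(G_P) = r$, but then the extra strict inequality in the definition of a unit-area-Laman graph sharpens the bound to $m(G_P) \leq 2n - 2r + 2$, again a contradiction.

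For (ii), let $H \subseteq G$ be any spanning subgraph with $|E(H)| = 2n-2$ which is $(2,2)$-sparse (for instance, the union of the two spanning trees produced by (i)). Suppose $H$ failed the Ross condition: there would exist $H' \subseteq H$ with $n(H') \geq 2$, $m(H') = 2n(H') - 2$, and $\rho(H') = 0$. Colored-Laman-sparsity, inherited by $H'$ from $G$, then forces
$$m(H') \leq 2n(H') + 2\cdot 0 - 2c(H') - 1 \leq 2n(H') - 3,$$
contradicting $m(H') = 2n(H') - 2$. Hence no such $H'$ exists, and $H$ is Ross.

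The main obstacle is simply the bookkeeping: making sure the sparsity inequality used in step (i) is tight only in a regime where the extra unit-area-Laman strict inequality applies, and confirming in step (ii) that ``$\rho = 0$'' combined with colored-Laman-sparsity always beats the plain $(2,2)$-count by at least one. Both reduce to elementary arithmetic once the right subgraph is isolated.
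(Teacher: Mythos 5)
Your proof is correct, but it takes a genuinely different route from the paper's. The paper's proof is geometric: a colored-Laman (resp.\ unit-area-Laman) graph has a generic realization that is infinitesimally rigid in the flexible-lattice (resp.\ fixed-area) model; imposing the fixed-lattice constraint only adds restrictions, so the framework is also infinitesimally rigid as a fixed-lattice framework; one then cites the known combinatorial characterization of generic fixed-lattice rigidity (\cite{R11} or \cite[Proposition 4]{MT13}) to conclude that the graph contains a spanning Ross subgraph. Your proof is purely combinatorial and bypasses rigidity theory entirely: Nash--Williams' tree-packing theorem, combined with the colored-Laman sparsity counts (and, in the unit-area case, the strict inequality for rank-$2$ subgraphs), produces two edge-disjoint spanning trees directly; then a one-line sparsity argument shows that every $(2,2)$-tight subgraph of a colored-Laman-sparse graph automatically satisfies the $\rho$-nontriviality required by the Ross condition. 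What the paper's approach buys is brevity given the surrounding machinery; what yours buys is self-containment, a more elementary proof, and a structural insight the paper's argument hides --- namely that \emph{every} $(2,2)$-basis of $G$, not just some carefully chosen one, is already Ross. Two small points worth making explicit in a clean write-up: in step (i), when applying the sparsity count to $G_P$ you should either restrict to the subgraph spanned by $E(G_P)$ or note that $2n' - 2c'$ is unchanged by discarding isolated vertices, and you should dispatch the degenerate case $E(G_P)=\emptyset$ directly (which follows immediately from $m \geq 2n$); in step (ii), you should note that since $H$ is $(2,2)$-sparse the only subgraphs $H'$ of $H$ with $m' > 2n'-3$ are the $(2,2)$-tight ones, which justifies considering only $m(H') = 2n(H')-2$.
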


\begin{proof}
Let $(G, \bgamma)$ be such a graph and choose a generic realization which is then necessarily infinitesimally rigid (in the forced symmetry sense).
If we impose the additional constraint that the lattice be fixed, then $(G, \bgamma)$ is rigid as a graph with fixed lattice. Since it is generic, it is infinitesimally
rigid as a fixed-lattice framework and hence contains a spanning Ross graph by \cite{R11} or \cite[Proposition 4]{MT13}.
\end{proof}

The next lemma establishes the equivalence of (iii) and (iv) of \theoref{dtwocombflfa}.
\begin{lemma}\lemlab{UALandRP2}
A colored graph $(G, \bgamma)$ is unit-area-Laman if and only if $(G, \bgamma)$ is colored-Laman-sparse and a Ross graph
plus $2$ edges.
\end{lemma}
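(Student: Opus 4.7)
The plan is to prove the two directions separately. The forward direction is essentially a one-line application of \lemrefX{containsRoss}, while the reverse direction reduces to a counting argument that combines colored-Laman-sparsity with the $(2,2)$-sparsity inherited from the Ross subgraph.

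For the forward direction, I would assume $(G, \bgamma)$ is unit-area-Laman. By definition this gives $m = 2n$ and colored-Laman-sparsity, and \lemrefX{containsRoss} then yields a spanning Ross subgraph $H \subset G$. Since $H$ has $2n - 2$ edges while $G$ has $2n$, there are exactly two edges in $E(G) \setminus E(H)$, so $G$ is a Ross graph plus two edges.

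For the reverse direction, assume $(G, \bgamma)$ is colored-Laman-sparse and expressible as a spanning Ross subgraph $H$ plus two edges. Then $m = (2n-2) + 2 = 2n$ and colored-Laman-sparseness are both immediate, so the only thing to verify is that every subgraph $G' \subset G$ with $\rk(\rho(G')) = 2$ satisfies the strict inequality, namely $m' \leq 2n' + 2 - 2c'$. Suppose for contradiction that $m' = 2n' + 3 - 2c'$ for some such $G'$. Let $G'' = G' \cap H$ on the same vertex set as $G'$ and let $c''$ denote its number of connected components. Since at most two edges of $G'$ lie outside $H$, we have $m'' \geq m' - 2 = 2n' + 1 - 2c'$. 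On the other hand, applying $(2,2)$-sparsity component-wise inside the Ross graph $H$ yields $m'' \leq 2n' - 2c''$, and since adding edges to $G''$ can only decrease the number of components, $c'' \geq c'$, whence $m'' \leq 2n' - 2c'$. Combining the two bounds gives $2n' + 1 - 2c' \leq 2n' - 2c'$, i.e. $1 \leq 0$, the desired contradiction.

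The main subtlety here is more a matter of bookkeeping than a genuine obstacle: one must be careful to pass from $G'$ to $G' \cap H$ on the \emph{same} vertex set (so that $n'$ is unchanged), to use the per-component form $m'' \leq 2n' - 2c''$ of $(2,2)$-sparsity rather than a single global bound, and to correctly track how the component count can only weakly decrease when edges are added. No deeper ingredient appears to be needed beyond \lemrefX{containsRoss} and the definitions.
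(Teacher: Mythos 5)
Your proof is correct and follows the same structure as the paper's: forward direction via \lemrefX{containsRoss}, reverse direction by observing that colored-Laman-sparsity can only fail the strict unit-area inequality at a rank-$2$ subgraph, which is then ruled out by $(2,2)$-sparsity of the Ross subgraph plus two extra edges. The only difference is that you carefully justify the bound $m' \leq 2n' + 2 - 2c'$ (by intersecting with $H$, applying per-component $(2,2)$-sparsity, and using $c'' \geq c'$), whereas the paper asserts it in one line.
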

\begin{proof}
The first implication is clear from \lemref{containsRoss}. Suppose $(G, \bgamma)$ satisfies the
latter condition. Since the graph is colored-Laman sparse and has $m=2n$ edges, the only way in which
it can fail to be unit-area-Laman is if there is a subgraph $G' \subset G$ with $\rk(\rho(G')) =2$ and
$m' = 2n' + 4 - 2c' -1 = 2n' + 3 - 2c'$. However, $G$ is a $(2,2)$-graph plus $2$ edges,
so for any subgraph $m' \leq 2n' + 2 - 2c'$.
\end{proof}
For algorithmic purposes, the following alternate characterization is more useful.
\begin{lemma} \lemlab{anotherchar}
A $\Z^2$-colored graph $(G, \bgamma)$ satisfies conditions (iii) and (iv)
of \theoref{dtwocombflfa} if and only if
$(G, \bgamma)$ is a Ross graph plus $2$ edges satisfying:
\begin{itemize}
\item[(a)] $\rho(G') = \Z^2$ for every $(2,2)$-circuit $G' \subset G$
\item[(b)] $\rho(G') \neq 0$ for every $(2,3)$-circuit $G' \subset G$
\end{itemize}
\end{lemma}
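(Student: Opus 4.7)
The plan is to reduce the equivalence to the structural lemmas already proved and then do a short case analysis based on subgraph excess. By \lemref{UALandRP2}, conditions (iii) and (iv) of \theoref{dtwocombflfa} are equivalent, so it suffices to show (iii) is equivalent to the characterization in \lemref{anotherchar}. First I would use \lemref{UALandRP2} to rewrite (iii) as the conjunction of three statements: $(G,\bgamma)$ is Ross-plus-$2$-edges; $(G,\bgamma)$ is colored-Laman-sparse; and $(G,\Psi(\bgamma))$ is $\Delta$-$(2,2)$-spanning for every epimorphism $\Psi : \Z^2 \to \Delta$ onto a finite cyclic group. The first of these matches the Ross-plus-$2$-edges hypothesis in \lemref{anotherchar}, so the remaining task is to show that under this hypothesis the other two are jointly equivalent to (a) and (b).

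Next I would dispose of the $\Delta$-$(2,2)$-spanning condition. Since a Ross-plus-$2$-edges graph has exactly $m = 2n$ edges, being $\Delta$-$(2,2)$-spanning coincides with the whole of $(G,\Psi(\bgamma))$ being $\Delta$-$(2,2)$ (containing no $\Delta$-$(2,2)$-circuit), and \lemref{altform} identifies this—as $\Psi$ ranges over all cyclic epimorphisms—with condition (a).

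The heart of the argument is to show that, assuming Ross-plus-$2$-edges together with (a), colored-Laman-sparseness is equivalent to (b). The forward direction is immediate: applying the colored-Laman inequality to a $(2,3)$-circuit $G'$ (which has $m' = 2n'-2$, $c' = 1$) forces $\rk(\rho(G')) \geq 1$, i.e., $\rho(G') \neq 0$. For the reverse, I would introduce the excess $e(G') := m' - 2n' + 2c'$ of an arbitrary subgraph; the colored-Laman inequality then rearranges to $2\rk(\rho(G')) \geq e(G') + 1$. The Ross-plus-$2$-edges structure forces $e(G') \leq 2$ on every subgraph—the Ross part contributes $m'_R \leq 2n' - 2c'_R$ by componentwise $(2,2)$-sparsity, and the two extra edges increase $m' - 2c'$ by at most $2$. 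Then: for $e(G') \leq -1$ the inequality is trivial; for $e(G') \in \{0,1\}$, a counting argument produces a $(2,3)$-circuit inside some non-isolated component of $G'$ (otherwise each such component obeys $m_i \leq 2n_i - 3$, giving $e(G') < 0$), and (b) yields $\rho(G') \neq 0$; for $e(G') = 2$, the analogous count produces a $(2,2)$-circuit, and (a) then gives $\rho(G') = \Z^2$, hence $\rk(\rho(G')) = 2$.

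The main obstacle will be the bookkeeping for the excess bound and the circuit-extraction in the case $e(G') \geq 0$: one has to carefully track how the two extra edges are distributed among the components of the Ross part of $G'$, combining $c'(G') \geq c'_R - 2$ with $m'_R \leq 2n' - 2c'_R$ to pin down $e(G') \leq 2$, and handle isolated-vertex components so that the componentwise $(2,3)$- and $(2,2)$-sparsity bounds apply cleanly. Once these structural counts are in place, the equivalence with (b) (and the appeal to (a) in the top case) falls out of the rearranged colored-Laman inequality.
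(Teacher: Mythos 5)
Your proof is correct, and its structure closely parallels the paper's: the forward direction (via \lemref{altform} and the $(2,3)$-circuit consequence of colored-Laman sparsity) is identical, and the key observations in the reverse direction — that the Ross-plus-$2$ structure caps subgraph density, and that a sufficiently dense component forces a $(2,3)$- or $(2,2)$-circuit that is controlled by (b) or (a) — are the same. Where you differ is the organization of the reverse direction: the paper argues by contradiction, positing a hypothetical colored-Laman circuit $G'$ and splitting into three cases on $r = \rk(\rho(G'))$, leaning on the minimality of circuits to conclude that every component inherits the rank and density (e.g., that each component of a rank-$1$ circuit has at least $2n''-1$ edges). You instead verify the inequality directly for every edge-induced subgraph, first establishing the uniform excess bound $e(G') = m' - 2n' + 2c' \leq 2$ from the Ross-plus-$2$ decomposition, then doing a case analysis on $e(G')$ rather than $r$. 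Your route trades the structural facts about colored-Laman circuits (which the paper uses but does not spell out in full) for a bit more explicit componentwise bookkeeping; the result is a more self-contained, if slightly longer, verification. Both are valid; yours has the small advantage of never needing to reason about what a minimal colored-Laman violation looks like.
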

\begin{proof}
Assume that $(G,\bgamma)$ satisfies (iii) and (iv) from \theoref{dtwocombflfa}.
By \lemref{altform}, condition (a) holds. Condition (b) holds because $G$ is colored-Laman-sparse.

Now assume that (a) and (b) hold.
From condition (a), it is obvious that $(G, \psi(\bgamma))$ is
$\Delta$-$(2,2)$ for every surjective representation $\psi: \Z^2 \to \Delta$. What is left to do, by
\lemref{UALandRP2}, is show that $G$ is colored-Laman-sparse. For a contradiction, we assume that
there is a colored-Laman circuit $G'$ in $G$.  Let $n'$ and $m'$ be the number of vertices and
edges in $G'$, $c'$ be the number of connected components
and $r = \rk(\rho(G'))$.  Since $G'$ is a colored-Laman circuit, we have $m' = 2n' +2r - 2c'$.

Now we analyze each possible value of $r$.  Condition (b) rules out $r = 0$, since
minimality of circuits forces $G'$ to be connected, and thus a $(2,3)$-circuit with trivial $\rho$-image.  This
would contradict (b).

If $r=1$, then each connected component $G''$ of $G'$ has $\rk(\rho(G'')) = 1$ by minimality of
circuits.  This means that if $G''$ has $n''$ vertices, it has at least $2n''-1$ edges and
thus contains a $(2,2)$-circuit $H$.  According to (a) $H$ has $\rho$-image all of $\Z^2$
which is impossible if $r = 1$.

Finally, for $r=2$, $m' = 2n' + 4 - 2c'$.  Because $G$ is a Ross graph plus $2$ edges,
$G'$ spans at most $2n' + 2 - 2c'$ edges, which is again a contradiction.
\end{proof}

\subsection{Proof of \theoref{dtwocombflfa}}
\lemref{UALandRP2} implies (iii) and (iv) are equivalent, and clearly (ii) implies (i). We will show (iv) $\Rightarrow$ (ii) and (i) $\Rightarrow$ (iii).

\paragraph{(iv) $\Rightarrow$ (ii):} We need to show that $pr_{\bomega}(\hat S)$ has the maximal possible rank for all $\bomega$
for some $(\vec p, \vec L)$.
Since $(G, \bgamma)$ is colored-Laman-sparse, we can, as before, choose some $\vec p, \vec L$ so that the edge vectors $\vec d_{ij}$ are generic.
The same argument for the flexible lattice case implies that $pr_{\bomega}(\hat S)$ is full rank for $\bomega \neq \vec 1$.

By \corref{fixvolmatrix}, it suffices to show that the system
defined by $S$ and $\tr(\vec L^{-1}\vec M ) = 0$ (viewing  $\vec L$ as
a $2 \times 2$ matrix) has rank $2n+1$. This follows from \cite[Theorem 4]{MT14}.

\paragraph{(i) $\Rightarrow$ (iii)/(iv)}
Suppose that $(G, \bgamma)$ is infinitesimally f.l. ultrarigid for some generic placement $\tilde{G}(\vec p, \vec L)$. Since there are exactly $m = 2n$ edges, by \lemref{linrepgamd},
$(G, \psi(\bgamma))$ is $\Delta$-$(2,2)$ for every finite cyclic $\Delta$ and epimorphism $\psi: \Z^2 \to \Delta$.
By \lemref{altform}, $\rho(G') = \Z^2$ for every $(2,2)$ circuit $G' \subset G$.
It also follows from
\cite{R11} or \cite[Proposition 4]{MT13} that $(G, \bgamma)$ must be Ross-spanning.

By \lemref{anotherchar}, it remains only to prove that $\rho(G') \neq 0$ for $(2,3)$ circuits. Suppose not, so $\rho(G') = 0$ for
some $(2,3)$ circuit. We will find a contradiction to the maximality of the rank of $pr_{\bomega}(\hat S)$.
We can perform valid color changes so that the edge colors on a spanning tree are $0$, and
since $\rho(G') = 0$, the colors of the other edges become $0$ as well. This does not change the rank of $pr_{\bomega}(\hat S)$, yet in an uncolored
graph $pr_{\bomega}(\hat S_{G', \vec d}) = pr_{\vec 1}(\hat S_{G', \vec d})$. Moreover, since the edges are uncolored, the edge vectors $\vec d_{ij}$
are precisely $\vec p_j(0) - \vec p_i(0)$. If we set $\hat{ \vec p}_i = \vec p_i(0)$ for all $i \in V(G)$, then $pr_{\vec 1}(\hat S_{G', \vec d})$ is precisely
the rigidity matrix for the finite framework $G'(\hat{\vec p})$. Since $G'$ is not $(2,3)$-sparse, there is a dependency by Laman's theorem. \\

\eop

\subsection{Fixed-lattice ultrarigidity for arbitrary nonsingular lattices}

\begin{cor} \corlab{affinechange}
Let $\tilde{G}(\vec p,\vec L)$ be a $2$-dimensional periodic framework where $\vec p$ is generic and $\vec L$ is any arbitrary
nonsingular matrix. Moreover, assume the associated colored graph $(G, \bgamma)$ has $n$ vertices and $m = 2n$ edges.
Then, $\tilde{G}(\vec p,\vec L)$ is infinitesimally f.l. ultrarigid if and only if
$(G, \bgamma)$ satisfies condition (iii) or (iv) of \theoref{dtwocombflfa}.
\end{cor}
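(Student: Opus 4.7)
The plan is to reduce the corollary to \theoref{dtwocombflfa} using the affine invariance of fixed-lattice ultrarigidity. Let $U\subset (\R^2)^n\times \R^{2\times 2}$ denote the set of pairs $(\vec p,\vec L)$ for which $\tilde{G}(\vec p,\vec L)$ is infinitesimally f.l.\ ultrarigid. By \corref{fixlatmatrix}, membership in $U$ is the assertion that $pr_{\bomega}(\hat S_{G,\vec d})$ has rank $dn$ for every torsion point $\bomega\neq\vec 1$ and rank $dn-d$ at $\bomega=\vec 1$; by \corref{esttwo} it suffices to verify this for $\bomega$ of order up to an explicit bound. Hence $U$ is cut out by finitely many polynomial minor conditions in the entries of $(\vec p,\vec L)$ and is therefore Zariski-open. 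The affine-invariance remark following \corref{fixvolmatrix} says further that $U$ is stable under the diagonal action $(\vec p,\vec L)\mapsto (A\vec p,A\vec L)$ of $A\in \GL_2(\R)$.

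The central step is to show that for every nonsingular $\vec L_1$, the fiber $U_{\vec L_1} := \{\vec p\in (\R^2)^n : (\vec p,\vec L_1)\in U\}$ is nonempty if and only if $U$ is. One direction is immediate. For the converse, since the nonsingular matrices form a nonempty Zariski-open subset of $\R^{2\times 2}$, we may pick $(\vec p_0,\vec L_0)\in U$ with $\vec L_0$ nonsingular. Setting $A := \vec L_1 \vec L_0^{-1}\in \GL_2(\R)$, affine invariance yields $(A\vec p_0,\vec L_1)\in U$, so $U_{\vec L_1}\neq \emptyset$. As a nonempty Zariski-open subset of $(\R^2)^n$, the fiber $U_{\vec L_1}$ is then Zariski-dense.

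To conclude, \theoref{dtwocombflfa} gives that $(G,\bgamma)$ satisfies (iii) (equivalently (iv)) if and only if generic pairs over $\Q$ lie in $U$, which, since $U$ is Zariski-open and its defining polynomials have coefficients in an algebraic extension of $\Q$, is equivalent to $U\neq \emptyset$. Combining with the previous step, $(G,\bgamma)$ satisfies (iii)/(iv) iff $U_{\vec L}\neq \emptyset$ for our given nonsingular $\vec L$, iff $\tilde{G}(\vec p,\vec L)$ is f.l.\ ultrarigid for every $\vec p$ in this Zariski-open dense subset of $(\R^2)^n$. The main subtlety will be clarifying that ``generic $\vec p$'' in the corollary must be read relative to $\vec L$---as Zariski-genericity in $(\R^2)^n$, or equivalently as algebraic independence over $\Q(\vec L)$---rather than over $\Q$ alone, since the excluded subvariety depends on $\vec L$ through the affine change of coordinates witnessing invariance.
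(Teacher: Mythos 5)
Your argument has a gap at the claim that $U$ (the set of infinitesimally f.l.\ ultrarigid pairs $(\vec p,\vec L)$) is Zariski-open. This does not follow from \corref{esttwo}: the threshold $N_0$ there depends on $[K\cap\Q_{ab}:\Q]$, where $K$ is the coefficient field of the minors, and $K$ varies with $(\vec p,\vec L)$. So \corref{esttwo} gives a finite set of torsion points to check \emph{for each fixed pair}, but the bound is not uniform, and $U$ is not exhibited as the complement of a proper subvariety. In fact the paper notes in \secref{conc} that the set of infinitesimally f.l.\ ultrarigid frameworks need not be open at all---in the Connelly--Shen--Smith one-parameter family cited there, both infinitesimally ultrarigid and ultraflexible configurations are dense in the parameter---so openness (Zariski or Euclidean) is a genuine, not merely technical, obstruction. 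Since you use Zariski-openness both to conclude that the fiber $U_{\vec L_1}$ contains all generic $\vec p$ and to equate ``$U\neq\emptyset$'' with ``generic pairs lie in $U$,'' the gap propagates through the rest of the argument.

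The paper circumvents this by arguing at a generic point rather than through a topological statement about $U$. Pick $(\vec p',\vec L')$ algebraically independent over $\Q(\vec L)$; \theoref{dtwocombflfa} gives $(\vec p',\vec L')\in U$; set $A=\vec L(\vec L')^{-1}$; affine invariance gives $(A\vec p',\vec L)\in U$. Since $A\in\GL_2\bigl(\Q(\vec L,\vec L')\bigr)$, the coordinates of $\vec p_0:=A\vec p'$ remain algebraically independent over $\Q(\vec L)$. Now for any torsion point $\bomega$, the relevant minors are polynomials in $\vec p$ with coefficients in $\Q(\vec L,\bomega)$, an algebraic extension of $\Q(\vec L)$; any $\vec p$ algebraically independent over $\Q(\vec L)$ is also algebraically independent over $\Q(\vec L,\bomega)$ and therefore realizes the same rank at $\bomega$ that $\vec p_0$ does. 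This holds simultaneously for all torsion points, so every such generic $\vec p$ lies in the fiber over $\vec L$. Your closing remark correctly identifies the notion of genericity that should be used, but the transfer mechanism you substitute for it---Zariski-openness of $U$---is precisely the step that fails.
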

\begin{proof}
Assume the latter and fix some $\vec L$. By \theoref{dtwocombflfa}, any generic $\tilde{G}(\vec p', \vec L')$ is infinitesimally f.l. ultrarigid.
However, infinitesimal f.l. ultrarigidity is invariant under affine transformations, so using a suitable transformation we find $\tilde{G}(\vec p, \vec L)$
is infinitesimally f.l. ultrarigid for some $\vec p$. This implies $\tilde{G}(\vec p, \vec L)$ is infinitesimally f.l. ultrarigid for any generic $\vec p$ as well.

If we assume the former holds, then moreover generic $\tilde{G}(\vec p', \vec L')$ are infinitesimally f.l. ultrarigid and so we are done by \theoref{dtwocombflfa}.
\end{proof}

\subsection{Combinatorial algorithms for generic rigidity}
\theoref{dtwocomb} and \theoref{dtwocombflfa} provide combinatorial conditions for infinitesimal ultrarigidity in the case of the minimum possible number of edge
orbits. In this section, we discuss algorithms for checking these conditions. \theoref{dtwocomb} and algorithms from \cite{MT13, MT13a} guarantee
that there is some finite time algorithm in the fully flexible case. We will see that \corref{esttwo} implies the algorithm runs in time polynomial in $m$
and sizes of the edge colors (and so is technically exponential time). In the fixed-lattice/fixed-area case, we will see that a truly polynomial time
algorithm is possible. We begin with a quick exposition of an algebraic algorithm on vectors in $\Z^2$.

\subsubsection{Algorithm for determining the index of $\Z^2$ subgroups} \seclab{eucalg}
We discuss an algorithm which solves the following problem. Given $m$ vectors in $\Z^2$, determine the index of the subgroup they generate. First,
we explain the case $m = 2$. If we have $\lambda_1, \lambda_2 \in \Z^2$, then we can add an integer multiple of one to the other without
affecting the subgroup that is generated. So if $\lambda_1 = (a, b), \lambda_2 = (c, d)$, we can do such operations (following the Euclidean algorithm) to obtain
two vectors $\lambda_1' = (a', b'), \lambda_2' = (0, d')$ where $a' = \gcd(a, c)$. The index is then $a' d'$ which is the determinant of the
$2 \times 2$ with rows $\lambda_1', \lambda_2'$. Note that $d'$ is no larger than $\max(a, b, c, d)^2$.
(We could, of course, just take the determinant at the beginning, but we will use this as a subroutine.)
Note that the Euclidean algorithm runs in time $O(\log^2 \min(a, c) \log \log \min(a, c))$, so that is the running time here as well.\\

\noindent \textbf{Steps in the algorithm for general $m$}

\noindent Suppose the original vectors are $\lambda_1, \dots, \lambda_m$. \\

\noindent \textbf{I} In order from $i = 2$ to $m$, replace $\lambda_1, \lambda_i$ with the vectors obtained from the procedure described above
so that $\lambda_i$ has first coordinate $0$.\\

\noindent \textbf{II} Now, the vectors $\lambda_2, \dots, \lambda_m$ are essentially integers so run the Euclidean algorithm to get $\lambda_2 = (0, t)$
and $\lambda_i = 0$ for $i > 3$.\\

\noindent \textbf{III} Compute the determinant of the matrix with rows given by the new $\lambda_1$ and $\lambda_2$. This is the index.

\paragraph{Correctness:} Each step does not change the subgroup generated by the $\lambda_i$, so the correctness is clear.

\paragraph{Running time:} Let $D$ be the maximum size of a coordinate in any $\lambda_i$ (at the beginning). Step I takes time at most $O(m \log^2 D \log \log D)$. After
the completion of step I, the nonzero coordinate in $\lambda_2$ has size no larger than $D^2$. Thus, step II takes time at most
$O(m \log^2 D^2 \log \log D^2) = O(m \log^2 D \log \log D)$.

\subsubsection{Combinatorial algorithm for fixed area/fixed lattice} \seclab{combalg}
We begin with a polynomial time algorithm for testing the combinatorial condition (iii) and (iv) of \theoref{dtwocombflfa}. As we will see, the correctness depends on a third characterization of (iii) and (iv).\\

\noindent \textbf{Steps in the algorithm} \\

\noindent \textbf{I} Check if $m = 2n$. Extract a spanning Ross subgraph $R$ if possible and stop if it is not.
This can be done with the algorithm from \cite{BHMT11}.\\

\noindent \textbf{II} For every pair of edges $ij, i'j' \in E(G)$, do the following for $G' = G  - \{ij, i'j'\}$:

\begin{itemize}
\item[(a)] Determine if $G'$ is a $(2,2)$-graph with the pebble game algorithm \cite{LS08}.
If it is not a $(2,2)$-graph, continue to the next pair of edges. Otherwise go to step II.b.
\item[(b)] Determine if $G'$ is a Ross graph. If it is, continue to II.c, and otherwise stop.
\item[(c)] For each of $ij, i'j'$ compute the $(2,2)$-circuit $C_{ij}, C_{i'j'}$
in $G' + ij, G'+i'j'$ respectively (again
using the pebble game \cite{LS08}). Check if $\rho(C_{ij}) = \Z^2 = \rho(C_{i'j'})$. If they are not all equal,
stop and otherwise continue to the next
pair of edges.

One way to check if $\rho(C_{ij}) = \Z^2$ is as follows. First, find a spanning tree $T \subset C_{ij}$
and fundamental cycles $B_1, \dots, B_k$.
Choose some base vertex $a_0 \in V(T)$, and for the unique path $P_{a_0 a}$ in $T$ from $a_0$ to a vertex
$a \in V(T)$, compute $\rho(P_{a_0 a})$, i.e.
the sum of edge colors on edges in the path. Then,
$\rho(B_\ell) = \rho(P_{a_0 i_\ell}) + \gamma_{i_\ell j_\ell} - \rho(P_{a_0 j_\ell})$ where
$B_\ell$ is the fundamental cycle for edge $i_\ell j_\ell \in E(C_{ij}) - E(T)$. Apply the algorithm
from \secref{eucalg} to the collection
$\rho(B_1), \dots, \rho(B_k)$. If the index is $1$, continue and otherwise stop.
\end{itemize}

\noindent \textbf{III} If the algorithm proceeded through all previous steps without stopping,
then the framework satisfies conditions (iii)/(iv) and otherwise not.

\paragraph{Correctness:}
We check that the algorithm verifies the conditions of \lemref{anotherchar}.
Step I verifies the graph is Ross plus $2$ edges.
It remains to show that conditions
(a) and (b) from \lemref{anotherchar} are also checked.

We start with (b). We may assume the algorithm passed step I, and so we know $G$ has a Ross spanning
subgraph and thus a $(2,2)$
spanning subgraph. Thus any $(2,3)$ circuit $G' \subset G$ necessarily extends to some $(2,2)$ basis
$B$ which is $G$ minus two edges.
Consequently, at some point the algorithm will check if $B$ is a Ross graph (assuming (a) and (b) are not previously
violated) and if it is, that is a
certificate that $G' \subset B$ has
nonzero $\rho$-image. If $B$ is not Ross, then some violation of (b) occurs and the algorithm stops.

Now consider (a). Again assume step I has completed. Let $G'$ be a $(2,2)$ circuit. By similar reasoning
as for (b), $G' - ij$
is a $(2,2)$ graph and hence part of a $(2,2)$ basis $B$ which is $G$ minus two edges. Necessarily $ij \notin E(B)$, so
$G'$ is the unique $(2,2)$ circuit in $B + ij$, and so step II.c will check if $\rho(G') = \Z^2$ or not. \eop

\paragraph{Running Time:}
We set $D = \sum_{ij \in E(G)} \|\gamma_{ij} \|_1$. The running times of each step are as follows:

\noindent \textbf{I}  The algorithm of \cite{BHMT11} runs in time $O(m^2)$ .\\

\noindent \textbf{II.a} For each $G'$, this takes time $O(m^2)$. \\

\noindent \textbf{II.b} Like step I, this takes $O(m^2)$.\\

\noindent \textbf{II.c} Computing the circuits takes time $O(m^2)$. (In fact, if one continues with the pebble game algorithm from II.b, this
can be done even faster.) Finding the maximal tree and $\rho(B_\ell)$ for all $\ell$ takes time $O(m)$. Since $D$ is larger than any coordinate
in any $\rho(B_\ell)$, checking if $\rho(G') = \Z^2$ takes time $O(m \log^2 D \log \log D)$. \\

\noindent \textbf{Total:} Since there are $m^2$ such $G'$ in step II, we get a total running time of $O(m^4 + m^3  \log^2 D \log \log D)$.

\subsubsection{Combinatorial algorithm for generic rigidity for flexible lattice}
In the case of the fully flexible lattice, we only know an algorithm which is polynomial in $m$ but only
polynomial in $D = \sum_{ij \in E(G)} \|\gamma_{ij} \|_1$,
not polylogarithmic as in the previous case. The main reason for this is that we know of no appropriate analogue to \lemref{altform} when $m = 2n+1$.
In this case, we will only verify that the algorithm is polynomial in $m, D$ and not give exact exponents.\\

\noindent \textbf{Steps in algorithm:} \\

\noindent \textbf{I} First, we verify the graph is colored-Laman via the algorithm as described in \cite{MT13}. \\

\noindent \textbf{II} Compute $\hat C = C_2 D$ where $C_2$ is the constant from \secref{tpiv}, and compute
$N_0 = \max(8500, (\hat C \log \hat C)^2)$. For every $N < N_0$ and surjective homomorphism $\Psi: \Z^2 \to \Z/N\Z$,
do the following.

\begin{itemize}
\item[(a)] Compute the $\Z/N\Z$-colored graph $(G, \psi(\bgamma))$ where colors are represented
by an integer in $0, \dots, N-1$.
\item[(b)] For each edge $ij$, test whether $(G - ij, \psi(\bgamma))$ is $\Z/N\Z$-$(2,2)$ using the
algorithm from \cite{MT13a} (where
such graphs are called ``cone-$(2,2)$''). If $(G - ij, \psi(\bgamma))$ is not $\Z/N\Z$-$(2,2)$ for
all $ij$, then stop and otherwise continue.
\end{itemize}

\noindent \textbf{III} If the algorithm never stopped at II.b, then the graph is generically rigid and otherwise not.

\paragraph{Correctness:} This follows directly from \corref{effdtwocomb}.

\paragraph{Running Time:}
Each of the algorithms cited from \cite{MT13} and \cite{MT13a} run in polynomial time in $m$. The number of $\Psi$ to check in step II is polynomial in $D$, so the total running time is polynomial in $m$ and $D$.

\section{Closing Remarks} \seclab{conc}

\subsection{Infinitesimal ultraflexibility versus ultraflexibility}
Just as with most contexts, infinitesimal (ultra)rigidity implies (ultra)rigidity.
Specifically, a framework which is infinitesimally ultrarigid will have
only trivial $\Lambda$-respecting rigid motions for all finite index $\Lambda < \Z^d$.
On the other hand, it does not follow obviously that if a generic framework
is infinitesimally ultraflexible, then it must necessarily have some finite $\Lambda$-respecting flex.
Even if it is generic from the viewpoint of
$\Z^2$-periodicity,  from the viewpoint of $\Lambda$-periodicity the framework is especially symmetric.
Indeed, there are colored graphs such that
all its generic realizations are infinitesimally f.l. infinitesimally ultraflexible and f.l. ultrarigid.
\figref{ufbr} shows two colored graphs that are generically infinitesimally f.l. infinitesimally ultraflexible but still
generically f.l. ultrarigid. In the case of the fixed-area and fully flexible lattice,
it is still an open question.
\begin{figure}[htbp]
\centering
\subfigure[]{\includegraphics[width=0.35\textwidth]{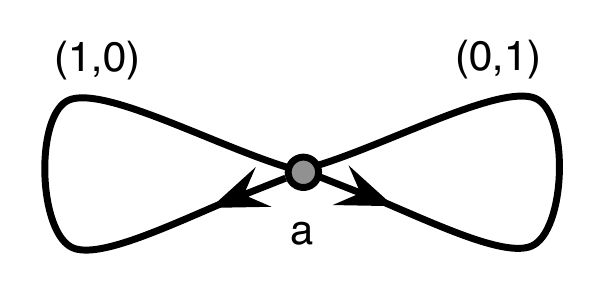}}
\subfigure[]{\includegraphics[width=0.35\textwidth]{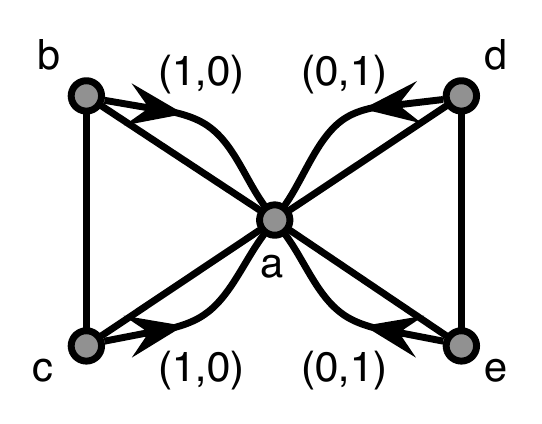}}
\caption{		}
\label{fig:ufbr}
\end{figure}
\begin{prop} \proplab{ultraflexbutrigid}
Let $(G, \bgamma)$ be as in Figure \ref{fig:ufbr}.(a) or Figure \ref{fig:ufbr}.(b). Any generic realization $G(\vec p, \vec L)$ is infinitesimally f.l. ultraflexible and f.l. ultrarigid.
\end{prop}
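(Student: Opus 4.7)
The plan is to verify the two properties separately for each of the graphs in \figref{ufbr}. The infinitesimal f.l.\ ultraflexibility is a rank computation, while the f.l.\ ultrarigidity requires a higher-order rigidity analysis.

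\textbf{Infinitesimal f.l.\ ultraflexibility.} By \corref{fixlatmatrix}, it suffices to exhibit a torsion point $\bomega \ne \vec 1$ at which $pr_{\bomega}(\hat S_{G,\vec d})$ has $\C$-rank strictly less than $dn$. For each graph in \figref{ufbr}, I would write out $\hat S_{G,\vec d}$ explicitly for a generic realization and inspect small-order torsion points, starting with the $2$-torsion points $\bomega \in \{\pm 1\}^2 \setminus \{\vec 1\}$, which correspond to index-$2$ or index-$4$ sublattices $\Lambda < \Z^2$. The rank drop is anticipated to come from the combinatorial structure of the colored graph: one checks that $(G, \Psi(\bgamma))$ fails to be $\Delta$-$(2,2)$ spanning for the appropriate finite quotient $\Delta = \Z^2/\Lambda$, and then invokes \lemref{linrepgamd} to translate the combinatorial failure into an explicit non-trivial element in the kernel of $pr_{\bomega}(\hat S_{G,\vec d})$. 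Since this kernel element is constructed from the graph structure alone, it persists for all realizations in a Zariski-dense open subset, covering generic $(\vec p, \vec L)$.

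\textbf{f.l.\ ultrarigidity.} I must show that for every finite-index sublattice $\Lambda' < \Z^2$, the framework admits no non-trivial finite $\Lambda'$-periodic motion preserving $\vec L$. The sublattices $\Lambda'$ for which $(G, \Psi(\bgamma))$ satisfies the relevant $\Delta$-sparsity count are handled automatically: there, the framework is generically infinitesimally f.l.\ rigid for $\Lambda'$-periodicity, and infinitesimal rigidity implies rigidity. The delicate cases are the finitely many $\Lambda'$ witnessing the infinitesimal ultraflexibility. For these, I would employ second-order rigidity: for each non-trivial infinitesimal flex $\vec v$ I check that the second-order equations
\[ 2 \langle \vec d_{ij}, \vec a_j - \vec a_i \rangle + \| \vec v_j - \vec v_i \|^2 = 0 \qquad (ij \in E(G)) \]
are inconsistent in the second-order coefficients $\vec a$. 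Inconsistency for every non-trivial $\vec v$ yields second-order f.l.\ rigidity, which implies f.l.\ rigidity. For the small graphs in \figref{ufbr}, this comes down to a direct linear-algebraic computation on a system whose size is controlled by the few orbits of edges and the index of $\Lambda'$.

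\textbf{Main obstacle.} The hard part is showing that the infinitesimal flex does not integrate, i.e., the second-order obstruction. Two issues must be addressed. First, in principle one faces infinitely many sublattices $\Lambda'$, but in practice the relevant $\Lambda'$ correspond to torsion points in the vanishing locus of the minors of $\hat S_{G, \vec d}$, whose orders are bounded by the explicit constants of \secref{algo} (\corref{esttwo}); this reduces the verification to a finite, enumerable list. Second, the second-order system must be shown to be inconsistent generically, which is a Zariski-open condition verifiable by the non-vanishing of a specific polynomial in the coordinates of $(\vec p, \vec L)$; exhibiting a single realization for which the second-order system is inconsistent then suffices to conclude the same for a generic realization. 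For the two specific small graphs in \figref{ufbr}, this verification reduces to an explicit computation tractable by hand or symbolic computation.
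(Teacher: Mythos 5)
Your first half (infinitesimal f.l.~ultraflexibility) is correct and matches the paper's approach: find a quotient $\Psi:\Z^2\to\Z/2\Z$ for which $(G,\Psi(\bgamma))$ fails to be $\Z/2\Z$-$(2,2)$, then apply \theoref{dtwocombflfa} (equivalently, \corref{fixlatmatrix} via \lemref{linrepgamd}).

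The f.l.~ultrarigidity part has a genuine gap. You propose to enumerate ``the finitely many $\Lambda'$ witnessing the infinitesimal ultraflexibility'' and, for each, run a second-order rigidity computation, citing \corref{esttwo} to bound the list. But \corref{esttwo} does not say what you need: it says that if no non-trivial torsion point of order $\le N_0$ lies in $V(p_1,\dots,p_k)$, then no torsion point at all does. It gives no upper bound on the orders of torsion points that \emph{do} lie in the variety. For the graphs in \figref{ufbr} the rigidity matrix $\hat S$ is small (for graph (a), $n=1$ and both edges are loops, so $\hat S$ has rows of the form $\vec d_k\otimes(x^{-\gamma_k}-1)$ and $\det\hat S$ vanishes on the curve $V(x^{\gamma_1}-1)\cup V(x^{\gamma_2}-1)$). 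That curve contains \emph{infinitely many} torsion points of unbounded order, hence infinitely many sublattices $\Lambda'$ with non-trivial infinitesimal flexes, each of unbounded index. Your reduction to a finite list therefore fails outright, and with it the whole plan of a finite, per-$\Lambda'$ second-order verification.

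The paper sidesteps this entirely: it gives a direct geometric argument that handles all $\Lambda'$ simultaneously. Because each orbit $\vec p_a(\gamma)=\vec p_a(0)+\vec L(\gamma)$ lies on the lattice and consecutive lattice points in the $e_i$ directions are joined by bars (directly in case (a), via rigid subgraphs in case (b)), the chain of vertices from $\vec p_a(\gamma)$ to $\vec p_a(\gamma+t_ie_i)$ is ``pulled tight'': the fixed endpoints (enforced by $\Lambda'$-periodicity with fixed $\vec L$) together with fixed bar lengths force every intermediate segment to equal $\vec L(e_i)$, so the motion is trivial. This argument is uniform over all finite-index $\Lambda'<\Z^2$, which is precisely what the infinitude of bad sublattices demands. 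If you want to salvage a second-order approach, you would need an argument that likewise applies to all $\Lambda'$ at once rather than one $\Lambda'$ at a time; the pulled-tight observation is essentially the cleanest such argument for these examples.
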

\begin{proof}
We begin with Figure (a).
First note that for $\Psi(\gamma_1, \gamma_2) = \gamma_2 \; (\text{mod} 2)$, the graph $(G, \Psi(\bgamma))$ is not $\Z/2\Z$-$(2,2)$, and so
generic realizations must be infinitesimally ultraflexible by \theoref{dtwocombflfa}. We fix now some arbitrary $\Lambda < \Z^2$ and prove that there
are only trivial $\Lambda$-respecting motions.

Let $\tilde G(\vec p, \vec L)$ be the realization of the corresponding infinite graph $\tilde G$. Let $a$ be the unique vertex of $G$.
Then, $\vec p_a(\gamma) = \vec p_a(0) + \vec L(\gamma)$ for all $\gamma \in \Z^2$. Let $e_1, e_2$ be the standard basis vectors of $\Z^2$
and let $t_1, t_2$ be the smallest positive integers satisfying $t_i e_i \in \Lambda$.
Any $\Lambda$-respecting motion must necessarily preserve the difference $\vec p_a( t_i e_i + \gamma) - \vec p_a( \gamma) = \vec L(t_i e_i)$
for all $\gamma \in \Gamma$. Since
$$\vec p_a( t_i e_i + \gamma) - \vec p_a( \gamma) = \sum_{k=1}^{t_i} ( \vec p_a(k e_i + \gamma) - \vec p_a(  (k-1) e_i + \gamma) = \sum_{k=1}^{t_i} \vec L(e_i),$$
the sequence of vertices is ``pulled tight'' and so any motion must preserve the difference $\vec p_a( k e_i + \gamma) - \vec p_a( (k-1) e_i + \gamma) = \vec L(e_i)$ for $1 \leq k \leq t_i$.
This implies that the difference $\vec p_a(e_i + \gamma) - \vec p_a(\gamma)$ for any $\gamma, i$ is the constant vector $\vec L(e_i)$ under any motion, i.e. all motions are trivial.

\begin{figure}[htbp]
\centering
\includegraphics[width=0.6\textwidth]{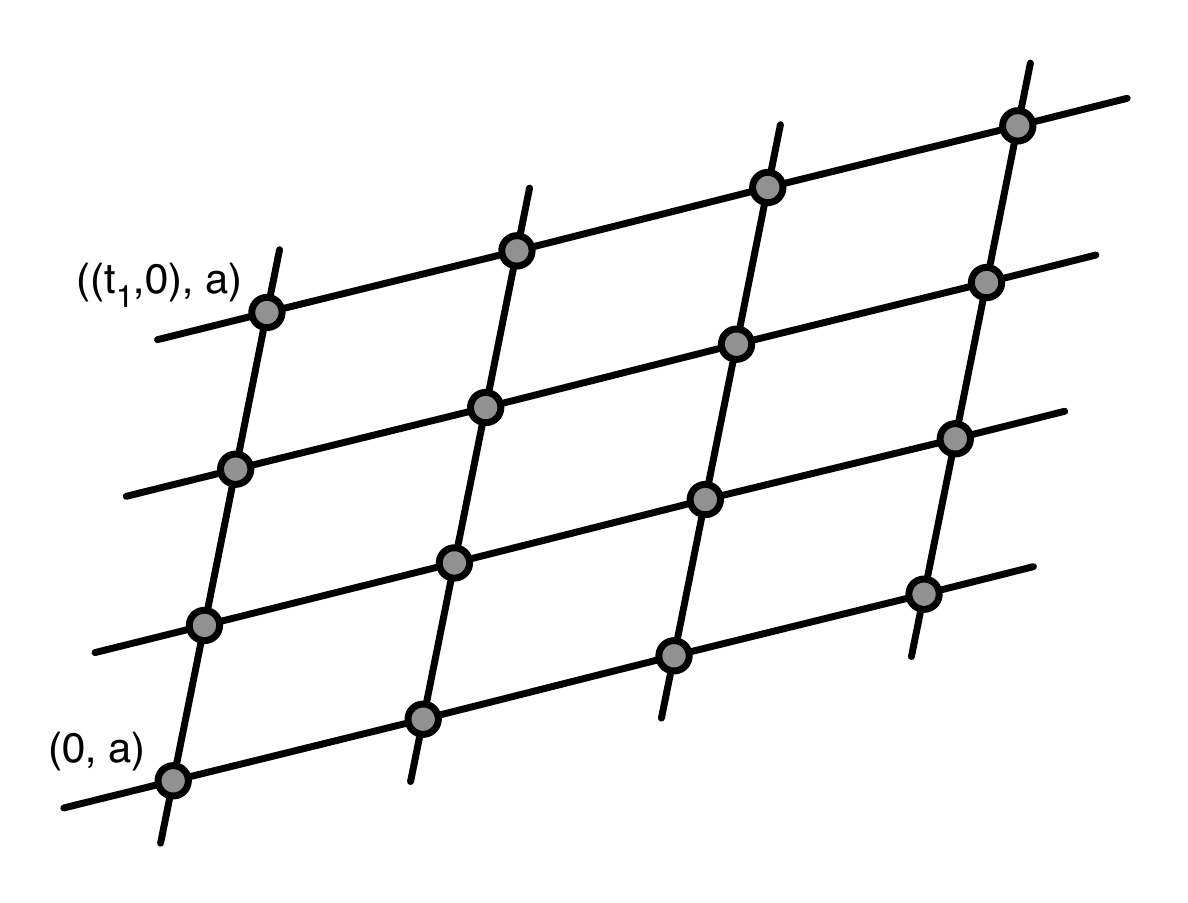}
\caption{Periodic realization of the graph in Figure \ref{fig:ufbr}.(a).}
\label{fig:pulltight}
\end{figure}

Now, let $(G, \bgamma)$ be the graph in figure (b). Let $\Psi$ be as above. Then,
$(G, \Psi(\bgamma))$ is not $\Delta$-$(2,2)$ since the graph spanned by vertices $a, b, c$ is $(2,1)$-tight but of trivial $\Delta$ color.
By \theoref{dtwocombflfa}, generic realizations are infinitesimally ultraflexible. However,
as Figure \ref{fig:ufbrreal} shows, the vertex $(a, \gamma)$ is connected to $(a, \gamma \pm e_i)$ for $i = 1,2$ by rigid graphs. Thus, as
in the previous example, regardless of $\Lambda$, the orbit of $(a, \gamma)$ is pulled tight and via similar arguments the framework is rigid.
\end{proof}

\begin{figure}[htbp]
\centering
\includegraphics[width=0.8\textwidth]{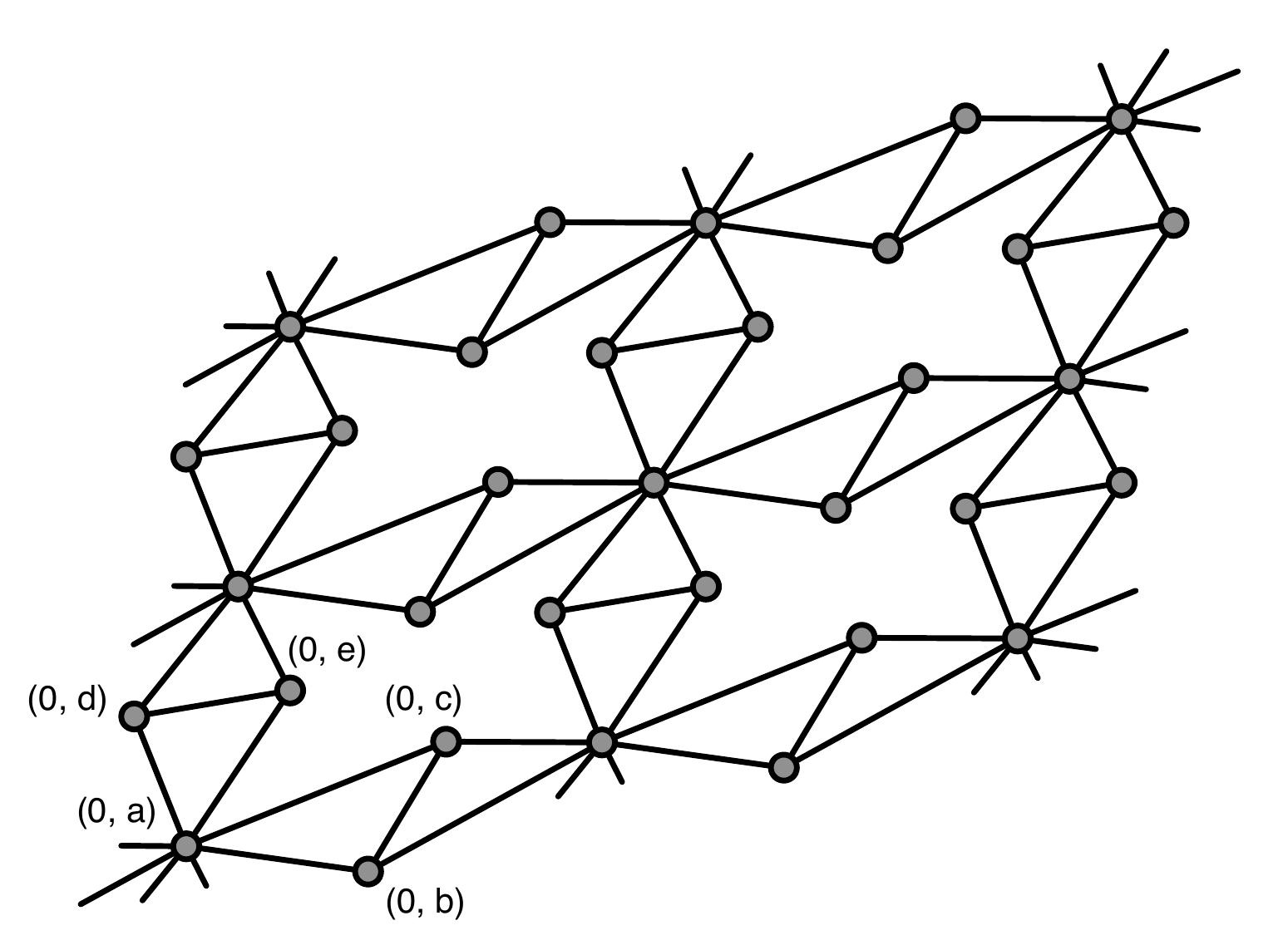}
\caption{Periodic realization of the graph in Figure \ref{fig:ufbr}.(b).}
\label{fig:ufbrreal}
\end{figure}

In light of the above examples, we ask the following.

\begin{problem} Characterize those graphs for which infinitesimal ultraflexibility implies ultraflexibility.
\end{problem}

\subsection{Some open questions:}
For many situations, infinitesimal rigidity is preserved under any sufficiently small deformation of a framework $G(\vec p)$
(not necessarily preserving lengths). The reason the property holds is that infinitesimal rigidity holds outside some
proper algebraic subvariety. However, the set of infinitesimally ultrarigid frameworks is, a priori, the complement of
\emph{infinitely} many subvarieties (one for each torsion point), and so it is unclear that the set is open.

\begin{question} For a given periodic graph, is the space of infinitesimally ultrarigid frameworks open? Does it contain any open sets?
\end{question}

The paper \cite{BS15} provides some evidence that the answer to the latter question is yes. In \cite{BS15}, it is shown that
periodic pointed pseudo-triangulations are f.a. infinitesimally ultrarigid and adding a single edge orbit produces an infinitesimally ultrarigid framework.
Since the property of being a periodic pointed pseudo-triangulation is preserved under small perturbations, this produces
open sets of ultrarigid frameworks.

On the other hand, in the context of fixed lattice ultrarigidity, Connelly--Shen--Smith have produced a
continuous $1$-parameter family of frameworks where both the infinitesimally ultrarigid and ultraflexible
frameworks are dense in the set of parameters. (See Theorem 9.1 of \cite{CSS14}. A more thorough
description of the family is available in the corresponding appendix.) In this context then,
the answer to the former question is, in general, negative. Moreover, it seems likely that this
example can be modified to apply to the fully flexible context. Thus, one preliminary project might
be to find a periodic graph where the infinitesimally ultrarigid realizations constitute an open set, if
indeed such a periodic graph exists.

The results of \cite{BS15}, \cite{Haasetal2005} and this paper lead to another natural question. In \cite{Haasetal2005},
it is shown that a planar Laman graph necessarily has a realization as a pointed pseudo-triangulation.
As was shown in \cite{BS15}, $m = 2n$ for a periodic pointed pseudo-triangulation, and so the frameworks must satisfy
the conditions of \theoref{dtwocombflfa}.

\begin{question} If a colored graph satisfies the conditions of \theoref{dtwocombflfa} and admits a planar periodic realization,
does it admit a realization as a periodic pointed pseudo-triangulation?
\end{question}

Our combinatorial theorems \theorefX{dtwocomb} and \theorefX{dtwocombflfa} characterize
generic infinitesimal ultrarigidity when the number of edges is the minimal possible. However,
infinitesimal ultrarigidity is not obviously matroidal (and almost certainly not) on colored graphs. Moreover,
for each torsion point $1 \neq \bomega \in \C^2$, we only understand generically the rank of $\pr_{\bomega}(\hat S_{G, \vec p, \vec L})$
when we assume additional combinatorial information about $(G, \bgamma)$, i.e. that it is colored-Laman-sparse.
Therefore, the following closely related problems remain open:

\begin{problem} In dimension $2$ (or higher),
give a complete combinatorial characterization of the linear matroid given by the generic rank of $\pr_{\bomega}(\hat S_{G, \vec p, \vec L})$.
\end{problem}

\begin{problem} Characterize, without any assumption on the number of edges, the generically infinitesimally ultrarigid graphs.
\end{problem}

\bibliographystyle{abbrvnat}

\end{document}